\documentclass[final]{siamltex}
\usepackage{amsmath,amsfonts,amssymb}
\usepackage{latexsym}
\usepackage{epsfig,overpic}
\usepackage{subfigure}
\usepackage{enumitem}
\usepackage{color}
\usepackage{hyperref}
\usepackage[normalem]{ulem}
\usepackage{showkeys}
\usepackage{showlabels}

\newcommand{\R}{\mathbb R}

\newcommand{\M}{\mathcal M}

\newcommand{\cH}{\mathcal H}
\newcommand{\cI}{\mathcal I}

\newcommand{\cM}{\mathcal M}
\newcommand{\cN}{\mathcal N}

\newcommand{\cG}{\mathcal G}
\newcommand{\cE}{\mathcal E}

\newcommand{\ssc}{{\sf c}}

\newcommand{\sC}{{\sf C}}

\newcommand{\fsas}{f_{\rm sas}}

\newcommand{\intset}{\mathrm{int}}
\newcommand{\bset}{\mathrm{b}}

\newcommand{\cf}{\gamma_{\mathrm{f}}}
\newcommand{\Nmax}{N_{\max}}
\newcommand{\Nzero}{N_0}
\newcommand{\gint}{\gamma_{\intset}}
\newcommand{\gb}{\gamma_{\bset}}
\newcommand{\ga}{\gamma_\alpha}

\newcommand{\Oie}{\Omega_i^{\rm e}}
\newcommand{\Gie}{\Gamma_i^{\rm e}}

\newcommand{\rp}{r_{\mathrm{p}}}

\DeclareMathOperator*{\argmin}{arg\,min}

\DeclareGraphicsExtensions{.pdf,.eps,.ps,.eps.gz,.ps.gz,.eps.Y}



\newtheorem{assumption}{Assumption}[section]
\newtheorem{indicator}{Indicator}[section]

\newtheorem{remark}{Remark}[section]



\newcommand{\rev}[1]{#1}

\newcommand{\black}{\color{black}}
\newcommand{\blue}{\color{blue}}

\begin{document}
\title{Analysis of the Schwarz domain decomposition method for the conductor-like screening continuum model}
\author{Arnold Reusken \and Benjamin Stamm}
\maketitle

\begin{abstract}
We study the Schwarz overlapping domain decomposition method applied to the Poisson problem on a special family of domains, which by construction consist of a union of a large number of fixed-size subdomains. These domains are motivated by applications in computational chemistry where the subdomains consist of van der Waals balls. 
As is usual in the theory of domain decomposition methods, the rate of convergence of the Schwarz method is related to a stable subspace decomposition. We derive such a stable decomposition for this family of domains and analyze how the stability ``constant'' depends on relevant geometric properties of the domain. For this, we introduce new descriptors that are used to formalize the geometry for the family of domains. We show how, for an increasing  number of subdomains, the rate of convergence of the Schwarz method depends on specific local geometry descriptors and on one global geometry descriptor.  
The analysis also naturally provides lower bounds in terms of the descriptors for the smallest eigenvalue of the Laplace eigenvalue problem for this family of domains.
\end{abstract}

\section{Introduction}
In this article, we analyze scaling properties of the Schwarz overlapping domain decomposition method for the Poisson problem: find $u\in H_0^1(\Omega^M)$ such that
\[
	-\Delta u = f, \qquad\mbox{in } H^{-1}(\Omega^M):=\big(H_0^1(\Omega^M)\big)'.
\]
Here $\Omega^M \subset \Bbb{R}^3$ is a bounded  Lipschitz domain and $H_0^1(\Omega^M)$ denotes the usual Sobolev space of functions with weak derivatives in $L^2(\Omega^M)$ with vanishing Dirichlet-trace.
We investigate the behavior of the Schwarz iterative method when $\Omega^M$ consists of a increasing number $M=2,3,\ldots$ of {\it fixed-size} overlapping subdomains $\{\Omega_i\}_{i=1}^M$. We are particularly interested in the case that the subdomains $\Omega_i$ are overlapping balls with comparable radii. 

The motivation for studying this problem comes from numerical simulations in computational chemistry. Recently, a domain decomposition method has been proposed~   \cite{ddCOSMO1,ddCOSMO3,ddCOSMO4,ddCOSMO2,ddCOSMO5} in the context of so-called implicit solvation models, more precisely for the COnductor-like Screening MOdel (COSMO)~\cite{klamt1993cosmo} which is a particular type of  continuum solvation model (CSM).
In a nutshell, such models account for the mutual polarization between a solvent, described by an infinite continuum, and a charge distribution of a given solute molecule of interest. It therefore takes the long-range polarization response of the environment (solvent) into account. We refer to the review articles~\cite{lipparini2016perspective,tomasi2005quantum} for a thorough introduction to continuum solvation models. 

While for most of the applications of domain decomposition methods, the computational domain remains fixed (such as in engineering-like applications) and finer and finer meshes are considered, applications in the present context deal with different molecules consisting of a (very) large  number of atoms.
Each atom is associated with a corresponding van der Waals (vdW)-ball with a given and element-specific radius so that the total computational domain consists of the union of those vdW-balls. 
For a set of different molecules the computational domain is therefore changing and the Schwarz domain decomposition exhibits different convergence properties. 
For example, for an (artificial) linear chain of atoms of increasing length the Schwarz domain decomposition is scalable and does not require a so-called coarse space correction~\rev{\cite{ddCOSMO1}.}
 
A general convergence analysis of the Schwarz domain decomposition iterative method for the \emph{family} of domains $\Omega^M$, $M=2,3,\ldots$, can not easily be deduced  from classical analyses of domain decomposition methods available in the literature, e.g. \cite{Dolean,Toselli,Xuoverview}. This is due to the fact that these classical analyses assume a \emph{fixed domain} that is decomposed in an increasing number of (overlapping) subdomains of decreasing size, whereas in the setting outlined above the subdomains all have a given comparable size and the \emph{global domain changes} when the number of subdomains is increased. It turns out that for a convergence analysis in the latter case it is not obvious how results and tools available in classical analyses can be applied. Therefore, in recent papers \cite{chaouqui2018scalability,ciaramella2017analysis,ciaramella2018analysis,ciaramella2018analysisb,ciaramella2019scalability}
this topic has been addressed and new results on the convergence of the Schwarz domain decomposition iterative method on a family of domains $\Omega^M$, $M=2,3,\ldots$, were obtained. 
More precisely, the first theoretical results were obtained for a chain to rectangles in two dimensions~\cite{chaouqui2018scalability,ciaramella2017analysis}, which were later generalized to chain-like structures of disks and balls in two respectively three dimensions~\cite{ciaramella2018analysis,ciaramella2018analysisb}. 
These results, however, cover only (very) special cases as each of the subdomains has a nonempty intersection with the boundary of the computational domain $\Omega^M$, i.e. no balls are allowed that are contained inside $\Omega^M$.
A first step towards a more general analysis can be found in~\cite{ciaramella2019scalability}, which analyzes how  the error propagates and contracts in the maximum norm in a general geometry. 
It is shown that for a molecule with $N$-layers, it takes $N+1$ iterations until the first contraction in the maximum norm is obtained.
This is essential to understand the contraction mechanism, in particular for the first iterations, but unfortunately does not provide much insight on the rate of (asymptotic) convergence.

In this paper we present a  general analysis which covers many cases that occur in applications and that goes beyond the previously mentioned contributions. Although the presentation is somewhat technical, due to the fact that we have to formalize the geometry of the family of domains $\Omega^M$, $M=2,3,\ldots$,  the convergence analysis is based on  a few fundamental ingredients known from the field of subspace correction methods and Sobolev spaces, which are combined with new descriptors of the geometries considered. We outline the main components of the analysis.  We use the well-established framework of subspace correction methods \cite{Xuoverview}. In \cite{Xu}, for the successive (also called ``multiplicative'') variant of the Schwarz domain decomposition method a  convergence analysis in a general Hilbert space setting is derived. The contraction number of the error propagation operator (in the natural energy norm) can be expressed in only \emph{one stability parameter} ($s_0$ in 
Lemma~\ref{lemmaXu} below). This parameter quantifies the \emph{stability of the space decomposition}. 
We bound this stability parameter by introducing a  \emph{new variant of the pointwise Hardy inequality}. This variant allows estimates that take  certain important global geometry properties into account. Using this we derive, for example,  a uniform  bound in $M$, if we have a chain like family of domains, and a bound that grows (in a specified way) as a function of $M$, if we have a family of ``globular '' domains. It is well-known from the literature on domain decomposition methods that in  the latter case one should use an additional ``global coarse level space''. We  propose such a space for our setting and analyze the rate of convergence of the Schwarz method that includes this additional coarse space.  


The paper is organized as follows. In Section~\ref{Sectproblem} the Schwarz domain decomposition that we analyze in this paper is explained and an important result on the rate of convergence of this method, known from the literature, is given. This result essentially states that the contraction number (in the energy norm) of the Schwarz method is characterized by only \emph{one} quantity, which controls the stability of the space decomposition. In Section~\ref{Sectgeometry} we introduce new descriptors of the specific class of domains (union of overlapping balls) that is relevant for our applications. Furthermore, for this class of domains a natural partition of unity is defined and analyzed. This partition of unity is used in Section~\ref{SectAnalysis} to derive bounds for the stability quantity. A further key ingredient in our analysis of the Schwarz method is a variant of the pointwise Hardy inequality, that is also presented in Section~\ref{SectAnalysis}. A main result of the paper is given 
Theorem~\ref{thmstab}. As is well-known from the theory of Schwarz domain decomposition methods, in certain situations the efficiency of such a method can be significantly improved by using a global (coarse level) space. For our particular application this issue is studied in Section~\ref{sectcoarse}. Finally, in Section~\ref{Sectexperiments} we present results of numerical experiments, which illustrate certain properties of the Schwarz domain decomposition method and relate these to the results of the convergence analysis.

\section{Problem formulation and Schwarz domain decomposition method} \label{Sectproblem}
We first describe the class of domains $\Omega^M$ that we consider. Let $m_i \in \Bbb{R}^3$, $i=1,\ldots,M$, be the centers of balls and $R_i$ the corresponding radii. 
We define 
\begin{align*}
  \Omega_i &:= B(m_i;R_i) = \{ \, x \in \Bbb{R}^3~|~\|x-m_i\| < R_i \,\}, \\
  \Omega^M & := \cup_{i=1}^M \Omega_i.
\end{align*}
We consider  the  Poisson equation: determine $u \in H_0^1(\Omega^M)$ such that
\begin{equation} 
	\label{problemdef}
	 a(u,v):=\int_{\Omega^M} \nabla u \cdot \nabla v \, dx = f(v) \qquad \text{for all}~~v \in H_0^1(\Omega^M),
\end{equation}
with a given source term $f \in H^{-1}(\Omega^M)$. 
For a subdomain $\omega \subset \Omega_M$ we denote the Sobolev seminorm of first derivatives by $|v|_{1,\omega}^2:= \int_\omega \|\nabla v(x)\|^2 \, dx$.
For solving this problem we use the Schwarz domain decomposition method, also called successive subspace correction in the framework of Xu and Zikatanov~\cite{Xu}. This method is as follows:
\begin{flalign}
  ~~~& \text{Let}~~u^0 \in H_0^1(\Omega^M)~~\text{be given}. &&\nonumber\\\nonumber
  ~~~& \text{for}~~\ell =1,2, \ldots \\\nonumber
  ~~~&~~~~u_0^{\ell-1}:=u^{\ell-1} \\\nonumber
  ~~~&~~~~\text{for}~~i=1:M \\\nonumber
  ~~~&~~~~~~~~\text{Let}~~e_i \in H_0^1(\Omega_i)~~\text{solve}\\
  \label{SSC}
  ~~~&~~~~~~~~~~~~a(e_i,v_i)=f(v_i)-a(u_{i-1}^{\ell-1},v_i) \quad \text{for all}~~v_i \in H_0^1(\Omega_i). \\\nonumber
  ~~~&~~~~~~~~u_i^{\ell-1}:=u_{i-1}^{\ell-1}+e_i \\\nonumber
  ~~~&~~~~\text{endfor} \\\nonumber
  ~~~&~~~~u^\ell:=u_M^{\ell-1}\\\nonumber
  ~~~& \text{endfor}
\end{flalign}
This is a linear iterative method and its error propagation operator is denoted by $E$. We then obtain
\[
 u-u^{\ell}=E(u-u^{\ell-1})= \; \ldots \; =E^{\ell}(u-u^0).
\]
An analysis of this method is presented in an abstract Hilbert space framework in~\cite{Xu}. Here we consider only the case, in which the subspace problems in \eqref{SSC} \emph{are solved exactly}. 
\begin{remark} 
 \rm
 There also is an additive variant of this subspace correction method (called ``parallel supspace correction'' in \cite{Xu}). This method may be of interest because it has much better parallelization properties. This additive variant can be analyzed with tools very similar to the successive one given above. We further discuss  this in Remark~\ref{Remadditive}.
\end{remark}

As norm on $H_0^1(\Omega^M)$ it is convenient to use $|v|_{1,\Omega^M}^2:=a(v,v)$. In this norm the bilinear form $a(\cdot,\cdot)$
has ellipticity and continuity constants both equal to 1. The corresponding operator norm on $H_0^1(\Omega^M)$ is also denoted by $|\cdot|_{1,\Omega^M}$. On $H^1(\Omega_i)$ we use the seminorm $|v_i|_{1,\Omega_i}^2:=a(v_i,v_i)$, $v_i \in H^1(\Omega_i)$. We need the following projection operator $P_i: H_0^1(\Omega^M) \to H_0^1(\Omega_i)$ defined by
\[
  a(P_iv, w_i)=a(v,w_i)\qquad \text{for all}~~w_i \in H_0^1(\Omega_i).
\]

We recall an important result from \cite{Xu} (Corollary 4.3 in \cite{Xu}).
\begin{lemma} \label{lemmaXu}
 Assume that $\sum_{i=1}^M H_0^1(\Omega_i)$ is closed in $H_0^1(\Omega^M)$. Define
 \begin{equation} \label{c0}
  s_0:= \sup_{\substack{v\in H_0^1(\Omega^M) \\ |v|_{1,\Omega^M}=1}} \inf_{\sum_{j=1}^M v_j =v} \sum_{i=1}^M \Big|P_i \sum_{j=i+1}^M v_j\Big|_{1,\Omega_i}^2,
 \end{equation}
where $v_j \in H_0^1(\Omega_j)$ for all $j$. Then
\begin{equation} \label{resXu}
  |E|_{1,\Omega^M}^2 = \frac{s_0}{1+s_0}
\end{equation}
holds. 
\end{lemma}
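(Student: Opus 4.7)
The statement is essentially Corollary~4.3 of \cite{Xu}; the plan is to reproduce Xu's argument. It has two ingredients: (i) a Pythagorean-type identity for the energy of $E$, and (ii) an algebraic manipulation that pairs $|v|_{1,\Omega^M}^2$ with $\sum_i|P_iE_{i-1}v|_{1,\Omega^M}^2$ through an arbitrary subspace decomposition of $v$. The closedness hypothesis on $\sum_i H_0^1(\Omega_i)$ enters only to guarantee that the inner infimum in the definition of $s_0$ is not vacuously $+\infty$: every $v\in H_0^1(\Omega^M)$ must admit at least one decomposition $v=\sum_j v_j$ with $v_j\in H_0^1(\Omega_j)$.

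For the first ingredient I would set $E_0:=I$ and $E_i:=(I-P_i)E_{i-1}$, so that $E_M=E$. Each $P_i$ is the $a$-orthogonal projector onto $H_0^1(\Omega_i)$, hence Pythagoras gives $|(I-P_i)w|_{1,\Omega^M}^2=|w|_{1,\Omega^M}^2-|P_iw|_{1,\Omega^M}^2$; applied with $w=E_{i-1}v$ and telescoped in $i$ this yields
\[
  |v|_{1,\Omega^M}^2 - |Ev|_{1,\Omega^M}^2 = \sum_{i=1}^M|P_iE_{i-1}v|_{1,\Omega^M}^2.
\]
Thus $|E|_{1,\Omega^M}^2 = 1 - \mu$ with $\mu := \inf_{|v|_{1,\Omega^M}=1}\sum_i|P_iE_{i-1}v|_{1,\Omega^M}^2$, so the claim $|E|^2 = s_0/(1+s_0)$ is equivalent to $\mu = 1/(1+s_0)$.

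For the direction $\mu \ge 1/(1+s_0)$ (which gives $|E|^2 \le s_0/(1+s_0)$), I fix $v$ and any admissible decomposition $v=\sum_j v_j$, and use $a$-self-adjointness of $P_i$, the identity $P_i v_i = v_i$, and the telescoped expansion $v-E_{i-1}v = \sum_{k<i}P_k E_{k-1}v$ to rewrite $\sum_j a(v_j,v)$ as
\[
  |v|_{1,\Omega^M}^2 \;=\; \sum_{i=1}^M a\!\left(\,v_i + P_i\!\sum_{j>i}v_j,\;P_iE_{i-1}v\,\right).
\]
A direct expansion together with the polarised identity $\sum_{i<j}a(v_i,v_j) = \tfrac12\bigl(|v|^2 - \sum_j|v_j|^2\bigr)$ gives the clean norm relation $\sum_i|v_i + P_i\sum_{j>i}v_j|^2 = |v|^2 + \sum_i|P_i\sum_{j>i}v_j|^2$, so Cauchy--Schwarz and passing to the infimum over decompositions yield $|v|^4 \le (1+s_0)|v|^2\sum_i|P_iE_{i-1}v|^2$, i.e.\ $|v|^2 \le (1+s_0)\sum_i|P_iE_{i-1}v|^2$ and hence $\mu \ge 1/(1+s_0)$.

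For the matching direction $\mu \le 1/(1+s_0)$ one has to exhibit sharpness in the Cauchy--Schwarz above: take a unit $v^\star$ nearly maximising the outer supremum in the definition of $s_0$ together with a nearly optimising inner decomposition $v^\star = \sum_j v_j^\star$, and verify, via the first-order optimality conditions of the inner infimum, that $v_j^\star + P_j\sum_{k>j}v_k^\star$ is proportional to $P_jE_{j-1}v^\star$ for each $j$, which is precisely the equality case of the Cauchy--Schwarz used above. The main obstacle is exactly this reverse direction: the rearrangement and Cauchy--Schwarz in the upper bound are essentially mechanical once the identity for $|v|^2$ is in place, whereas the sharpness argument forces $v^\star$ to coincide with a dominant eigenvector of $E^\ast E$, and the algebraic identification of the optimal decomposition requires the more delicate variational bookkeeping just sketched.
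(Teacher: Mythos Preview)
The paper does not give its own proof of this lemma: it is quoted as Corollary~4.3 of \cite{Xu} and used as a black box. So there is nothing in the paper to compare against; your proposal is a reconstruction of the Xu--Zikatanov argument itself.

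For the inequality $|E|_{1,\Omega^M}^2\le s_0/(1+s_0)$ your reconstruction is correct in every step: the telescoping identity $|v|^2-|Ev|^2=\sum_i|P_iE_{i-1}v|^2$, the rearrangement
\[
|v|_{1,\Omega^M}^2=\sum_{i=1}^M a\Bigl(v_i+P_i\!\sum_{j>i}v_j,\;P_iE_{i-1}v\Bigr),
\]
the norm identity $\sum_i\bigl|v_i+P_i\sum_{j>i}v_j\bigr|^2=|v|^2+\sum_i\bigl|P_i\sum_{j>i}v_j\bigr|^2$, and the Cauchy--Schwarz conclusion all check out. One small correction: closedness of $\sum_iH_0^1(\Omega_i)$ by itself does not guarantee that every $v$ admits a decomposition---you need the sum to equal $H_0^1(\Omega^M)$, which the paper establishes separately in~\eqref{res2cor}.

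For the reverse inequality your sketch has a genuine gap. If you actually carry out the first-order optimality of the inner infimum (Lagrange multiplier $\lambda\in H_0^1(\Omega^M)$ for the constraint $\sum_jv_j=v^\star$), the stationarity condition $P_k\bigl(\sum_{i\le k}w_i-\lambda\bigr)=0$ unwinds recursively to
\[
v_k^\star+P_k\!\sum_{j>k}v_j^\star \;=\; P_kE_{k-1}\lambda,
\]
not $P_kE_{k-1}v^\star$. Equality in your Cauchy--Schwarz step, however, needs the second slot to be $P_iE_{i-1}v^\star$ with the \emph{same} $v^\star$, so you would still have to argue that $\lambda$ is a scalar multiple of $v^\star$. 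This is not automatic from the inner optimality alone; it is precisely where the Xu--Zikatanov proof does additional work, either by bringing in the outer optimality (so that $v^\star$ is an eigenvector of $E^\ast E$) or by constructing the decomposition directly from a carefully chosen $\lambda$ and then matching $\sum_jv_j$ back to $v^\star$. You are right that this is the delicate half, but the mechanism you describe does not yet close it.
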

\ \\

The constant $s_0$ quantifies the \emph{stability of the decomposition} of the space $H_0^1(\Omega^M)$ into the sum of subspaces $H_0^1(\Omega_i)$. Due to the result \eqref{resXu} we have that the contraction number of the Schwarz domain decomposition method (in the natural $|\cdot|_{1,\Omega^M}$ norm) depends only on $s_0$. Hence, if $s_0$ is independent of certain parameters (e.g., in our setting $M$) then the contraction rate is also robust w.r.t. these parameters. In the remainder of this paper we analyze this stability quantity $s_0$ depending on the geometrical setting and the closedness assumption needed in Lemma~\ref{lemmaXu}.
The analysis is based on a particular decomposition $v= \sum_{i=1}^M \theta_i v=  \sum_{i=1}^M v_i$ with $v_i:=\theta_i v$ and $(\theta_i)_{1 \leq i \leq M}$ forms a partition of unity that is introduced and analyzed in the next section.

\section{Geometric properties of the domain $\Omega^M$ and partition of unity} \label{Sectgeometry}

The number $M$ of balls is arbitrary and in the analysis below it is important that in estimates and in further results we explicitly address the dependence on the number~$M$. The estimates in the analysis below depend on certain geometry related quantities that we introduce in this section.

In order to formalize the geometry dependence in our estimates,  we introduce a (infinite but countable) family of geometries $\{\mathcal F_M\}_M$ indexed by the increasing number $M \in \Bbb{N}$ of balls, where each element $\mathcal F_M = \{\,B(m_i,R_i) \; | \; i=1,\ldots M \,\}$ represents the set of balls defining the geometry $\Omega^M$ characterized by the set of centers and radii. 
We further introduce
\[
	R_{\min} (\mathcal F_M):=\min_{1 \leq i \leq M} R_i, 
	\qquad 
	R_{\max} (\mathcal F_M):=\max_{1 \leq i \leq M} R_i.
\]

 We first start with stating basic assumptions on the geometric structure of the considered domains.
\begin{assumption}[Geometry assumptions]\rm
\begin{description}
[leftmargin=1cm,style=nextline,itemsep=5pt,font=\mdseries,topsep=3pt]
\item[{\bf (A1)}] For each $M$, we assume that $\Omega^M$ is connected. This assumption is made without loss of generality. If $\Omega^M$ has multiple components, the problem \eqref{problemdef} decouples into independent problems on each of these components and the analysis presented below applies to the problem on each  component.
\item[{\bf (A2)}] For each $M$, we assume that there are no $i,j$, with $i\neq j$, such that $\Omega_i \subset \Omega_j$, i.e., balls are not completely contained in larger ones. Otherwise, the inner balls can be removed from the geometric description without further consequences.
\item[{\bf (A3)}] 
We assume the radii of the balls to be uniformly bounded in the family $\{\mathcal F_M\}_M$: there exists $ R_{\max}^\infty$ and $R_{\min}^\infty > 0$ such that
\[
  \sup_M R_{\max} (\mathcal F_M) \le R_{\max}^\infty < \infty, \qquad \inf_M R_{\min} (\mathcal F_M) \ge R_{\min}^\infty > 0.
\]
\item[{\bf (A4)}]
(Exterior cone condition) We assume that 
 for each $y\in\partial\Omega^M$ there exists a circular cone $C(y)$ with \rev{aperture $2 \beta\ge 2 \beta^M>0$,} apex $y$ and axis $n(y)$ that belongs entirely to the outside of $\Omega^M$ in a neighborhood of $y$, i.e., $B(y;\epsilon) \cap C(y) \cap \Omega^M = \emptyset$ for $\epsilon >0$ sufficiently small.
We furthermore assume that  $\beta^M$ is uniformly bounded from below: there exists $\beta^\infty > 0$ such that
\[
	\inf_M \beta^M  \ge \beta^\infty > 0.
\]

\end{description}
\end{assumption}
\noindent
Related to this we have the following result.
\begin{lemma}
\label{lem:A4equiv}
For  $y\in\partial\Omega^M$  denote by $i_t$, $t=1,\ldots,r$, all indices such that 
$
	y\in \partial \Omega_{i_t},
$
and define $v_t = \frac{m_{i_t}-y}{\|m_{i_t}-y\|}$.
Assumption {\bf (A4)} is equivalent to the following one:

There exists  $\ga^\infty>0$ such that for each $M$ and for each $y\in\partial\Omega^M$, there exists a unit vector $n(y)$ such that $-n(y)\cdot v_t \ge \ga^\infty>0$ for all $t=1,\ldots,r$. This implies that all vectors $v_t$ are situated on one side of the plane that is perpendicular to $n(y)$ and passing through $y$. 
\end{lemma}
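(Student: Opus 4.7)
The plan is to translate the exterior cone condition at a boundary point $y$ into a condition on the unit vector $n(y)$ that serves as the cone axis, and then show this condition is exactly the half-space separation property stated in the lemma, with $\gamma^\infty = \sin\beta^\infty$.

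First, I would localize. Since $y\in\partial\Omega^M$, for every index $i\notin\{i_1,\ldots,i_r\}$ we have $y\notin\overline{\Omega_i}$, so $\mathrm{dist}(y,\Omega_i)>0$, and a small enough neighborhood of $y$ is disjoint from $\Omega_i$. Consequently, the cone $C(y)$ lies outside $\Omega^M$ near $y$ if and only if it lies outside each $\Omega_{i_t}$ for $t=1,\ldots,r$ near $y$. Next I would compute the exit condition for a single ball $\Omega_{i_t}$: using $\|y-m_{i_t}\|=R_{i_t}$ and $y-m_{i_t}=-R_{i_t}v_t$, a direct expansion of $\|y+tw-m_{i_t}\|^2<R_{i_t}^2$ for small $t>0$ yields that $y+tw\in\Omega_{i_t}$ for small $t>0$ if and only if $w\cdot v_t>0$. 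Hence the cone $C(y)=\{y+tw:t>0,\ w\cdot n(y)\ge\cos\beta,\ \|w\|=1\}$ avoids $\Omega_{i_t}$ in a neighborhood of $y$ precisely when $w\cdot v_t\le 0$ for every unit vector $w$ with $w\cdot n(y)\ge\cos\beta$.

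The next (and most delicate) step is a spherical-geometry reduction: maximize $w\cdot v_t$ over the spherical cap $\{w:\|w\|=1,\ w\cdot n(y)\ge\cos\beta\}$. Writing $w=\alpha n(y)+\sqrt{1-\alpha^2}\,u$ with $u\perp n(y)$, $\|u\|=1$, setting $c:=n(y)\cdot v_t$, the inner product becomes $\alpha c+\sqrt{1-\alpha^2}\sqrt{1-c^2}\,(\text{scaled})\le\alpha c+\sqrt{1-\alpha^2}\sqrt{1-c^2}$. Parametrising $\alpha=\cos\phi$, $c=\cos\psi$ with $\phi\in[0,\beta]$, $\psi\in[0,\pi]$, this upper bound is $\cos(\phi-\psi)$. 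Requiring $\cos(\phi-\psi)\le 0$ for all $\phi\in[0,\beta]$ forces $\psi\ge\pi/2+\beta$, i.e.\ $c\le-\sin\beta$. Thus the exterior cone condition at $y$ with angle $\beta$ is equivalent to $n(y)\cdot v_t\le-\sin\beta$ for all $t$, that is $-n(y)\cdot v_t\ge\sin\beta$.

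With this equivalence in hand both implications follow uniformly in $M$. If {\bf (A4)} holds, the axis $n(y)$ of the exterior cone satisfies $-n(y)\cdot v_t\ge\sin\beta^M\ge\sin\beta^\infty=:\gamma^\infty>0$ for all $t$, which is the stated condition. Conversely, given $n(y)$ with $-n(y)\cdot v_t\ge\gamma^\infty$, the same computation in reverse shows that the cone with axis $n(y)$ and opening angle $\beta^\infty:=\arcsin\gamma^\infty>0$ is disjoint from each $\Omega_{i_t}$ in a neighborhood of $y$ and hence (by the localization step) from $\Omega^M$, so {\bf (A4)} holds with the same uniform lower bound. The final sentence of the statement — that all $v_t$ lie on one side of the plane through $y$ perpendicular to $n(y)$ — is immediate since $-n(y)\cdot v_t>0$. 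The only technical obstacle worth flagging is the spherical-cap maximization; everything else is elementary once localization is done.
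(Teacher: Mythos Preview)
Your proof is correct and follows the same idea as the paper's: both identify the maximal exterior-cone half-angle at $y$ in direction $n(y)$ with $\min_t \arcsin(-n(y)\cdot v_t)$, yielding the relation $\gamma_\alpha^\infty = \sin\beta^\infty$. The paper argues this geometrically via the complementary-angle relation $\alpha_{i_t}+\beta_{i_t}=\tfrac{\pi}{2}$ between $n(y)$ and the tangent plane to $\partial\Omega_{i_t}$ at $y$ (relying on a figure), whereas you carry out the equivalent spherical-cap maximization explicitly and add the localization step; the underlying argument is the same, with your version being more self-contained.
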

\begin{proof}
The limiting \rev{aperture} of a cone with apex $y$ in the direction of $n(y)$ is given by twice the minimal  angle of $n(y)$ with the tangential plane at $y$ to each ball $\Omega_{i_t}$, $t=1,\ldots,r$. This angle is illustrated by $\beta_{i_t}(y)$ in Figure~\ref{fig:IntExt}. Note that $\alpha_{i_t}+\beta_{i_t}=\tfrac12 \pi$, where $\alpha_{i_t}$ is the angle between $n(y)$ and $v_{i_t}$. Hence $\beta_{i_t}$ is bounded away from zero if and only if $\alpha_{i_t}$ is bounded away from $\tfrac12 \pi$, i.e. $ \cos (\alpha_{i_t})$ bounded away form zero. Finally note that $ \cos (\alpha_{i_t}) = - n(y) \cdot v_{i_t}$, which shows that the uniform boundedness away from zero of the interior cone angles $\beta_{i_t}$ and of $- n(y) \cdot v_{i_t}$ are equivalent conditions.
\end{proof}


\black
The condition of Lemma~\ref{lem:A4equiv} provides a precise mathematical statement in terms of geometrical notions. 
This condition for instance excludes the following scenarios, using the notation $D=\mbox{dim}(\mbox{span}(v_{i_1},\ldots,v_{i_r}))$, and where Figure~\ref{fig:IntExt} (middle and right) provides a schematic illustration of those two cases:
\begin{description}
\item[$D=1$:] Intersection of two  balls $\overline\Omega_{i_t}$ is only one point, that is, $y$ and the centers $m_{i_t}$ are on one line. In turn, only the plane $\mathcal P$ passing through $y$ which is perpendicular to line passing through $v_{i_1}$ and $v_{i_2}$ does not intersect $\Omega^M$ locally around $y$ and there exists no cone of positive \rev{aperture} with apex $y$ that (locally) belongs to the outside of $\Omega^M$.
\item[$D=2$:] Intersection of three  balls $\overline\Omega_{i_t}$ is one point. Here, only the line $y+ tw$, with $w\in\R^3$ being the normal vector to the plane passing through $v_{i_1}$, $v_{i_2}$, $v_{i_3}$ and $t\in(-\varepsilon,\varepsilon)$, belongs to the outside of $\Omega^M$. In turn, there exists no cone of positive \rev{aperture} with apex $y$ that (locally) belongs to the outside of $\Omega^M$.
\end{description}

\begin{figure}[t!]
	\centering
	\includegraphics[trim=50pt 120pt 420pt 80pt, clip, scale=0.25]{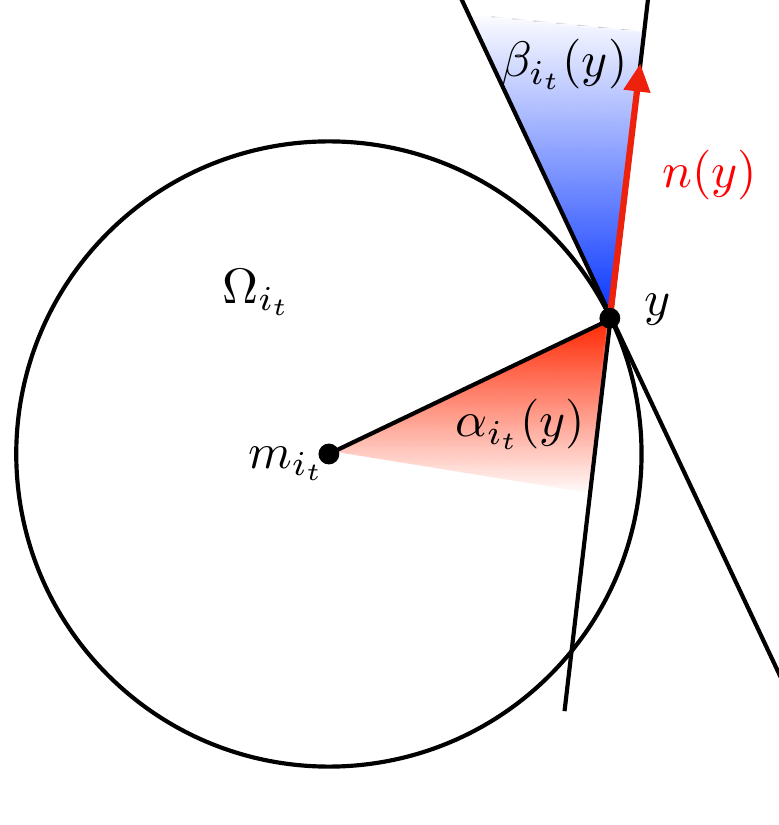}
	\includegraphics[trim=50pt 580pt 50pt 40pt, clip, scale=0.45]{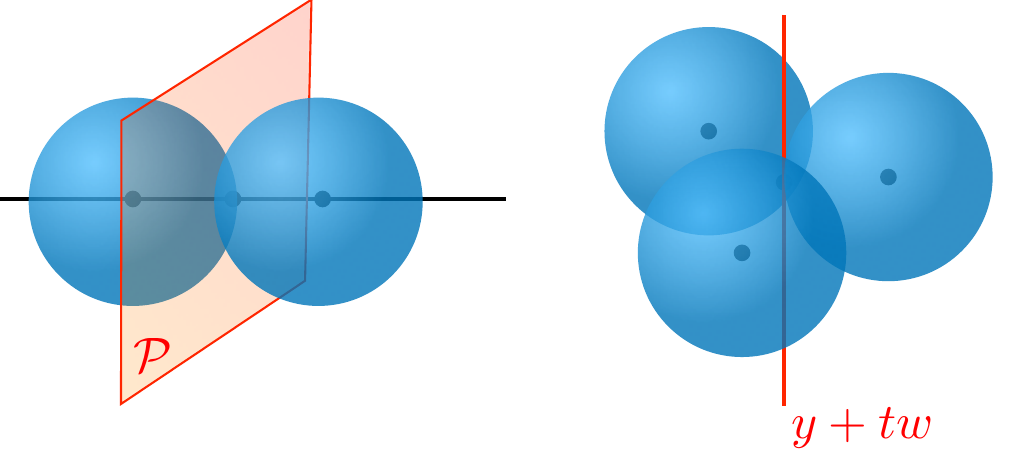}
\caption{Relation between $\alpha$ and $\beta$ (left), illustrative example for the case $D=1$ (middle) and for the case $D=2$ (right) related to the violation of condition {\bf (A4)}.}
	\label{fig:IntExt}
\end{figure}

\black 

\subsection{Local geometry indicators}
We introduce certain geometry descriptors, which we call \emph{indicators}, that are related to the specific geometry of the domain $\Omega^M$ and that will be used in the estimates derived below.
We will distinguish between local and global indicators, the former only being dependent on \emph{local geometrical features} whereas the latter being dependent on the \emph{global topology} of the geometric configuration.

We introduce some further definitions. We take a fixed $\mathcal F_M \in \{\mathcal F_M\}_M$, with corresponding domain  $\Omega^M$.
We decompose the index set $\cI:=\{1,\ldots,M\}$ into two disjoint sets by introducing $\cI_{\intset}:= \{\, i \in \cI~|~\partial\Omega_i \cap \partial \Omega^M= \emptyset \,\}$ (``interior balls'') and $\cI_{\bset}:=\cI  \setminus \cI_{\intset}$ (``boundary balls''). The corresponding (overlapping) subdomains are denoted by $\Omega_{\intset}:= \cup_{i \in \cI_{\intset}} \Omega_i$, $\Omega_{\bset}:= \cup_{i \in I_{\bset}} \Omega_i$. It may be that $\cI_{\intset}$ is an empty set. We define, for $i \in \cI$,  $\cN_i:=   \{\, j \in \cI~|~\Omega_i \cap \Omega_j\neq \emptyset\,\}$, $\cN_i^0:= \cN_i \setminus \{i\}$. 
For $i \in \cI_{\bset}$ we define 
$\Gamma_i := \partial\Omega_i \cap \partial \Omega^M$.
Further, define
\begin{align*}
  \delta_i(x) & := \max\{0, R_i- \|x- m_i\|\,\},  &&x \in \overline\Omega^M,~~i\in \cI,\\
  \delta(x) & :=\sum_{i \in \cI} \delta_i(x),  &&x \in \overline\Omega^M.
\end{align*}
Hence, on $\Omega_i$ the function $\delta_i$ is the distance function to $\partial \Omega_i$, which is extended by 0 outside $\Omega_i$. Note that $\delta(x)= \sum_{j \in \cN_i} \delta_j(x) $ for $x \in \Omega_i$  and that, for any $x
\in\Omega^M$, there holds that $\delta(x)=0$ if and only if $x\in\partial\Omega^M$. 

In the following, we list the indicators that are used in the upcoming analysis.
\begin{indicator}[Maximal number of neighbors]
\rm
Define 
\begin{equation} 
	\label{card2}
	\Nmax := \max_{i\in \cI} \, \mbox{card}(\cN_i ).
\end{equation}
i.e., $\Nmax-1$ is the maximal number of neighboring balls that overlap any given ball~$\Omega_i$. \rev{Note, that if to each $\Omega_i$ we associate a number $\alpha_i \geq 0$, then $\sum_{i=1}^M\sum_{j \in \cN_i} \alpha_j \leq \Nmax \sum_{i=1}^M \alpha_i$ holds.}
\end{indicator}

\begin{indicator}[Maximal overlap indicator]\rm
Let  $\Nzero$ be the smallest integer such that:
\begin{equation} \label{card}
	\max_{x\in \Omega^M}{\rm card}\{\, j~|~x\in \Omega_j\,\} \leq \Nzero. 
\end{equation}
Hence, $\Nzero-1$ is the maximal number of neighboring balls that overlap any given point $x\in \Omega_i$ of any given ball $\Omega_i$.
\end{indicator}

\rev{
Note that for $i \in \cI_{\intset}$ we have $\partial \Omega_i \subset \cup_{j \in \cN_i^0} \Omega_j$, hence $\delta(x) >0$ for all $x \in \partial \Omega_i$,  and thus  $\delta(x) >0$ for all $x \in \Omega_i$.   
In the same vein, we recall that for all $x\in\Omega^M$ such that $\mbox{dist}(x,\partial \Omega^M) > 0$, there holds $\delta(x)>0$.
This motivates us to define the following indicator.
\begin{indicator}[Stable overlap indicator] 
\label{indintballs} 
\rm 
We define 
\begin{equation}
	\gint := \min_{x\in\Omega_{\intset,\beta}} \delta(x) >0,
\end{equation}
where 
\begin{equation}
	\Omega_{\intset,\beta} := \left\{ x\in \Omega^M | x\in\Omega_{\intset} \mbox{ or dist}(x,\partial \Omega^M) > R^\infty_{\min} \sin(\beta^\infty) \right\}.
\end{equation}
\end{indicator}
With this definition, there holds
 \begin{equation} \label{overlap1}
	\delta(x) \geq \gint, 
\end{equation}
for all $x \in \Omega_{\intset}$ or such that dist$(x,\partial \Omega^M) > R^\infty_{\min} \sin(\beta^\infty)$.
}
Note that by construction, this is a local indicator.
\begin{remark} \rm
The indicator $\gint$ is a measure for the amount of overlap between any interior ball and its neighboring balls. 
The indicator is small if there exists a point $x \in \Omega_i$, with $i \in \cI_{\intset}$, that is simultaneously close to $\partial \Omega_i$ and to the boundary $\partial \Omega_j$ of all balls $\Omega_j$ with $x\in\Omega_j$. 
\end{remark}

A proof of the following lemma, giving rise to a further indicator, is given in Appendix~\ref{App1}.
\begin{lemma} [Stable overlap for boundary balls] 
\label{lem:DistEq}
Under Assumption {\bf (A4)}, there exists $\gamma_{\bset}>0$, such that
\begin{equation}
	\label{overlap3}
  	\delta(x) \geq \gamma_{\bset} \,{\rm dist}(x,  \partial \Omega^M)\quad \text{for all}~x \in \Omega_{\bset}.
\end{equation}
\end{lemma}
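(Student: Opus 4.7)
For $x \in \Omega_{\bset}$, the plan is to let $y \in \partial\Omega^M$ realize $d := {\rm dist}(x, \partial\Omega^M) = \|x-y\|$, enumerate the indices $i_1,\ldots,i_r$ with $y \in \partial\Omega_{i_t}$, and invoke Lemma~\ref{lem:A4equiv} to select a unit vector $n(y)$ such that $-n(y)\cdot v_t \ge \gamma^\infty$ for all $t$. The task reduces to bounding $\delta(x)$ from below by a fixed multiple of $d$.

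The first step is to constrain the direction $w := (x-y)/d$ using the exterior cone. Since $y$ is a closest boundary point, the open ball $B(x,d)$ lies in $\Omega^M$ and touches $\partial\Omega^M$ only at $y$, whereas $C(y) \cap B(y,\epsilon)$ lies in the complement of $\Omega^M$ for small $\epsilon > 0$. Hence no ray from $y$ inside $C(y)$ can enter $B(x,d)$, and an elementary spherical-cap computation with the cone of half-angle $\beta^\infty$ yields $w \cdot (-n(y)) \ge \sin\beta^\infty = \gamma^\infty$, the final identification being precisely what is used in the proof of Lemma~\ref{lem:A4equiv}.

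The second step is to identify a ball that $x$ actually enters from $y$. Since $y + s w \in B(x,d) \subset \Omega^M$ for all $s \in (0,d]$, and since in a neighborhood of $y$ one has $\Omega^M = \bigcup_{t=1}^r \Omega_{i_t}$, some index $t_0$ must satisfy $y + s w \in \Omega_{i_{t_0}}$ for all small $s > 0$; in particular $x \in \Omega_{i_{t_0}}$ and $w \cdot v_{t_0} > 0$. Using the identity $\|x - m_{i_{t_0}}\|^2 = R_{i_{t_0}}^2 - 2 R_{i_{t_0}} d(w \cdot v_{t_0}) + d^2$, together with the elementary estimate $R-\sqrt{R^2-A} \ge A/(2R)$ for $0\le A\le R^2$, one obtains
\[
 \delta_{i_{t_0}}(x) \ge d(w \cdot v_{t_0}) - \frac{d^2}{2 R_{i_{t_0}}}.
\]

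The main obstacle is to turn the mere positivity $w \cdot v_{t_0} > 0$ into a quantitative bound $w \cdot v_{t_0} \ge c\,\gamma^\infty$ for a constant $c > 0$ depending only on the ambient geometric descriptors: both $w$ and $v_{t_0}$ lie in the spherical cap around $-n(y)$ of half-angle $\arccos\gamma^\infty$, so their inner product need not be positive in general. The resolution is to exploit the extra information from the ray construction together with the freedom to choose $n(y)$ in Lemma~\ref{lem:A4equiv}, e.g.\ by tailoring $n(y)$ adaptively to $w$ or by a compactness argument on the unit sphere converting the qualitative bound into a quantitative one. Once such a bound is in hand, the small-$d$ regime produces $\delta(x) \ge \delta_{i_{t_0}}(x) \ge \gb d$ for $d \le c'R_{\min}^\infty\gamma^\infty$ with $\gb$ depending only on $\gamma^\infty$ and $R_{\min}^\infty$; for the complementary regime where $d$ is bounded below by a fixed fraction of $R_{\min}^\infty$ (and above by $2 R_{\max}^\infty$ on $\Omega_{\bset}$), a standard compactness argument using (A3)--(A4) gives a uniform lower bound on $\delta/d$. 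Combining the two regimes produces the claimed constant $\gb > 0$.
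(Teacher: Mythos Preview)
Your proposal identifies the correct setup (closest boundary point $y$, exterior cone, direction $w=(x-y)/d$), and the bound $w\cdot(-n(y))\ge\sin\beta^\infty$ in Step~2 is valid. But the argument has a genuine gap precisely where you yourself label it ``the main obstacle'': upgrading $w\cdot v_{t_0}>0$ to a quantitative bound $w\cdot v_{t_0}\ge c>0$ that is \emph{uniform over the family} $\{\mathcal F_M\}_M$. Neither of your suggested fixes works as written. Tailoring $n(y)$ to $w$ cannot help: both $w$ and every $v_t$ lie in the spherical cap of half-angle $\alpha=\tfrac\pi2-\beta^\infty$ about $-n(y)$, and for $\alpha\ge\tfrac\pi4$ two such vectors may well have nonpositive inner product regardless of how $n(y)$ is chosen. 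A compactness argument on the unit sphere yields a positive infimum for a \emph{fixed} $y$ in a \emph{fixed} $\Omega^M$, but gives no control uniform in $M$; the same objection applies to your large-$d$ regime, where you invoke a ``standard compactness argument using (A3)--(A4)'' without explaining how it survives passage to the infinite family. A related issue: the claim ``in particular $x\in\Omega_{i_{t_0}}$'' in Step~3 already presupposes the bound you seek, since $x=y+dw\in\Omega_{i_{t_0}}$ holds iff $d<2R_{i_{t_0}}(w\cdot v_{t_0})$.

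The paper closes the gap by a structural fact you do not use: because $y$ is the \emph{nearest} boundary point to $x$, one has $x\in\mathrm{cone}_y(\mathbf m_{\mathbf i})=\{y+\sum_t\lambda_t v_t:\lambda_t\ge0\}$ (a Voronoi-type result, Theorem~2.6 in~\cite{Voronoi}). Under {\bf (A4)} this generalized cone is covered by the circular cones $K_y(v_t,\alpha)$---indeed, if $u=\sum_t\lambda_t v_t\neq 0$ had $u\cdot v_t<\|u\|\cos\alpha$ for every $t$, then $\|u\|^2=\sum_t\lambda_t(u\cdot v_t)<\|u\|\cos\alpha\sum_t\lambda_t$, while $-n(y)\cdot u>\cos\alpha\sum_t\lambda_t$, forcing $-n(y)\cdot u>\|u\|$, a contradiction---so for some $j$ one gets $\sphericalangle(w,v_j)\le\alpha$ and hence $w\cdot v_j\ge\cos\alpha$ directly, with no compactness needed. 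The paper also splits on the size of $\delta(x)$ rather than of $d$: the regime $\delta(x)\ge\varepsilon:=R_{\min}(1-\sin\alpha)$ is then trivial (because $d\le 2R_{\max}$ on $\Omega_{\bset}$), and $\delta(x)<\varepsilon$ is exactly what places $x$ in the annulus $B(m_j;R_j)\setminus\overline{B(m_j;R_j\sin\alpha)}$ where an explicit cone estimate (Lemma~\ref{lem:calpha}) gives $\delta_j(x)\ge\tfrac{\cos\alpha}{2}\,d$.
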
 
\begin{indicator}[Stable overlap for boundary balls] 
\label{ind:DistEq}
The constant $\gamma_{\bset}>0$ defined in Lemma~\ref{lem:DistEq} is considered as a geometry indicator.
\end{indicator}


\medskip 
The indicator $\gamma_\bset$ employed in \eqref{overlap3} clearly is a local one. An explicit formula $\gamma_{\bset}=\gamma_{\bset}(R_{\rm min}, R_{\rm max}, \beta^\infty)$ is given in Eqn. \eqref{formg}.

The four indicators introduced above are all natural ones, which are directly related to the number of neighboring balls and the size of the overlap between neighboring balls.

We need one further local indicator, which needs some introduction. In the analysis of the Schwarz method we use a (natural) partition of unity, cf. Section~\ref{sectPU}. The gradient of some of these partition of unity functions is unbounded at $\partial \Omega^M$, where their growth behaves like $x \to ({\rm dist}( x,\partial \Omega^M))^{-1}$. To be able to handle this singular behavior, we need an integral Hardy estimate of the form
\[
   \left(\int_{\Omega_{\bset}} \left(\frac{u(x)}{ {\rm dist}(x,  \partial \Omega^M)}\right)^2\, dx\right)^\frac12 \leq c\, |u|_{1,\Omega^M} \qquad \text{for all}~u \in H^1_0(\Omega^M),
\]
cf. Corollary~\ref{Coro1}. One established technique to derive such an estimate is as in e.g. \cite{Hajlasz,Kinnunen}, where \emph{pointwise} Hardy estimates are used to derive integral Hardy estimates. The analysis in this approach is based on a certain ``fatness assumption'' for the complement of the domain $\Omega^{M,c}:=\Bbb{R}^3 \setminus \Omega^M$. Here we follow this approach and below we will introduce a local indicator that quantifies this exterior fatness of the domain, which is very similar to the fatness indicator used in \cite{Hajlasz,Kinnunen} (cf., for example, Proposition 1 in \cite{Hajlasz}). Before we define the fatness indicator, note that due to the definition of $\Omega^M$ as a union of balls and assumption {\bf (A4)}, we have the following property
\begin{equation} \label{propfat1}
 \forall \,y \in \partial \Omega^M: ~~\exists r_0>0,~c>0:~~|B(y;r)\cap \Omega^{M,c}| \geq c\, |B(y;r)|\qquad \forall \,r \in (0,r_0].
\end{equation}  
\begin{indicator}[Local exterior fatness indicator] \label{indic45}\rm
For $ i \in \cI_{\bset}$ and $x \in \Omega_i$ we define a closest point projection on $ \Gamma_i$ by $p(x)$, i.e.,  $p(x) \in  \Gamma_i$ and $\|p(x)-x\|={\rm dist}(x,  \Gamma_i)$.  From \eqref{propfat1} it follows that there exists  $\hat c_i >0$ (depending on the constants $c=c(y)$ in \eqref{propfat1} and possibly also on $R_i$) such that
\begin{equation} \label{fatness1}
	 |B\big(p(x); \|p(x)-x\|\big)\cap \Omega^{M,c}| \geq \hat c_i |B\big(p(x),\|p(x)-x\|\big)|, \qquad \text{for all}~~x \in \Omega_i,
\end{equation}
with $ \Omega^{M,c}:= \Bbb{R}^3 \setminus \Omega^M$.
We define 
\begin{equation} \label{fatness}
	\cf:= \min_{i\in \cI_{\bset}} \hat c_i > 0 .
\end{equation}
\end{indicator}

\begin{remark}
	\label{rem:cones} \rm
	We call this a \emph{local} exterior fatness indicator because $\hat c_i$ depends only on a small neighbourhood of $\Omega_i$, consisting of points that have distance at most $R_i$ to $\Omega_i$. 
	The quantity $\hat c_i$ essentially (only) depends on two geometric parameters related  to exterior cones with apex at $y \in (\partial \Omega_i \cap\partial \Omega^M)$, namely the maximum possible aperture  and the maximal cone height such that the cone is completely contained in $\Omega^{M,c}$.
	For points lying only on one sphere $\partial\Omega_i$, the cone can be chosen to be as wide as a flat plane. 
	For points lying on an intersection arc $\partial\Omega_i\cap\partial\Omega_j$, the largest aperture of the cone is determined only by the center and radii of the two balls $\Omega_i, \Omega_j$. Finally any point lying on an intersecting point of three or more boundary spheres can be assigned a cone whose maximal aperture depends on the radii and centers of the associated balls.
	Figure~\ref{fig:ExtFat_cone} (left) provides a schematic illustration. The maximal cone height at $y \in (\partial \Omega_i\cap\partial \Omega^M)$ is related to the width of $\Omega^{M,c}$ at $y$ in the direction of the axes of the cone, see also Figure~\ref{fig:ExtFat_cone} (right) for a schematic 2D-illustration. If these apertures and heights are bounded away from zero (uniformly in $y \in (\partial \Omega_i\cap\partial \Omega^M)$), the quantity  $\hat c_i$ is bounded away from zero. In our applications we consider domains $\Omega^M$  such that the apertures and heights satisfy this property.
\end{remark}

\begin{figure}[t!]
	\centering
	\includegraphics[trim=280pt 200pt 250pt 160pt, clip, scale=0.18]{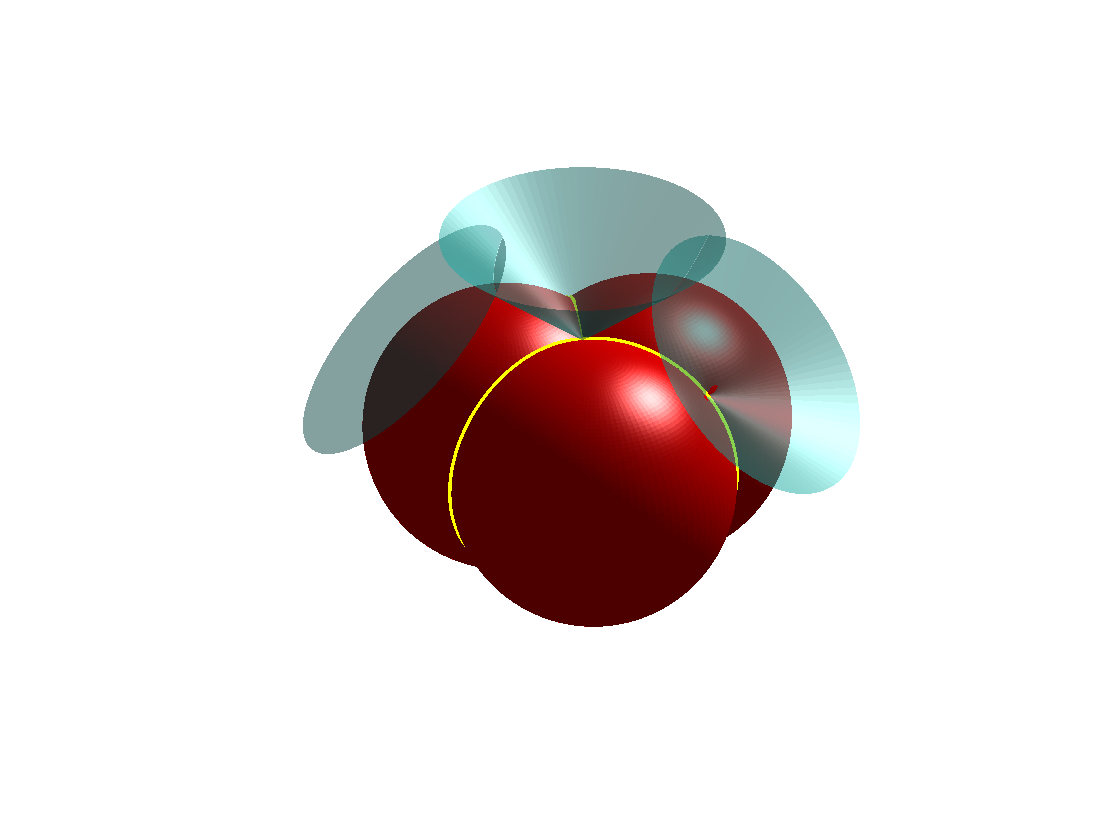}
	\includegraphics[trim=0pt 0pt 0pt 0pt, clip, scale=0.5]{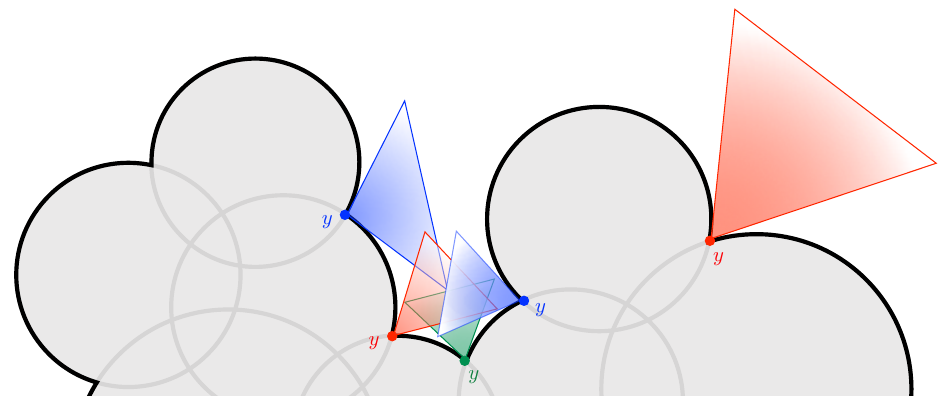}
\caption{Inserting cones of maximal aperture at the boundary points (left). Illustration of some inserted cones in the definition of the global fatness property (right).}
	\label{fig:ExtFat_cone}
\end{figure}

The set of \emph{local geometry indicators}  is denoted by
\begin{equation} \label{locgeo} 
	\cG_L^M:=\{\Nmax,\Nzero,\gint^{-1}, \gb^{-1},\cf^{-1}\}.
\end{equation}
We emphasize  that $\cG_L^M$ depends only on \emph{local geometry properties} of the domain (as explained above) and does not depend on the global topology of $\Omega^M$ (e.g., not relevant whether $\Omega^M$ is a linear chain of balls or has a globular form for example).  
We thus assume the following assumption.
\begin{assumption}[Asymptotic geometry assumption]\rm
\begin{description}
[leftmargin=1cm,style=nextline,itemsep=5pt,font=\mdseries,topsep=3pt]
\item[{\bf (A5)}] We assume that the local geometry indicators $\cG_L^M$ are uniformly bounded in the family $\{\mathcal F_M\}_M$.
\end{description}
\end{assumption}

\subsection{Global geometry indicator}
\label{ssec:GlobalIndicator}
In the analysis below we need a  Poincar\'e-Friedrichs inequality  $\|u\|_{L^2(\Omega^M)} \leq c \, |u|_{1,\Omega^M}$ for $u \in H^1_0(\Omega^M)$, cf.~Lemma~\ref{Lemma5}. As is well-known, the constant $c$ in  this inequality depends on \emph{global} geometry properties of the domain $\Omega^M$ and is directly related to the smallest Laplace eigenvalue in $H^1_0(\Omega^M)$. To control this constant we use an approach, presented in Section~\ref{sectHardy} below, based on pointwise Hardy estimates. For this approach to work one  needs a measure for ``global exterior fatness''. This measure resembles the one used in Indicator~\ref{indic45}, but there are two important differences. Firstly, we now consider $x \in \Omega^M$ instead of only $x \in \Omega_b$. Secondly,  for $x \in \Omega^M$, instead of the corresponding closest point projection $p(x)$ (used in Indicator~\ref{indic45}) we now take a possibly different  exterior point $b(x) \in \Omega^{M,c}=\Bbb{R}^3 \setminus \Omega^M$ such that with 
$d(x):=\|x-b(x)\|$ the exterior 
volume $\big|B\big(b(x);d(x)\big) \cap \Omega^{M,c} \big|$ is comparable to the volume $\big|B\big(b(x);d(x)\big)\big|$. The latter property is a key ingredient in the derivation of satisfactory pointwise Hardy estimates. It turns out that taking $b(x)=p(x)$ is not satisfactory. Below, we present a construction of ``reasonable'' points $b(x)$ that is adapted to the special class of domains that we consider, \rev{namely a union of balls. In the field of applications that we consider, this domain corresponds to a ``solute molecule'' that is surrounded by ``solvent molecules''. In this setting the so-called Solvent Accessible Surface (SAS) of the solute molecule ($\Omega^M$) is defined by the center of a ball (``an idealized spherical probe'') rolling over the solute molecule, that is, the surface enclosing the region in which the center of the ball can not enter.  The  Solvent Excluded Surface (SES), defined by the same spherical probe, is the surface enclosing the region that can be accessed by this spherical probe \cite{lee1971interpretation,rechards1977,connolly1983analytical}. Below in our construction we use these SAS-SES notions which are very natural for our class of domains. 
Explanations and further properties can be found in \cite{Quan}. Here we give only a few definitions and properties that are relevant for our analysis.}   
\begin{figure}[t!]
	\centering
	\includegraphics[trim=60pt 20pt 70pt 50pt, clip, scale=0.25]{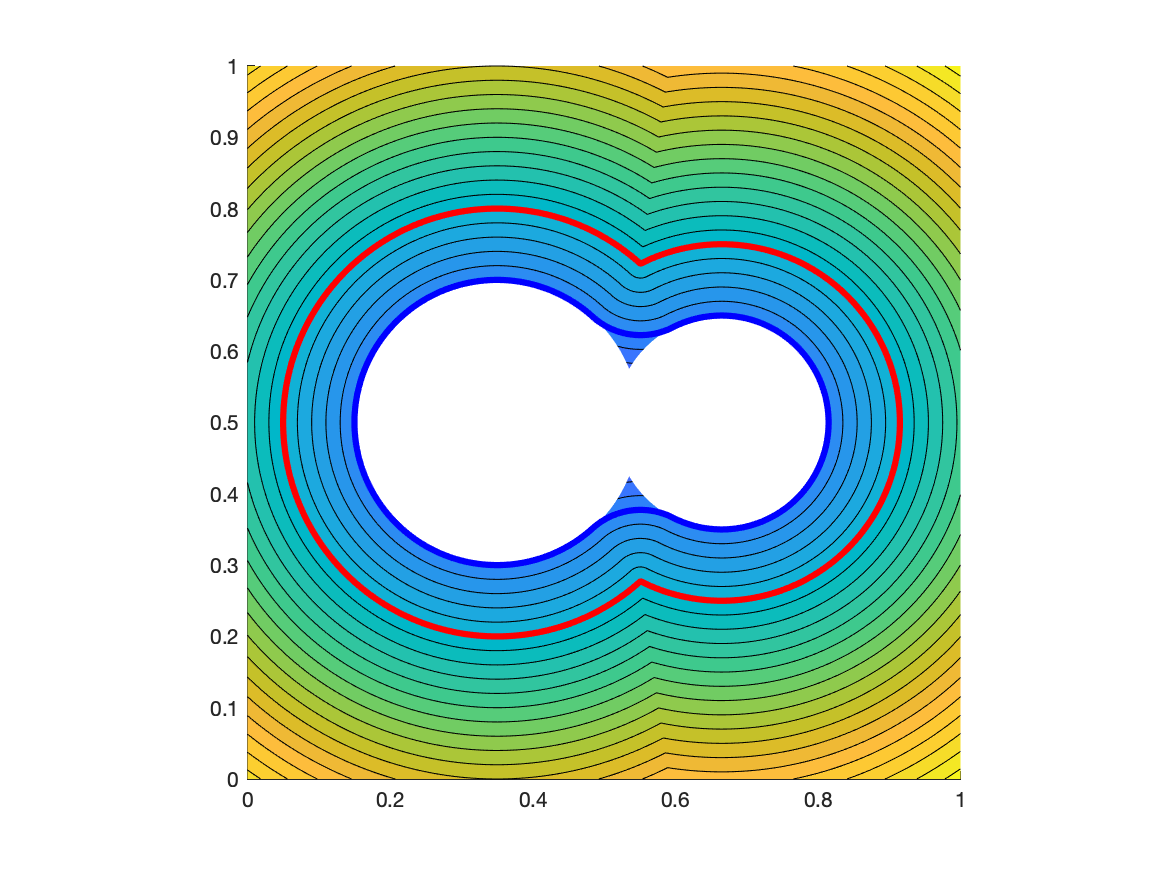}
	\includegraphics[trim=60pt 20pt 70pt 50pt, clip, scale=0.25]{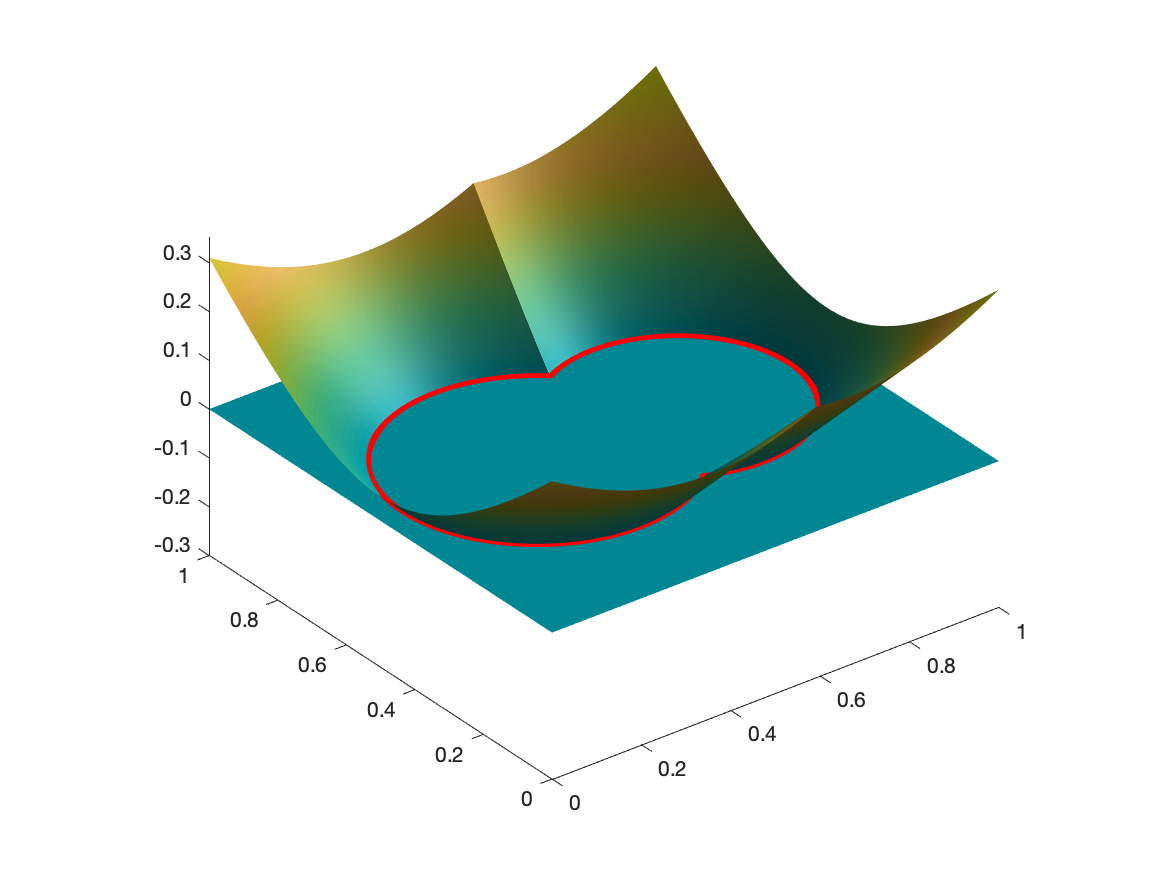}
	\includegraphics[trim=60pt 20pt 70pt 50pt, clip, scale=0.25]{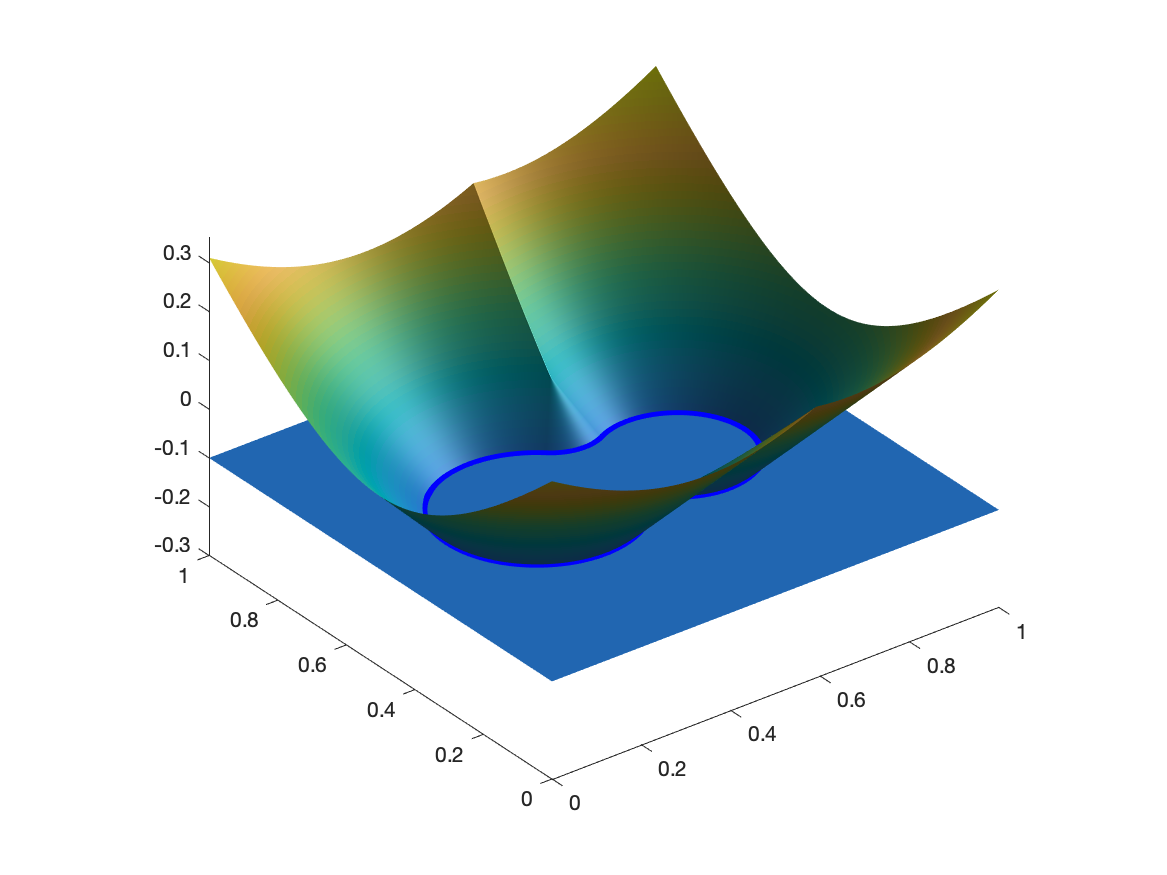}
\caption{Illustration of the signed distance function to the SAS and its level sets that define the SAS (red) and the SES (blue). The original domain $\Omega^M$ is illustrated in white. The distance between the two isolines is the so-called ``probe radius''.}
	\label{fig:SAS}
\end{figure}

\rev{For a given  (probe) radius $\rp>0$ the precise definition of the SAS is as follows. We introduce} 
\begin{align*}
  \Omega_{i,{\rm sas}}(\rp) &:= B(m_i;R_i+\rp) = \{ \, x \in \Bbb{R}^3~|~\|x-m_i\| < R_i + \rp \,\} \\
  \Omega^M_{\rm sas}(\rp) & := \cup_{i=1}^M \Omega_{i,{\rm sas}}(\rp),
\end{align*}
and define ${\rm SAS}:={\rm SAS}(\rp):=\partial  \Omega^M_{\rm sas}(\rp)$. Let $\fsas$ be the signed distance function to SAS (positive in $\Omega^M_{\rm sas}$), hence, ${\rm SAS}=\fsas^{-1}(0)$.  We define
\[
	{\rm SES}:={\rm SES}(\rp):=\fsas^{-1}(\rp) \subset \Omega^{M,c}.
\]
We refer to Figure~\ref{fig:SAS} for a graphical illustration of the definition of the SES and the SAS.
Further, we denote the maximal distance to the  $\partial \Omega^M$ by
\[
  d_{\Omega^M}:= \max_{x \in \Omega^M} {\rm dist}(x, \partial \Omega^M).
\]
A property of the SAS-SES construction is that the balls with center on ${\rm SAS}(\rp)$ and radius $r_p$ (these are tangent to ${\rm SES}(r_p)$)  are completely contained in $\Omega^{M,c}$. We will use these balls  in the construction of balls $B\big(b(x);d(x)\big)$, $x \in \Omega^M$, $b(x)\in \Omega^{M,c}$, $d(x):=\|x-b(x)\|$ that have ``sufficient exterior volume''. To determine a suitable $r_p$ we use $r_p=\lambda d_{\Omega^M}$, with $\lambda >0$ a parameter that will be specified below. We now explain this construction. 
A closest point projection on ${\rm SES}_\lambda:={\rm SES}(\lambda d_{\Omega^M})$ and the corresponding distance are denoted by
\[
  b_\lambda(x) \in  \arg \min_{y\in {\rm SES}_\lambda} \| x-y\|, 
  \qquad 
  d_\lambda(x) := \|b_\lambda(x)-x\|,
  \qquad 
  x \in \Omega^M. 
\]
The maximum distance to ${\rm SES}_\lambda$ is denoted by $d_{\rm SES}(\lambda):= \max_{x \in \Omega^M} d_\lambda(x)$.
Note that the following properties hold
\begin{equation}
	\label{resd1}
	\begin{aligned}[c]
	\lim_{\lambda \to 0^+}d_\lambda(x) &= {\rm dist}(x,\partial \Omega^M), \\
	\lim_{\lambda \to 0^+}d_{\rm SES}(\lambda) &=d_{\Omega^M}, 
	\end{aligned}
	\begin{aligned}[c]
	\quad 
	\lim_{\lambda \to\infty}d_\lambda (x) &= {\rm dist}(x,\partial\mbox{conv}(\Omega^M)), \\
	\quad 
	\lim_{\lambda \to\infty}d_{\rm SES}(\lambda) &= \max_{x \in \Omega^M} {\rm dist}(x,\partial\mbox{conv}(\Omega^M)).  
	\end{aligned}
\end{equation}
Define for all $x\in\Omega^M$, $B_0(x) := B(\hat b(x);\hat r(x))$,
with $\hat r(x): = \min(\lambda d_{\Omega^M}, \frac12 d_\lambda(x))$ and $\hat b(x) := b_\lambda(x) +\hat r(x) \frac{ b_\lambda(x)-x}{\|b_\lambda(x)-x\|}$.  

First, note that by the construction of the ${\rm SES}_\lambda$, there holds 
$
	B(\hat p(x) ;\lambda d_{\Omega^M})\subset\Omega^{M,c},
$
with $\hat p(x) := b_\lambda(x) + \lambda d_{\Omega^M}\frac{ b_\lambda(x)-x}{\|b_\lambda(x)-x\|} $,
i.e. this corresponds to the  \rev{definition of the SES of ``rolling a ball with  radius $r_p=\lambda \, d_{\Omega^M}$ and center on the SAS''. 
The point $\hat p(x) \in\,$SAS denotes this center of the ball.} 
See Figure~\ref{fig:ExtFat} for an illustration and~\cite{Quan} for further explanations.
Since $\hat r(x) \le \lambda d_{\Omega^M}$, there holds that for all $z\in B_0(x)$
\[
	\| z - \hat p(x) \| \le \| z - \hat b(x) \| + \| \hat b(x) - \hat p(x) \| 
	< \hat r(x) + (\lambda d_{\Omega^M} - \hat r(x)) = \lambda d_{\Omega^M},
\]
and thus
\[
	B_0(x)\subset B\left(\hat p(x);\lambda d_{\Omega^M}\right)\subset\Omega^{M,c}.
\]
Second, for any $z\in B_0(x)$, there holds
\[
	\| z - b_\lambda(x) \| \le \| z - \hat b(x) \| + \| \hat b(x) - b_\lambda(x) \| 
	< 2 \hat r(x) \le  d_\lambda(x),
\]
and thus 
\[	
	B_0(x)\subset B(b_\lambda(x);d_\lambda(x))=:B(x).
\]
Hence, for $x \in\Omega^M$ we have
\begin{align*}
	 \frac{|B(x)|}{|B_0(x)|}
 	=
  	\frac{d_\lambda(x)^3}{\hat r(x)^3} 
    =
  	8 \max\left\{ \left(\frac{d_\lambda(x)}{2\lambda d_{\Omega^M}}\right)^3, 1\right\} 
  	\leq  
  	8 \max\left\{ \left(\frac{d_{\rm SES}(\lambda)}{2 \lambda d_{\Omega^M}}\right)^3, 1\right\}.
\end{align*}
We define 
\begin{equation} \label{defGammal}
	\gamma_\lambda 
	=\frac18\min\left\{ \left(\frac{2\lambda d_{\Omega^M}}{d_{\rm SES}(\lambda)}\right)^3, 1\right\},
\end{equation}
and thus the exterior fatness estimate $|B_0(x)| \geq \gamma_\lambda |B(x)|$ for all $x \in \Omega^M$ holds. In the Hardy estimates used below (cf. \eqref{Hardy1}) we are interested in (posssibly small) bounds of the quotient $d_{\rm SES}(\lambda)/\gamma_\lambda$. This motivates the introduction of the following indicator:
\begin{indicator}[Global exterior fatness indicator] \label{ind46} 
\rev{Assume that} 
\begin{equation}
	\label{indDf1}
 	\lambda_{\min}:=
 	\argmin_{\lambda >0} \frac{ d_{\rm SES}(\lambda)}{\gamma_\lambda}
\end{equation}
\rev{exists, cf. Remark~\ref{rmexist}.} 
Define the corresponding {\rm global} fatness indicator
\begin{equation} 
	\label{indDf}
 d_F:= \frac{d_{\rm SES}(\lambda_{\min})}{\gamma_F}, \quad \text{with}~\gamma_F := \gamma_{\lambda_{\min}}.
\end{equation}

It follows that for any $x \in \Omega^M$, there exists a corresponding point $b(x) \in \Omega^{M,c}$ such that
 \begin{equation}
 	\label{eq:GlobFatness}
 	\big|B(b(x);\|b(x)-x\|) \cap \Omega^{M,c}\big| \geq \gamma_F  \big|B(b(x);\|b(x)-x\|)\big|.
 \end{equation}
\end{indicator}
\begin{remark} \label{rmexist} \rm The function $ \lambda \to  q(\lambda):=\frac{ d_{\rm SES}(\lambda)}{\gamma_\lambda}$ in \eqref{indDf1} is not necessarily continuous. Discontinuities can appear, for example, for $\lambda$ values at which holes in $\Omega_{\rm sas}^M(\lambda\, d_{\Omega^M})$ ``disappear''. \rev{In the generic case, however,  a (possibly non-unique) minimizer $\lambda_{\min}$ as in \eqref{indDf1} exists. If this is not the case, we choose  a $\lambda_{\min}$ value such that $d_F$ as in \eqref{indDf} is close to $ \inf_{\lambda >0} \frac{ d_{\rm SES}(\lambda)}{\gamma_\lambda} >0$.}
 
\end{remark}

In the literature, e.g. \cite{Kinnunen}, a property as in \eqref{eq:GlobFatness} is called a uniform (exterior) fatness property of the corresponding domain, and this notion is related to that of variational $2$-capacity, hence the name that we have chosen. 
We continue with a short discussion of this global indicator. 
\begin{description}
[leftmargin=20pt,style=nextline,itemsep=5pt,font=\mdseries,topsep=3pt]
\item[1)] 
It is clear from its definition that $d_F$ is  a global indicator and that it has the following upper and lower bounds
\begin{align*}
	d_F
	&\ge
	8 d_{\Omega^M} = 8 \max_{x \in\Omega^M} {\rm dist}(x, \partial \Omega^M), \\
	d_F &\le\lim_{\lambda\to \infty}   \frac{d_{\rm SES}(\lambda)}{\gamma_\lambda}  = 
	8\max_{x\in\Omega^M}
	{\rm dist}\left(x,\partial\mbox{conv}(\Omega^M)\right). 
\end{align*}
The quantity $d_F$ can be seen as a measure for the globularity of the domain $\Omega^M$ that involves the maximal distance to a ${\rm SES}$  ($d_{\rm SES}(\lambda_{\min})$) and the maximal distance  to the boundary  $\partial \Omega^M$ ($d_{\Omega^M}$).
\item[2)]
In the particular case where $\Omega^M$ consists of a linear chain of overlapping uniform balls, with radius $R$, of length $M$, we have $d_{\Omega^M}=R$, $d_{\rm SES}(\lambda)=R$, and thus $d_F= 8 R$, i.e., $d_F$ is independent of $M$. 
On the other hand, when considering a geometry of uniform balls whose centers lie on the unit grid $[1,L]^3$ with radius $=R>\sqrt{3}$ (in order that no inner holes appear), we obtain \rev{$d_{\Omega^M}=d_{\rm SES}(\lambda)=\frac12(L-1)+R$, hence, $d_F= 4(L-1)+8R$.} In this case $d_F$ is proportional to $L=M^{1/3}$ as $L$ increases.
\item[3)] 
Another consequence of the construction above  is that the entire cavity $\Omega^M$ can be covered by balls $B(y;R_y)$ with centers  $y$ on the ${\rm SES}_{\lambda_{\min}} ={\rm SES}(\lambda_{\min} d_{\Omega^M}) \subset \Omega^{M,c}$, radii $R_y \leq d_{\rm SES}(\lambda_{\min})$ and each of these balls contains a smaller ball of radius $\min \{\lambda_{\min} d_{\Omega^M},\frac12 R_y\}$ that lies entirely in $\Omega^{M,c}$.
\end{description}

\begin{figure}[t!]
	\centering
	\includegraphics[trim=0pt 140pt 0pt 140pt, clip, scale=0.5]{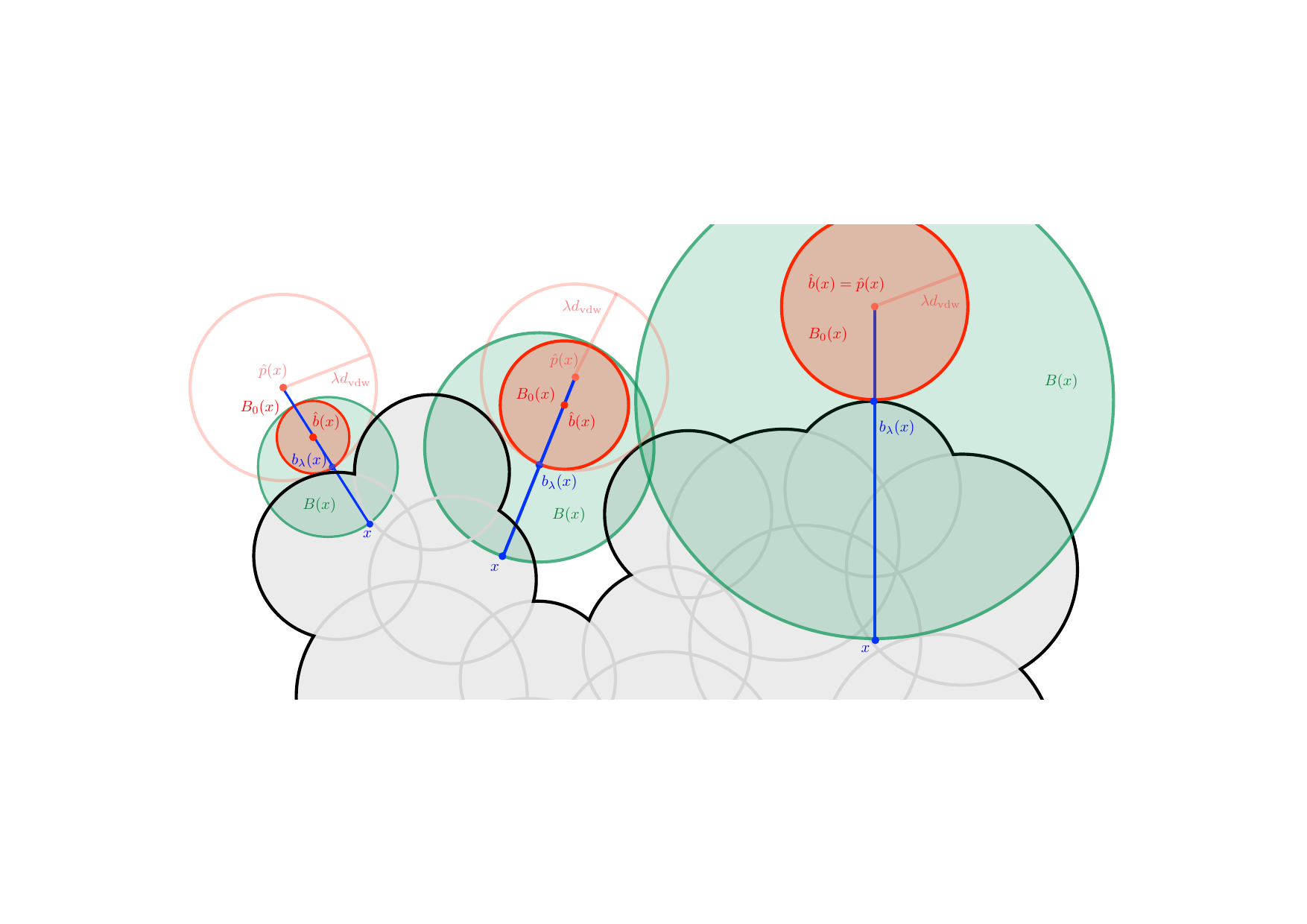}
\caption{Illustration of the geometrical setup to define the global fatness property.}
	\label{fig:ExtFat}
\end{figure}

\subsection{Partition of unity} \label{sectPU}
In this section we introduce a (natural) partition of unity, based on the local distance functions $\delta_i$, $i \in \cI$, and derive smoothness properties for the elements in this partion of unity. In the bounds for derivatives that are proven below only local geometry indicators from $\cG_L^M$ are involved.

%
%

Note that since $\delta_i$ is the distance function to the boundary in $\Omega_i$ we have
\begin{equation} \label{eq1}
  \|\nabla \delta_i(x)\| =1, ~~x \in \Omega_i, ~~\|\nabla \delta_i(x)\|=0, ~~x \notin \bar \Omega_i,
\end{equation}
One easily checks that $\delta(x)= \sum_{i=1}^M \delta_i =0$ if and only if $x \in \partial \Omega^M$. 
We further define
\[
	\theta_i(x):= \frac{\delta_i(x)}{\delta(x)}, \quad x \in \Omega^M.
\]
The system $(\theta_i)_{1 \leq i \leq M}$ forms a partition of unity (PU) of $\Omega^M$ subordinate to the cover $(\Omega_i)_{1\leq i\leq M}$:
\begin{align*}
   0 \leq \theta_i \leq 1 &~~(1 \leq i \leq M) \quad \text{on}~~\Omega^M,\\
  \sum_{i=1}^M \theta_i=1 & \quad \text{on}~~\Omega^M,\\
   {\rm supp}\, \theta_i \subset \bar \Omega_i & ~~(1 \leq i \leq M).
\end{align*}

\begin{remark} \label{remsmoothness} \rm 
 The functions $\theta_i$ are in general not smooth.  In  Figure~\ref{fig:Geom} a two-dimensional case is illustrated, in which $\Omega^M$ consists  of three intersecting disks  (in this case we have $\cI_{\intset}=\emptyset$). As a further, more precise, illustration we consider the three-dimensional case of two overlapping balls  $\Omega_1=B\big((-1,0,0);2\big)$, $\Omega_2=B\big((1,0,0);2\big)$, with $\Omega^2:=\Omega_1 \cup \Omega_2$. We thus have $\theta_i=\frac{\delta_i}{\delta}$, $i=1,2$, $\delta=\delta_1+\delta_2$. 
 
The intersection of $\partial \Omega^2$ with $\overline{\Omega_1 \cap \Omega_2}$ is  the circle $S=\left\{ (0,x_2,x_3)\; \middle| \; x_2^2+x_3^2=3\right\}$. The functions $\theta_i$ do \emph{not} have a 
 continuous  extension to the intersection circle $S$. Take an accumulation point $x \in S$  and a sequence $(x_n)_{n \in \Bbb{N}} \subset \Omega_1 \setminus \bar \Omega_2$ with $\lim_{n \to \infty} x_n=x$. We then have $\lim_{n\to \infty} \theta_1(x_n)=1$.  On the other hand,  we can take a sequence $(x_n)_{n \in \Bbb{N}} \subset (\Omega_1 \cap \Omega_2)$ with $\lim_{n \to \infty} x_n=x$ and $\delta_1(x_n) \leq \delta_2(x_n)$, which  implies $\limsup_{n\to \infty} \theta_1(x_n) \leq \frac12$. \black
Furthermore, elementary calculation yields that $\theta_i \notin H^1(\Omega_i)$. The key steps for the derivation of this result are outlined in  Appendix~\ref{App2}.

\rev{Related to this we note that the PU introduced above differs from the ones that are typically used in the analysis of domain decomposition methods, cf. \cite[Section 3.2]{Toselli}. The reason is that the overlapping covering $(\Omega_i)_{1 \leq i \leq M}$ does not satisfy the key assumption, cf. \cite[Assumption 3.1]{Toselli}, that for arbitrary $i$ and $x \in \Omega_i$, there exists $\tilde\delta_i >0$ and a $j$ such that ${\rm dist}(x, \partial \Omega_j \setminus \partial \Omega^M) \geq \tilde\delta_i$ holds.}
\end{remark}
%

\begin{lemma} \label{lemPU}
 The PU has the following properties:
 \begin{align} 
  \|\nabla\theta_i\|_{\infty,\Omega_i} 
  & \leq 
  \frac{\Nzero}{\gint}, \quad  \text{for all}~~i \in \cI_{\intset}, \label{PU2} 
  \\
  \|\nabla\theta_i(x)\| 
  & \leq
   \frac{\Nzero}{\gb \,{\rm dist}(x,\partial \Omega^M)},~~\text{for all}~~i \in \cI_{\bset},~x \in \Omega_i. \label{PU3}
 \end{align}
\end{lemma}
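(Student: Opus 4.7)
The plan is to differentiate $\theta_i=\delta_i/\delta$ via the quotient rule and then rewrite the result in a form where only \emph{pairs} of indices and the local overlap counter $\Nzero$ appear; finally one inserts the lower bounds on $\delta$ from Indicator~\ref{indintballs} and Lemma~\ref{lem:DistEq}. Since the $\delta_j$ are only Lipschitz, all gradient computations are understood almost everywhere, which is sufficient for $L^\infty$ bounds.

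First I would compute, for a.e.\ $x\in\Omega_i$,
\[
\delta(x)^2\,\nabla\theta_i(x)=\delta(x)\,\nabla\delta_i(x)-\delta_i(x)\,\nabla\delta(x)
=\sum_{j=1}^M\bigl(\delta_j(x)\,\nabla\delta_i(x)-\delta_i(x)\,\nabla\delta_j(x)\bigr).
\]
The $j=i$ term vanishes identically, and for $j$ such that $x\notin\overline\Omega_j$ we have $\delta_j(x)=0$ and $\nabla\delta_j(x)=0$ by \eqref{eq1}, so only indices $j\in\cN_i^0$ with $x\in\Omega_j$ contribute. Let me denote this effective index set at $x$ by $J(x)=\{j\neq i:x\in\Omega_j\}$; by definition of $\Nzero$ we have $|J(x)|\le \Nzero-1$.

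Next I would apply the triangle inequality together with $\|\nabla\delta_i\|=\|\nabla\delta_j\|\le 1$ on their respective balls (cf.\ \eqref{eq1}):
\[
\delta(x)^2\,\|\nabla\theta_i(x)\|
\le\sum_{j\in J(x)}\bigl(\delta_j(x)+\delta_i(x)\bigr)
=\Bigl(\sum_{j\in J(x)}\delta_j(x)\Bigr)+|J(x)|\,\delta_i(x).
\]
The first sum equals $\delta(x)-\delta_i(x)$, and using $\delta_i(x)\le\delta(x)$ together with $|J(x)|\le\Nzero-1$ yields
\[
\delta(x)^2\,\|\nabla\theta_i(x)\|\le \delta(x)+(\Nzero-2)\,\delta_i(x)\le (\Nzero-1)\,\delta(x),
\]
hence the pointwise key bound $\|\nabla\theta_i(x)\|\le \Nzero/\delta(x)$ for a.e.\ $x\in\Omega_i$ (it is $0$ automatically when $J(x)=\emptyset$).

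Finally I would close the estimate by inserting the appropriate lower bounds on $\delta$. For $i\in\cI_{\intset}$, the inequality \eqref{overlap1} in Indicator~\ref{indintballs} gives $\delta(x)\ge\gint$ uniformly on $\Omega_i\subset\Omega_{\intset}$, producing \eqref{PU2}. For $i\in\cI_{\bset}$, since $\Omega_i\subset\Omega_{\bset}$, Lemma~\ref{lem:DistEq} yields $\delta(x)\ge\gb\,{\rm dist}(x,\partial\Omega^M)$, which when substituted gives \eqref{PU3}. The main obstacle is the algebraic rearrangement in the first paragraph: one must write the numerator in a form that exposes the cancellation of the $j=i$ term and lets the overlap counter $\Nzero$ appear linearly rather than through the naive bound $\|\nabla\delta\|\le\Nzero$, which would give a less tight constant.
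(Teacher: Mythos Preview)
Your proof is correct and follows essentially the same route as the paper: compute $\nabla\theta_i$ via the quotient rule, obtain the pointwise key bound $\|\nabla\theta_i(x)\|\le \Nzero/\delta(x)$ a.e.\ on $\Omega_i$, and then insert \eqref{overlap1} respectively \eqref{overlap3}. The only difference is cosmetic: the paper splits $\nabla\theta_i=\frac{\nabla\delta_i}{\delta}(1-\theta_i)-\theta_i\frac{\sum_{j\neq i}\nabla\delta_j}{\delta}$ and bounds the two terms by $1/\delta$ and $(\Nzero-1)/\delta$, whereas you organize the numerator as a sum of antisymmetric pairs $\delta_j\nabla\delta_i-\delta_i\nabla\delta_j$; both arrive at the identical intermediate estimate \eqref{eq2}.
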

\begin{proof}
 Note that
 \[ \begin{split}
  \nabla \theta_i(x) & = \frac{\nabla \delta_i(x)}{\delta(x)} -  \frac{ \delta_i(x)}{\delta(x)}\cdot 
 \frac{\sum_{j=1}^M \nabla \delta_j(x)}{\delta(x)}, \\ 
\blue     & = \frac{\nabla \delta_i(x)}{\delta(x)} \left( 1-   \frac{ \delta_i(x)}{\delta(x)}\right) -  \frac{ \delta_i(x)}{\delta(x)}\cdot 
 \frac{\sum_{j\neq i} \nabla \delta_j(x)}{\delta(x)}\quad \text{a.e. on}~~\Omega^M. 
\end{split} \] 
\black Hence, cf. \eqref{eq1} and \eqref{card},
\begin{equation} \label{eq2}
  \|\nabla \theta_i(x)\|
   \leq 
   \frac{1}{\delta(x)}
   +
 \frac{\sum_{j\neq i} \|\nabla \delta_j(x)\|}{\delta(x)} \leq \frac{\Nzero}{\delta(x)}, \quad \text{a.e. on}~~\Omega^M. 
\end{equation}
For $i \in \cI_{\intset}$ we use \eqref{overlap1} and obtain the result \eqref{PU2}. 
For $i \in \cI_{\bset}$ we use \eqref{overlap3}, which yields the result \eqref{PU3}.
 \end{proof}
\ \\[1ex]
The results in \eqref{PU2}, \eqref{PU3} show that away from the boundary (i.e., in the subdomain $\Omega_{\intset}$) the partition of unity functions $\theta_i$ are in $W^{1,\infty}$, and towards the boundary the singularity of the gradient of these functions can be controlled by the distance function to the boundary.

\begin{figure}[t!]
	\centering
	\includegraphics[trim=60pt 40pt 70pt 100pt, clip, scale=0.25]{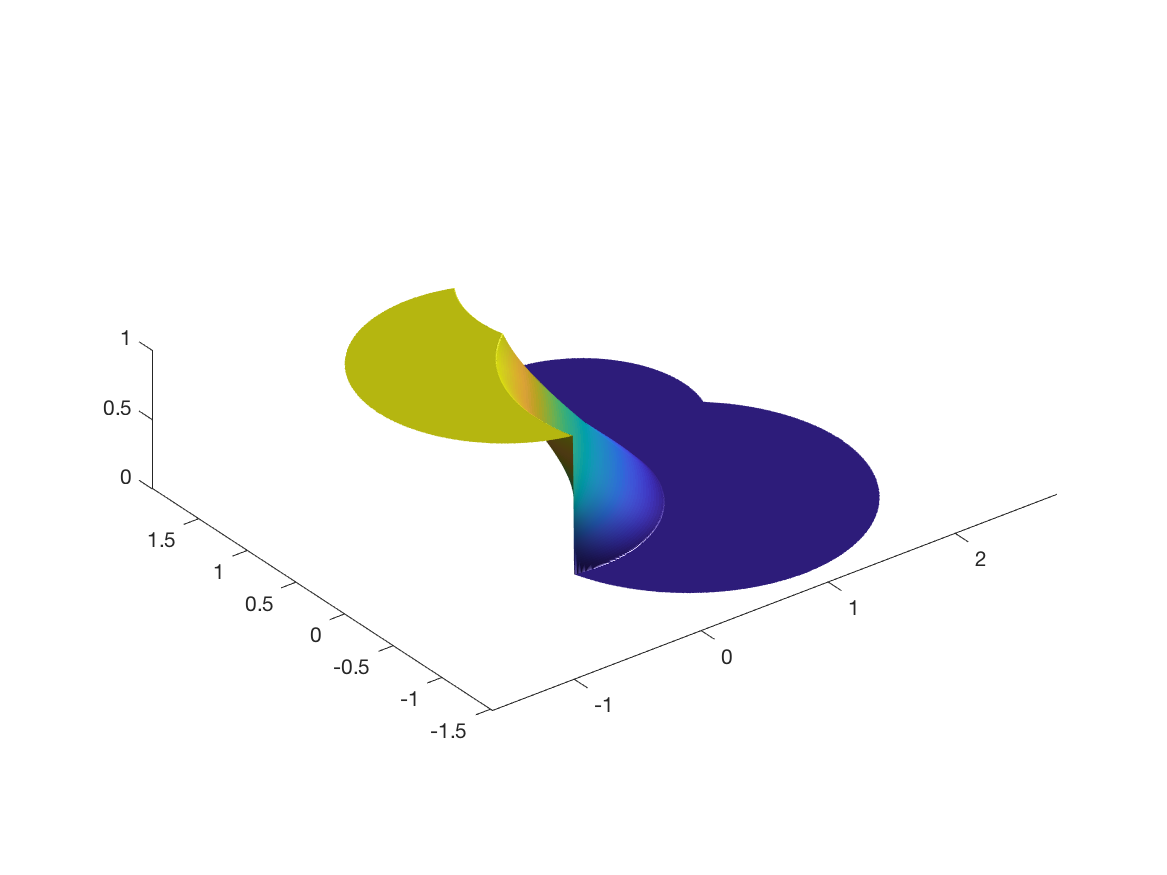}
	\includegraphics[trim=60pt 40pt 70pt 100pt, clip, scale=0.25]{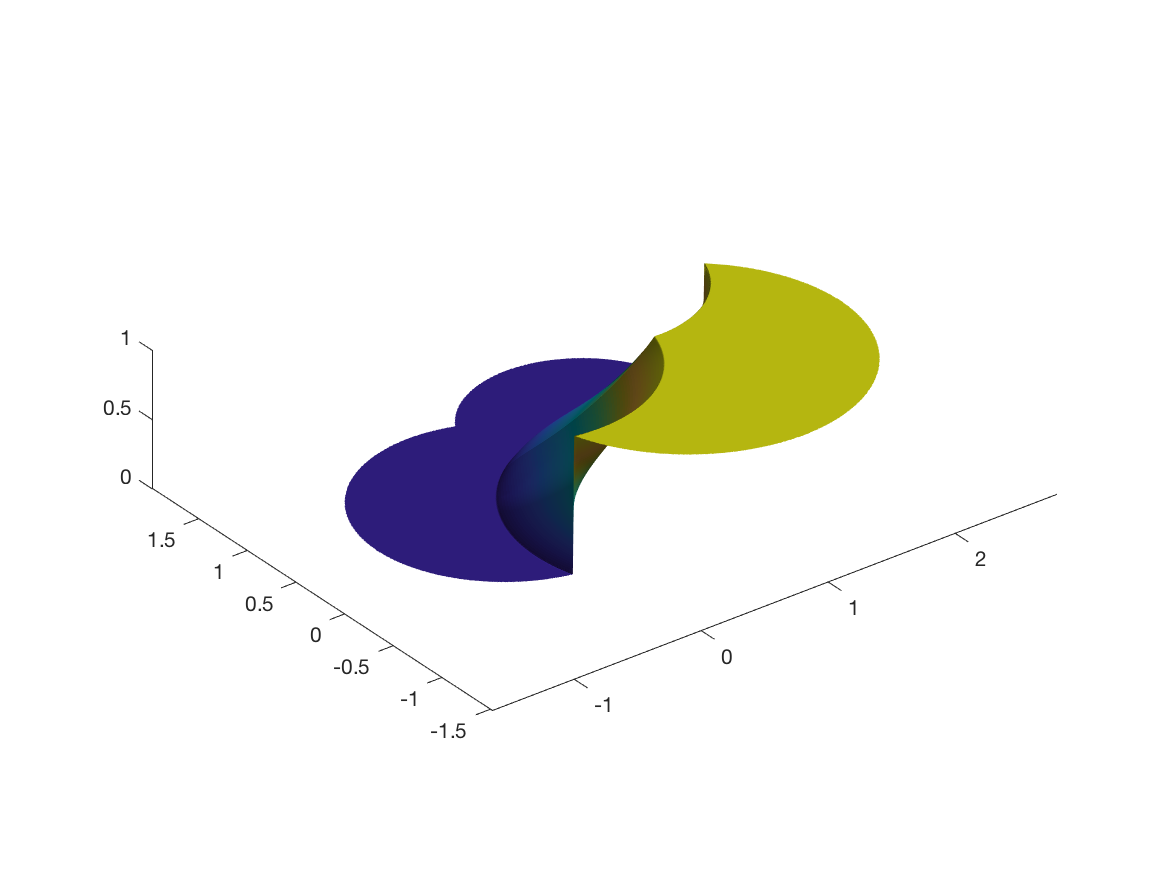}
	\includegraphics[trim=60pt 40pt 70pt 100pt, clip, scale=0.25]{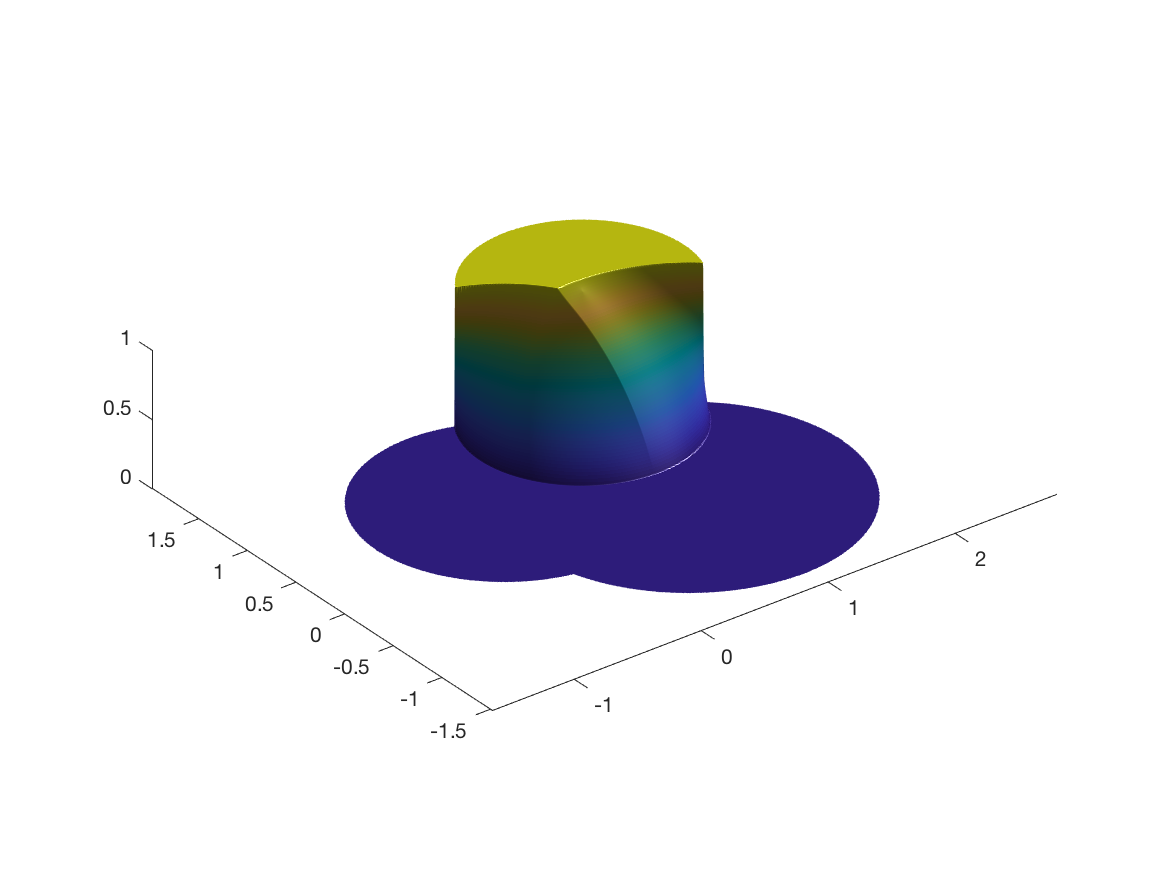}
\caption{Surface plot of $\theta_i$ for a configuration with 3 disks in two dimensions.}
	\label{fig:Geom}
\end{figure}

\section{Analysis of the Schwarz domain decomposition method} \label{SectAnalysis}
\subsection{Pointwise Hardy inequality} \label{sectHardy}
In the analysis we need a specific pointwise Hardy inequality, similar to the one derived in e.g. \cite{Hajlasz,Kinnunen}. We introduce some notation. For $f \in L^1(\Bbb{R}^3)$, $B \subset \Bbb{R}^3$ we denote the average by $f_B:= \frac{1}{|B|}\int_B f(y) \, dy$, and the \emph{maximal function} (cf. \cite{Stein}) by
\[
  \cM(f)(x):= \sup_{r >0} \frac{1}{|B(x;r)|} \int_{B(x;r)} |f(y)| \, dy.
\]
A key property is the following (\cite{Stein}, Theorem 1): for all $f \in L^2(\Bbb{R}^3)$:
\begin{equation} \label{estimatemaximal}
 \|\cM f\|_{L^2(\Bbb{R}^3)} \leq c_\cM \, \|f\|_{L^2(\Bbb{R}^3)}, \quad \text{with}~~c_\cM:=10 \sqrt{10}.
\end{equation}
In particular, if $f$ is only supported in  $\Omega^M$, then the right hand side in \eqref{estimatemaximal} reduces to $\|f\|_{L^2(\Omega^M)}$.

\begin{lemma} \label{LemHardy}
Consider a point $x \in \Omega^M$ and a corresponding (arbitrary) {\rm exterior} point denoted by $b(x) \in \Omega^{M,c}$. Define $d(x):=\|x-b(x)\|$. Let $\gamma(x) >0$ be such that
 \begin{equation} \label{condHardy}
   |B(b(x);d(x)) \cap \Omega^{M,c}| \geq \gamma(x) \, |B(b(x);d(x))|.
 \end{equation}
There exists a constant $c_\cH>0$, independent of $x$ and any parameter, such that the following holds for all $u \in C_0^\infty(\Omega^M)$:
\begin{equation} \label{Hardy1}
 |u(x)| \leq c_\cH\frac{d(x)}{\gamma(x)}  \cM(\|\nabla u\|)(x).
\end{equation}
\end{lemma}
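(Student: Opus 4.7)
The plan is to follow the classical chain-of-balls Hardy argument from \cite{Hajlasz,Kinnunen}, adapted to the ``off-center'' hypothesis \eqref{condHardy}. First I would extend $u$ by zero outside $\Omega^M$; then $u$ vanishes on $E:=B^*\cap\Omega^{M,c}$, where $B^*:=B(b(x);d(x))$, and by hypothesis $|E|\ge\gamma(x)|B^*|$. I would then split the proof into three elementary pieces and add them: (i) a boundary-vanishing estimate on $|u_{B^*}|$ that exploits the fatness condition; (ii) a transfer estimate on $|u_{B_0}-u_{B^*}|$ where $B_0:=B(x;d(x))$; and (iii) an interior dyadic-chain estimate on $|u(x)-u_{B_0}|$ centered at~$x$.

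For (i), since $u\equiv 0$ on $E$, splitting $B^*=E\cup(B^*\setminus E)$ and adding/subtracting $u_{B^*}$ gives $|u_{B^*}|\cdot|E|\le\int_{B^*}|u-u_{B^*}|\,dy$, so the $L^1$ Poincar\'e--Wirtinger inequality on the ball $B^*$ of radius $d(x)$ yields $|u_{B^*}|\le\tfrac{C\,d(x)}{\gamma(x)\,|B^*|}\int_{B^*}\|\nabla u\|\,dy$. Since $\|b(x)-x\|=d(x)$, the enlargement $\tilde B:=B(x;2d(x))$ contains $B^*$ with $|\tilde B|=8|B^*|$, hence the last average is bounded by $8\,\cM(\|\nabla u\|)(x)$ and therefore $|u_{B^*}|\le\tfrac{C\,d(x)}{\gamma(x)}\cM(\|\nabla u\|)(x)$. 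For (iii) I would set $B_k:=B(x;2^{-k}d(x))$; since $u$ is continuous one has $u(x)=\lim_k u_{B_k}$, and telescoping with Poincar\'e on each $B_k$ (using $B_{k+1}\subset B_k$ with volume ratio~$8$) gives $|u_{B_{k+1}}-u_{B_k}|\le C\cdot 2^{-k}d(x)\,\cM(\|\nabla u\|)(x)$, whose geometric sum is $|u(x)-u_{B_0}|\le C\,d(x)\,\cM(\|\nabla u\|)(x)$. Estimate (ii) is handled the same way: both $B_0$ and $B^*$ lie in $\tilde B$ with comparable volumes, so comparing each average to $u_{\tilde B}$ via Poincar\'e on $\tilde B$ contributes another $C\,d(x)\,\cM(\|\nabla u\|)(x)$.

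Adding (i), (ii), (iii) and absorbing constants (using $\gamma(x)\le 1$) yields \eqref{Hardy1}. The only delicate point is that $\cM(\|\nabla u\|)(x)$ is anchored at $x$, so whenever an average appears it must be dominated by an average over a ball centered at $x$ with controlled volume ratio; the off-center ball $B^*=B(b(x);d(x))$ is the only tricky case, and the passage to $\tilde B=B(x;2d(x))$---which contains $B^*$ precisely because $\|b(x)-x\|=d(x)$---resolves this uniformly at the cost of a universal factor, absorbed into~$c_\cH$.
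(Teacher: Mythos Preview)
Your argument is correct, but it follows a genuinely different route from the paper's proof. The paper does not use the chain-of-balls/telescoping-averages approach at all. Instead it relies on the potential representation from \cite{GT} (Lemma~7.16), which gives
\[
  |u(x)-u_B|\le \ssc_1\int_B\frac{\|\nabla u(z)\|}{\|x-z\|^2}\,dz,\qquad
  |u(y)-u_B|\le \ssc_1\int_B\frac{\|\nabla u(z)\|}{\|y-z\|^2}\,dz,
\]
with $B=B(b(x);d(x))$ and $y\in B\cap\Omega^{M,c}$ (so $u(y)=0$). The $y$-dependent term is removed by averaging over $y\in B\cap\Omega^{M,c}$ and applying an elementary kernel bound (Lemma~3.11.3 in \cite{Ziemer}), which is where the factor $\gamma(x)^{-1}$ enters. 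Finally Hedberg's lemma converts the truncated Riesz potential $\int_{B(x;2d(x))}\|\nabla u(z)\|\,\|x-z\|^{-2}\,dz$ into $d(x)\,\cM(\|\nabla u\|)(x)$.

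Your approach replaces this machinery by three ingredients that are arguably more elementary: $L^1$ Poincar\'e on balls, the volume-ratio comparison $B^*\subset B(x;2d(x))$, and a dyadic telescoping chain at $x$. This avoids both the Gilbarg--Trudinger representation and Hedberg's lemma. The paper's approach, by contrast, tracks explicit numerical constants ($\ssc_1=2/\pi$, $\ssc_2\le 14$, $\ssc_3\le 4/\ln 2$), which your telescoping argument could also do but does not. Note that the references \cite{Hajlasz,Kinnunen} you cite actually use the potential approach the paper follows, not the chain-of-balls method; the latter is more closely associated with the metric-space Poincar\'e literature. Either way, both arguments are complete and yield \eqref{Hardy1} with an absolute constant~$c_\cH$.
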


%

\begin{proof} 
Take $u \in C_0^\infty(\Omega^M)$, extended by zero outside $\Omega^M$. 
Given  $x \in \Omega^M$ and a (fixed) $b(x)\in \Omega^{M,c}$, denote $B:=B(b(x);d(x))$, and $u_B$ the average of $u$ over $B$. 
Further, for this choice of $x$, take arbitrary $y \in B\cap \Omega^{M,c}$. 
Using Lemma 7.16  from \cite{GT} we obtain
\begin{equation} \label{Eq2}
 \begin{split}
  |u(x)| & = |u(x)-u(y)| \leq |u(x)-u_B| +|u(y)-u_B| \\
  & \leq \ssc_1\left( \int_B \frac{\|\nabla u(z)\|}{\|x-z\|^2} \, dz + \int_B \frac{\|\nabla u(z)\|}{\|y-z\|^2} \, dz \right),
 \end{split}
\end{equation}
with $\ssc_1= \frac{2}{\pi}$. We recall the elementary inequality (e.g., Lemma 3.11.3 in \cite{Ziemer}):
\[
 \frac{1}{|B(x_0;r)|} \int_{B(x_0;r)} \frac{1}{\|y-z\|^2} \, dy \leq \ssc_2 \frac{1}{\|x_0-z\|^2} \quad \text{for all}~~r >0, ~x_0,z \in \Bbb{R}^3.
\]
(Inspection of the proof of Lemma 3.11.3 in \cite{Ziemer} yields $\ssc_2 \leq 14$).
Using this, the definition of $\gamma(x)$ and that $B\cap \Omega^{M,c} \subset B  \subset B(x;2d(x))$ we obtain:
\begin{align*}
 \inf_{y \in B\cap \Omega^{M,c}} \int_B \frac{\|\nabla u(z)\|}{\|y-z\|^2} \, dz  \leq & \frac{1}{|B\cap \Omega^{M,c}|} \int_{B\cap \Omega^{M,c}} \int_B  \frac{\|\nabla u(z)\|}{\|y-z\|^2} \, dz \, dy \\
 \leq & \frac{1}{\gamma(x)} \int_B \frac{1}{|B|}\int_{B \cap \Omega^{M,c}} \frac{1}{\|y-z\|^2} \, dy \, \|\nabla u(z)\| \, dz \\
 \leq & \frac{8}{\gamma(x)} \int_B \frac{1}{|B(x;2d(x))|}\int_{B(x;2d(x)) } \frac{1}{\|y-z\|^2} \, dy \, \|\nabla u(z)\| \, dz \\
 \leq &  \frac{8 \, \ssc_2 }{\gamma(x)} \int_B\frac{\|\nabla u(z)\|}{\|x-z\|^2} \, dz.
\end{align*}
Using this in \eqref{Eq2} and $\gamma(x) \leq 1$, we get
\[
 |u(x)| \leq \ssc_1\left(1+\frac{8 \, \ssc_2 }{\gamma(x)}\right)\int_B \frac{\|\nabla u(z)\|}{\|x-z\|^2} \, dz \leq \frac{ \ssc_1(1+8 \ssc_2)}{\gamma(x)} \int_{B(x;2d(x))} \frac{\|\nabla u(z)\|}{\|x-z\|^2} \, dz.
\]
Finally we use the following estimate (\cite{Hedberg}, Lemma 1):
\[
 \int_{B(x;r)} \frac{|f(z)|}{\|x-z\|^2}\, dz \leq \ssc_3 \, r \, \cM(f)(x) \quad \text{for all}~r >0.
\]
(Inspection of the proof in \cite{Hedberg} yields $\ssc_3 \leq \frac{4}{\ln 2}$).
Combining these results we obtain the estimate  \eqref{Hardy1} with $c_\cH:=2\,\ssc_1\,(1+8 \, \ssc_2)\,\ssc_3$.
\end{proof}
\\

The result of this  lemma is essentially the same as in Proposition 1 in \cite{Hajlasz} and in Theorem 3.9 in \cite{Kinnunen}. The only difference is that in the latter results a specific choice for  the point $b(x)$ is used, namely a closest point projection of $x$ onto the boundary. Hence, in that case one has $d(x)=\|x-b(x)\|={\rm dist}(x, \partial \Omega^M)$. We will use this specific choice also in the proof of 
Corollary~\ref{Coro1} below. In section~\ref{SectPoincare}, however, we will use a different choice for $b(x)$.

\begin{corollary} \label{Coro1}
The following holds (recall that $\Omega_{\bset} =\cup_{i \in \cI_{\bset}} \Omega_i$):
\[
  \left(\int_{\Omega_{\bset}} \left(\frac{u(x)}{ {\rm dist}(x,  \partial \Omega^M)}\right)^2\, dx\right)^\frac12 \leq \frac{c_\cM c_\cH}{\cf} \, |u|_{1,\Omega^M} \quad \text{for all}~u \in H^1_0(\Omega^M),
\]
with $c_\cM$ as in~\eqref{estimatemaximal}, $c_\cH$ as in~\eqref{Hardy1} and $\cf$ from \eqref{fatness}.
\end{corollary}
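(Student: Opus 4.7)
The plan is to prove the estimate first for $u \in C_0^\infty(\Omega^M)$ and then extend to $H^1_0(\Omega^M)$ by density together with Fatou's lemma, since the left-hand side is lower semicontinuous in $u$ for the $H^1$-topology. For the smooth case, the idea is to invoke Lemma~\ref{LemHardy} pointwise at each $x\in\Omega_\bset$, but with the choice of exterior point $b(x)$ tailored so that simultaneously $d(x)={\rm dist}(x,\partial\Omega^M)$ and $\gamma(x)\geq \cf$. Concretely, I would let $b(x)$ be a closest-point projection of $x$ onto $\partial\Omega^M$, so that by construction $d(x)=\|x-b(x)\|={\rm dist}(x,\partial\Omega^M)$, which is exactly the distance function appearing on the left-hand side of the corollary.

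The key step is then to verify the fatness bound $\gamma(x)\geq \cf$ for this particular choice. Since $\partial\Omega^M=\cup_{j\in\cI_\bset}\Gamma_j$, the point $b(x)$ lies in some $\Gamma_j$, and I would argue that by the structure of $\Omega^M$ as a union of balls, one has $x\in\overline\Omega_j$ for this $j$: at a smooth point of $\partial\Omega^M$ the inward normal to $\partial\Omega^M$ at $b(x)$ coincides with the inward normal of the sphere $\partial\Omega_j$, so the segment from $b(x)$ to $x$ (which lies in $\overline{\Omega^M}$ by the minimality of $b(x)$) points into $\Omega_j$. Consequently $b(x)$ is also the closest point projection of $x$ onto $\Gamma_j$, i.e.\ $b(x)=p(x)$ in the notation of Indicator~\ref{indic45}, and the fatness bound \eqref{fatness1} yields $\gamma(x)\geq \hat c_j\geq \cf$.

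Substituting these identifications into Lemma~\ref{LemHardy} gives the pointwise estimate
\[
\frac{|u(x)|}{{\rm dist}(x,\partial\Omega^M)}\leq \frac{c_\cH}{\cf}\,\cM(\|\nabla u\|)(x)\qquad\text{for all }x\in\Omega_\bset.
\]
Squaring, integrating over $\Omega_\bset\subset\Bbb{R}^3$, extending $\|\nabla u\|$ by zero outside $\Omega^M$, and invoking the maximal function bound \eqref{estimatemaximal}---whose $L^2(\Bbb{R}^3)$ norm on the right-hand side collapses to $\|\nabla u\|_{L^2(\Omega^M)}=|u|_{1,\Omega^M}$ since $u$ is supported in $\Omega^M$---yields the stated estimate with constant $c_\cM c_\cH/\cf$, and the density step concludes.

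The step I expect to be the main obstacle is the geometric claim that the closest projection $b(x)$ on $\partial\Omega^M$ lies on a $\Gamma_j$ with $x\in\Omega_j$, so that Indicator~\ref{indic45} is applicable verbatim. At ``corner'' boundary points (points where two or more spheres $\partial\Omega_j$ meet on $\partial\Omega^M$) the inward-normal argument needs to be inspected more carefully, but such points form a lower-dimensional subset of $\partial\Omega^M$; the corresponding issues can be dispatched either through the exterior-cone Assumption~{\bf (A4)} (which guarantees that a non-degenerate inward direction still exists) or by a standard perturbation of $b(x)$ within $\Omega^{M,c}$, which does not affect the $L^2$ integral.
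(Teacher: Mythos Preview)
Your argument is correct and follows the same overall scheme as the paper: density, the pointwise Hardy estimate of Lemma~\ref{LemHardy}, and then the maximal-function bound~\eqref{estimatemaximal}. The one place where you diverge is in the choice of the exterior point. You project $x$ onto $\partial\Omega^M$ and then argue geometrically that the foot lies on some $\Gamma_j$ with $x\in\Omega_j$; the paper instead fixes \emph{any} $i\in\cI_\bset$ with $x\in\Omega_i$ and takes $b(x)=p(x)$ to be the projection onto $\Gamma_i$, exactly as in Indicator~\ref{indic45}. That choice makes the fatness bound $\gamma(x)\ge\cf$ immediate and completely bypasses the geometric obstacle you flag. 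The trade-off is that then $d(x)={\rm dist}(x,\Gamma_i)$ rather than ${\rm dist}(x,\partial\Omega^M)$; since $\Gamma_i\subset\partial\Omega^M$ one has ${\rm dist}(x,\Gamma_i)\ge{\rm dist}(x,\partial\Omega^M)$, and the paper's written inequality $d(x)\le{\rm dist}(x,\partial\Omega^M)$ is a slip in the direction. Your geometric claim, when it holds, is exactly what reconciles the two choices: it says one can pick $i$ so that projection onto $\Gamma_i$ and onto $\partial\Omega^M$ coincide. In that sense your route is the more careful one, and the normal-cone argument you sketch for smooth boundary points (the segment $[b(x),x]$ lies along the inward normal of $\partial\Omega_j$, hence enters $\Omega_j$) is the right mechanism; the corner points indeed form a null set and do not affect the $L^2$ estimate.
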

\begin{proof}
Due to density it suffices to consider $u \in C_0^\infty(\Omega^M)$. Take $x \in \Omega_{\bset}$ and $i\in \cI_{\bset}$ such that $x \in \Omega_i$. We take for $b(x)\in  \Omega^{M,c}$ the closest point projection on $ \Gamma_i$ as in Indicator~\ref{indic45}, hence $d(x)=\|x-b(x)\| = {\rm dist}(x,  \Gamma_i ) \leq  {\rm dist}(x,  \partial \Omega^M)$.
For $b(x)$ the fatness estimate~\eqref{fatness1} holds. 
Combining this with the pointwise Hardy estimate~\eqref{Hardy1} and~\eqref{estimatemaximal} yields
\begin{align*}
	 &\left(\int_{\Omega_{\bset}} \left(\frac{u(x)}{ {\rm dist}(x,  \partial \Omega^M)}\right)^2\, dx\right)^\frac12
	\leq \left( \int_{\Omega_{\bset}}  \frac{c_\cH^2 }{\cf^2}  \cM(\|\nabla u\|)(x)^2 \, dx \right)^\frac12 
	\\ & 
	\leq  \frac{c_\cH  }{\cf}    \|\cM(\|\nabla u\|)\|_{L^2(\Bbb{R}^3)} 
	\leq c_\cM \frac{c_\cH}{\cf}   \|\nabla u\|_{L^2(\Bbb{R}^3)} 
	= \frac{c_\cM c_\cH }{\cf}|u|_{1,\Omega^M}, 
\end{align*}
which completes the proof.
\end{proof}

\subsection{Stability of the subspace decomposition}
In the definition of the stability constant $s_0$ in \eqref{c0} one is free to choose for $v \in H_0^1(\Omega^M)$ any decomposition $v = \sum_{i=1}^M v_i$ with $v_i \in H_0^1(\Omega_i)$. Below we use the natural choice $v_i=\theta_i v$ and analyze this particular decomposition.
The result in the following theorem is crucial for the analysis of the Schwarz method. 

\begin{theorem} \label{thm1}
 The following estimates hold for all $v \in H_0^1(\Omega^M)$:
\begin{equation} 
 	\sum_{i=1}^M |\theta_i v|_{1,\Omega_i}^2  
 	\leq {\sC}_1 |v|_{1,\Omega^M}^2 + {\sC}_2 \sum_{i \in \cI_{\intset}} \|v\|_{L^2(\Omega_i)}^2 
 	\leq {\sC}_1|v|_{1,\Omega^M}^2 + {\sC}_2 \, \Nzero \|v\|_{L^2(\Omega^M)}^2,  \label{resthm1}
\end{equation}
where the constants ${\sC}_1,{\sC}_2$ depend only on the local geometry indicators in $\cG_L^M$; in particular  ${\sC}_2=2\Nzero^2/\gint^2$.
\end{theorem}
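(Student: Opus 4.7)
The plan is to expand the gradient via the product rule, $\nabla(\theta_i v) = v\nabla\theta_i + \theta_i\nabla v$, use the elementary inequality $(a+b)^2 \le 2a^2 + 2b^2$, and then treat the two resulting pieces separately, splitting the first one according to whether $i$ is an interior or a boundary index. Concretely, we get
\[
\sum_{i=1}^M |\theta_i v|_{1,\Omega_i}^2
\le 2\sum_{i=1}^M \int_{\Omega_i} \theta_i^2 \|\nabla v\|^2\,dx
+ 2\sum_{i\in \cI_{\intset}} \int_{\Omega_i} v^2 \|\nabla\theta_i\|^2\,dx
+ 2\sum_{i\in \cI_{\bset}} \int_{\Omega_i} v^2 \|\nabla\theta_i\|^2\,dx.
\]

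For the first sum I would use $0\le \theta_i \le 1$ together with $\sum_i \theta_i = 1$ to bound $\sum_i \theta_i^2 \le \sum_i \theta_i = 1$ pointwise, which yields the clean bound $|v|_{1,\Omega^M}^2$. For the interior sum I would plug in the uniform bound \eqref{PU2}, giving $\|\nabla\theta_i\|_\infty^2 \le \Nzero^2/\gint^2$, which directly produces the term $\sC_2\sum_{i\in \cI_{\intset}} \|v\|_{L^2(\Omega_i)}^2$ with $\sC_2 = 2\Nzero^2/\gint^2$ as claimed. The final inequality in the statement is then just the observation that, by Indicator 3.2, any point lies in at most $\Nzero$ of the $\Omega_i$, so $\sum_{i\in \cI_{\intset}} \|v\|_{L^2(\Omega_i)}^2 \le \Nzero \|v\|_{L^2(\Omega^M)}^2$.

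The genuinely delicate piece is the boundary sum, because \eqref{PU3} gives only the singular bound $\|\nabla\theta_i(x)\| \le \Nzero/(\gb\,{\rm dist}(x,\partial\Omega^M))$. Here I would first assume $v\in C_0^\infty(\Omega^M)$ by density, then estimate
\[
\sum_{i\in \cI_{\bset}}\int_{\Omega_i} v^2\|\nabla\theta_i\|^2\,dx
\le \frac{\Nzero^2}{\gb^2}\sum_{i\in \cI_{\bset}}\int_{\Omega_i}\left(\frac{v(x)}{{\rm dist}(x,\partial\Omega^M)}\right)^2 dx
\le \frac{\Nzero^3}{\gb^2}\int_{\Omega_{\bset}}\left(\frac{v(x)}{{\rm dist}(x,\partial\Omega^M)}\right)^2 dx,
\]
using again the finite-overlap bound $\Nzero$ on $\Omega_{\bset}$. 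At this point the integral Hardy estimate from Corollary~\ref{Coro1} is exactly what is needed: it converts the singular weight into $(c_\cM c_\cH/\cf)^2|v|_{1,\Omega^M}^2$, absorbed into $\sC_1$.

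Collecting everything gives \eqref{resthm1} with $\sC_1 = 2 + 2\Nzero^3 c_\cM^2 c_\cH^2/(\gb\cf)^2$ and $\sC_2 = 2\Nzero^2/\gint^2$, both depending only on the local indicators in $\cG_L^M$. The main obstacle, as anticipated, is the non-integrable singularity of $\nabla\theta_i$ near $\partial\Omega^M$ for boundary balls; this is precisely why the pointwise Hardy machinery of Section~\ref{sectHardy} (and hence the local exterior fatness indicator $\cf$) was set up beforehand, and invoking Corollary~\ref{Coro1} is the only non-routine ingredient of the argument.
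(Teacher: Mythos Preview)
Your proof is correct and follows essentially the same route as the paper's: product-rule expansion, split into interior and boundary indices, Lemma~\ref{lemPU} for the gradient bounds, and Corollary~\ref{Coro1} for the Hardy estimate on the boundary piece. The only (minor) difference is that for the $\sum_i\int\theta_i^2\|\nabla v\|^2$ term you use the sharper pointwise bound $\sum_i\theta_i^2\le\sum_i\theta_i=1$, giving $\sC_1=2+2\Nzero^3c_\cM^2c_\cH^2/(\gb\cf)^2$, whereas the paper uses $\theta_i^2\le1$ together with the overlap bound to obtain $\sC_1=2\Nzero\big(1+\Nzero^2c_\cM^2c_\cH^2/(\gb\cf)^2\big)$.
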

\begin{proof}
The second inequality in \eqref{resthm1} is an easy consequence of the definition of the overlap indicator $\Nzero$. 
We derive the first inequality. 
Take $v \in H_0^1(\Omega^M)$. Using $0 \leq \theta_i \leq 1$ and the definition of $\Nzero$ we get
\begin{align}
 \sum_{i=1}^M |\theta_i v|_{1,\Omega_i}^2 & = \sum_{i=1}^M \int_{\Omega_i} \|\nabla (\theta_i v)\|^2 \,dx  \nonumber 
    \leq 2 \sum_{i=1}^M\int_{\Omega_i} \theta_i^2 \|\nabla v\|^2 \,dx +  2 \sum_{i=1}^M\int_{\Omega_i} \|\nabla \theta_i\|^2 v^2 \, dx \nonumber 
    \\ 
    &  \leq 2 N_0 |v|_{1,\Omega^M}^2 + 2 \sum_{i=1}^M\int_{\Omega_i} \|\nabla \theta_i\|^2 v^2 \, dx. \label{heq1}
\end{align}
For estimating the second term in \eqref{heq1} we use the partitioning $\{1,\ldots,M\}=\cI_{\intset} \cup \cI_{\bset}$. Using the result \eqref{PU2} we obtain
\begin{equation} \label{heq2} 
     \sum_{i \in \cI_{\intset}} \int_{\Omega_i} \|\nabla \theta_i\|^2 v^2 \, dx  
     \leq \left( \frac{\Nzero}{\gint}\right)^2  \sum_{i \in \cI_{\intset}} \|v\|_{L^2(\Omega_i)}^2.
 \end{equation}
We finally analyze the $\sum_{i \in \cI_{\bset}}$ part of the sum in \eqref{heq1}.  Take $ i \in \cI_{\bset}$, $x \in \Omega_i$. 
Using \eqref{PU3} and Corollary~\ref{Coro1} we obtain
 \begin{align*}
  \sum_{i \in \cI_{\bset}} \int_{\Omega_i}  \|\nabla \theta_i\|^2 v^2 \, dx  
  & \leq \left(\frac{\Nzero}{\gb}\right)^2 \sum_{i \in \cI_{\bset}} \int_{\Omega_i} \left( \frac{v}{{\rm dist}(x,\partial \Omega^M)}\right)^2 \, dx  \\ 
  & \leq \frac{\Nzero^3}{\gb^2} \,  \int_{\Omega_{\bset}}\left( \frac{v}{{\rm dist}(x,\partial \Omega^M)}\right)^2 \, dx 
 \leq \frac{\Nzero^3 c_\cM^2 c_\cH^2}{\gb^2 \cf^2} |u|_{1,\Omega^M}^2.
 \end{align*}
%
Combining this with  \eqref{heq1}, \eqref{heq2} completes the proof, yielding
\[	
	{\sC}_1= 2\Nzero\left(1 +  \frac{\Nzero^2 c_\cM^2 c_\cH^2}{\gb^2 \cf^2}\right).
\]
\end{proof}
\ \\
 The result \eqref{resthm1}  implies that, although $\theta_i$, $i \in \cI_\bset$, is \emph{not} necessarily in $H^1(\Omega_i)$ (Remark~\ref{remsmoothness}), for $v \in H_0^1(\Omega^M)$ the product $\theta_i v$ is an element of $H^1(\Omega_i)$. In this product the singularity of $\nabla \theta_i$ at the boundary can be controlled due to the property $v_{|\partial \Omega^M}=0$.  
\begin{corollary}\label{corolA}  Assume $\cI_{\intset}=\emptyset$. The estimate 
 \begin{equation} 
 	\sum_{i=1}^M |\theta_i v|_{1,\Omega_i}^2  
 	\leq {\sC}_1 |v|_{1,\Omega^M}^2, \quad \text{for all}~ v \in H_0^1(\Omega^M),
\end{equation}
holds, where the constant ${\sC}_1$ depends only on the local geometry indicators in $\cG_L^M$.
\end{corollary}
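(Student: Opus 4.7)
The corollary follows immediately from the main estimate in Theorem~\ref{thm1}. The plan is simply to observe that the term which must be bounded by an $L^2$-norm arises only from the interior balls, so it vanishes under the hypothesis $\cI_{\intset}=\emptyset$.

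More precisely, I would start from the first inequality of \eqref{resthm1},
\[
\sum_{i=1}^M |\theta_i v|_{1,\Omega_i}^2 \leq \sC_1 |v|_{1,\Omega^M}^2 + \sC_2 \sum_{i \in \cI_{\intset}} \|v\|_{L^2(\Omega_i)}^2,
\]
which was established for every $v\in H_0^1(\Omega^M)$ in Theorem~\ref{thm1}. Inspecting the derivation, the $L^2$-term originates solely from the estimate \eqref{heq2}, where the uniform bound \eqref{PU2} on $\|\nabla\theta_i\|$ for $i\in\cI_{\intset}$ forces us to compensate by picking up a zero-order contribution from $v$. For $i\in\cI_{\bset}$, by contrast, the pointwise bound \eqref{PU3} together with Corollary~\ref{Coro1} absorbs the corresponding contribution directly into $|v|_{1,\Omega^M}^2$, producing no $L^2$-remainder.

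Under the hypothesis $\cI_{\intset}=\emptyset$ the index set of the sum on the right-hand side is empty, so the remainder term is identically zero. Hence the inequality reduces to the claimed bound
\[
\sum_{i=1}^M |\theta_i v|_{1,\Omega_i}^2 \leq \sC_1 |v|_{1,\Omega^M}^2,
\]
with $\sC_1 = 2\Nzero\bigl(1 + \Nzero^2 c_\cM^2 c_\cH^2/(\gb^2 \cf^2)\bigr)$ depending only on the local indicators in $\cG_L^M$.

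There is no genuine obstacle here; the work has already been done inside the proof of Theorem~\ref{thm1}. The only thing worth emphasizing is \emph{why} the assumption $\cI_{\intset}=\emptyset$ suffices to remove the $L^2$-term, namely that the singularity of $\nabla \theta_i$ near $\partial\Omega^M$ is controllable via the pointwise Hardy inequality because $v$ vanishes there, whereas the mild (but still nonzero) size of $\nabla \theta_i$ in the interior must otherwise be paid for with a Poincaré-type term.
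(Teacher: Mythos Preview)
Your proposal is correct and matches the paper's intended reasoning: the corollary is stated in the paper without a separate proof precisely because it is an immediate consequence of the first inequality in \eqref{resthm1}, the sum over $\cI_{\intset}$ being empty under the hypothesis. Your additional explanation of \emph{why} the $L^2$-term arises only from interior balls is accurate and adds helpful context, but no further argument is needed.
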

 
\begin{corollary} For all $v \in H_0^1(\Omega^M)$ we have
\begin{equation} \label{res1cor}
 \theta_i v \in H_0^1(\Omega_i), ~~i =1, \ldots, M.
\end{equation}
Furthermore,
\begin{equation} \label{res2cor}
 \sum_{i=1}^M H_0^1(\Omega_i)= H_0^1(\Omega^M)
\end{equation}
holds. 
\end{corollary}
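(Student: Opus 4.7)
The plan is to establish \eqref{res1cor} by a density argument that uses Theorem~\ref{thm1} as a uniform multiplier bound, and then to read off \eqref{res2cor} at once from the partition-of-unity identity $\sum_{i=1}^M \theta_i = 1$ on $\Omega^M$.

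I would first verify \eqref{res1cor} for $v \in C_0^\infty(\Omega^M)$. In this case Theorem~\ref{thm1} already delivers $\theta_i v \in H^1(\Omega_i)$ (the $L^2$ part being trivial since $|\theta_i v| \le |v|$), so it only remains to check the zero-trace condition. I would show that $\theta_i v$ extends continuously to $\partial\Omega_i$ with value $0$ by splitting the boundary into two pieces. On $\partial\Omega_i \cap \partial\Omega^M$, the compact support of $v$ inside $\Omega^M$ forces $v \equiv 0$ in a full neighbourhood of this set, hence $\theta_i v$ vanishes there regardless of any discontinuity of $\theta_i$ itself (cf.\ Remark~\ref{remsmoothness}). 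On the complementary piece $\partial\Omega_i \cap \Omega^M$, for any sequence $x_n \in \Omega_i$ with $x_n \to y$ one has $\delta_i(x_n) \to 0$ while $\delta(x_n) \to \delta(y) > 0$ (the latter because $y$ lies in some open $\Omega_j$ with $j \neq i$), so $\theta_i(x_n) \to 0$ and $(\theta_i v)(x_n) \to 0$. A continuous $H^1(\Omega_i)$-function vanishing on the Lipschitz boundary $\partial\Omega_i$ belongs to $H_0^1(\Omega_i)$.

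For a general $v \in H_0^1(\Omega^M)$ I would take $v_n \in C_0^\infty(\Omega^M)$ with $|v - v_n|_{1,\Omega^M} \to 0$; Poincar\'e--Friedrichs on the bounded Lipschitz domain $\Omega^M$ additionally gives $\|v - v_n\|_{L^2(\Omega^M)} \to 0$. Applying Theorem~\ref{thm1} to $v - v_n$ then yields $|\theta_i(v - v_n)|_{1,\Omega_i} \to 0$, while $|\theta_i| \le 1$ gives the $L^2$-convergence of $\theta_i v_n$ to $\theta_i v$ on $\Omega_i$. Thus $\theta_i v_n \to \theta_i v$ in $H^1(\Omega_i)$, and since $H_0^1(\Omega_i)$ is a closed subspace containing every $\theta_i v_n$, the limit $\theta_i v$ lies in $H_0^1(\Omega_i)$, proving \eqref{res1cor}. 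Statement \eqref{res2cor} now follows immediately: the inclusion $\sum_{i=1}^M H_0^1(\Omega_i) \subset H_0^1(\Omega^M)$ is the standard zero-extension embedding associated with $\Omega_i \subset \Omega^M$, while the reverse inclusion is given by the pointwise identity $v = \sum_{i=1}^M \theta_i v$ combined with \eqref{res1cor}.

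The main obstacle is isolating the zero-trace property in the face of the fact, emphasised in Remark~\ref{remsmoothness}, that $\theta_i$ is not even in $H^1(\Omega_i)$ in general and can be genuinely discontinuous on $\partial\Omega_i \cap \partial\Omega^M$. The decisive trick is to use density of $C_0^\infty(\Omega^M)$ so that approximating $v$'s are supported at positive distance from $\partial\Omega^M$, neutralising exactly the region where $\theta_i$ misbehaves, and then to rely on Theorem~\ref{thm1} as the continuous-multiplier estimate that propagates the inclusion through the limit.
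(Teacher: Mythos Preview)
Your proof is correct and follows the same overall skeleton as the paper's: use Theorem~\ref{thm1} to place $\theta_i v$ in $H^1(\Omega_i)$, split $\partial\Omega_i$ into the piece on $\partial\Omega^M$ and the piece interior to $\Omega^M$, verify the zero trace on each, and then read off \eqref{res2cor} from the partition-of-unity identity.

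The difference is in how the zero trace on $\Gamma_i=\partial\Omega_i\cap\partial\Omega^M$ is obtained. The paper argues directly for general $v\in H_0^1(\Omega^M)$ by writing $(\theta_i v)_{|\Gamma_i}=(\theta_i)_{|\Gamma_i}\,v_{|\Gamma_i}=0$, invoking boundedness of $\theta_i$ and the vanishing trace of $v$. You instead insert a density layer: for $v\in C_0^\infty(\Omega^M)$ the support of $v$ stays away from $\partial\Omega^M$, so $\theta_i v$ is identically zero near $\Gamma_i$ and the discontinuity of $\theta_i$ there (Remark~\ref{remsmoothness}) never enters; then you pass to the limit using Theorem~\ref{thm1} as a uniform multiplier bound and the closedness of $H_0^1(\Omega_i)$. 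Your route is slightly longer but more transparent about the subtlety that $\theta_i$ itself has no well-defined pointwise trace on $\Gamma_i$; the paper's one-line product-of-traces formula is correct in spirit (a bounded multiplier times a function with zero trace has zero trace) but glosses over exactly this point. Both arguments ultimately rest on Theorem~\ref{thm1} and the same boundary decomposition, so the conceptual content is the same.
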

\begin{proof}
Take $v \in H_0^1(\Omega^M)$ and $i \in \cI$. From Theorem~\ref{thm1} it follows that $\theta_i v \in H^1(\Omega_i)$. Define $\Gamma_i:=\partial \Omega_i \cap \partial \Omega^M$, i.e., $\partial \Omega_i = \Gamma_i \cup (\partial \Omega_i \setminus \Gamma_i)$. If ${\rm meas}_2(\Gamma_i) >0 $, then on $\Gamma_i$ we have (due to the trace theorem) $(\theta_i v)_{|\Gamma_i}= (\theta_i)_{|\Gamma_i} v_{|\Gamma_i}=0$, due to $v_{|\partial \Omega^M}=0$ and boundedness of $\theta_i$.  For $x \in   \partial \Omega_i \setminus \Gamma_i $ we have $\theta_i(x)=0$, hence, $(\theta_i v)_{| (\partial \Omega_i \setminus \Gamma_i)}=0$. This completes the proof of \eqref{res1cor}.

Take $v \in H_0^1(\Omega^M)$ and note that
$
  v = \left( \sum_{i=1}^M \theta_i\right) v = \sum_{i=1}^M (\theta_i v),
$
with $\theta_i v \in H_0^1(\Omega_i)$. This proves the result \eqref{res2cor}.
\end{proof}
\ \\

The result \eqref{res2cor} implies that the assumption used in Lemma~\ref{lemmaXu} is satisfied:
\begin{corollary}
 $\sum_{i=1}^M H_0^1(\Omega_i)$ is closed in $H_0^1(\Omega^M)$.
\end{corollary}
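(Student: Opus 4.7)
The plan is essentially a one-line argument: the previous corollary established the set equality $\sum_{i=1}^M H_0^1(\Omega_i) = H_0^1(\Omega^M)$ via the explicit decomposition $v = \sum_{i=1}^M \theta_i v$ with $\theta_i v \in H_0^1(\Omega_i)$, so the sum of subspaces coincides with the ambient space itself, which is trivially closed in itself. Thus there is nothing genuinely new to prove; the result is just a restatement of \eqref{res2cor} in the form required by Lemma~\ref{lemmaXu}.

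Concretely, I would write one sentence invoking \eqref{res2cor} to identify $\sum_{i=1}^M H_0^1(\Omega_i)$ with the whole space $H_0^1(\Omega^M)$, and a second sentence observing that a normed space is (topologically) closed in itself. The real mathematical content sits upstream: the non-obvious fact is that $\theta_i v \in H_0^1(\Omega_i)$ even though $\theta_i$ itself is generically not in $H^1(\Omega_i)$ for $i \in \cI_{\bset}$ (cf.\ Remark~\ref{remsmoothness}). That fact was secured by Theorem~\ref{thm1} via the pointwise Hardy inequality and the exterior fatness indicator $\cf$, which controls the boundary singularity of $\nabla \theta_i$ against the vanishing-trace property of $v$ on $\partial \Omega^M$.

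There is no obstacle. The only thing to be careful about is not to claim more than the statement: we are verifying the hypothesis of Lemma~\ref{lemmaXu}, and since that hypothesis is phrased as ``$\sum_{i=1}^M H_0^1(\Omega_i)$ is closed in $H_0^1(\Omega^M)$'', the cleanest presentation is to cite \eqref{res2cor} and note that equality implies closedness, rather than attempting a direct closedness argument (e.g.\ via limits of finite sums), which would be unnecessary once \eqref{res2cor} is in hand.
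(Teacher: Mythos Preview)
Your proposal is correct and matches the paper's approach exactly: the paper states this corollary immediately after \eqref{res2cor} with the remark that ``the result \eqref{res2cor} implies that the assumption used in Lemma~\ref{lemmaXu} is satisfied'' and gives no further proof. Your observation that equality with the ambient space trivially yields closedness is precisely the intended one-line argument.
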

\ \\

The remaining task is to bound the term 
$\sum_{i \in \cI_{int}} \|v\|_{L^2(\Omega_i)}^2$ in  \eqref{resthm1}. 
 In section~\ref{SectPoincare} we derive a Poincar\'e estimate in which this term is bounded in terms of the desired norm $|v|_{1,\Omega^M}$. This immediately implies a bound for the stability constant $s_0$. 


\subsection{Poincar\'e estimate and main result}
\label{SectPoincare} 
\rev{In this section we derive a main result, namely an estimate for the stability quantity $s_0$ in Theorem~\ref{thmstab} below. We summarize the different parameters that have been introduced above and play a role in this main result. These are:
\begin{align*}
 & \text{local geometry indicators, cf. Section~\ref{Sectgeometry}}:~\cG_L^M:=\{\Nmax,\Nzero,\gint^{-1}, \gb^{-1},\cf^{-1}\},\\
 & \text{global exterior fatness indicator, cf. \eqref{indDf}}:~d_F,\\
 & \text{generic Hardy constants, cf. \eqref{estimatemaximal}, \eqref{Hardy1}}:~~c_\cM, ~c_\cH.
\end{align*}
}
We use the pointwise Hardy inequality in Lemma~\ref{LemHardy} in combination with the \emph{global} exterior fatness property as in indicator~\ref{ind46} to derive a bound for the term $\|v\|_{L^2(\Omega^M)}$ in terms of $|v|_{1,\Omega^M}$. 
Note that the optimal constant that occurs in such an estimate is the inverse of the smallest eigenvalue of the Laplace eigenvalue problem in $H_0^1(\Omega^M)$. 
In the following, we will find a geometry-dependent upper bound of this constant, or equivalently, a lower bound of the smallest eigenvalue, that  depends on geometric features of the domain $\Omega^M$. 
For this we use the global fatness indicator in \eqref{indDf} which characterizes certain global geometry properties of $\Omega^M$ (which then, for example,  distinguishes a linear chain from a globular topology).

\begin{lemma} \label{Lemma5} The following estimate holds for all $u \in H_0^1(\Omega^M)$:
 \[
  \|u\|_{L^2(\Omega^M)} \leq c_\cH c_\cM d_F |u|_{1,\Omega^M},
\]
with the global fatness indicator $d_F$ as in \eqref{indDf}.
\end{lemma}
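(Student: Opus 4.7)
The plan is to apply the pointwise Hardy inequality of Lemma~\ref{LemHardy} with the specific choice of exterior point $b(x)$ furnished by the global exterior fatness indicator (Indicator~\ref{ind46}), and then use the $L^2$-boundedness of the Hardy--Littlewood maximal function.

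By density it suffices to consider $u \in C_0^\infty(\Omega^M)$. For each $x \in \Omega^M$, let $b(x) \in \Omega^{M,c}$ be the point (namely $b_{\lambda_{\min}}(x)$, the closest point projection of $x$ onto ${\rm SES}_{\lambda_{\min}}$) provided by Indicator~\ref{ind46}, and set $d(x) := \|x - b(x)\| = d_{\lambda_{\min}}(x)$. By \eqref{eq:GlobFatness} the fatness hypothesis \eqref{condHardy} in Lemma~\ref{LemHardy} holds with $\gamma(x) = \gamma_F$. Moreover, by the very definition of $d_{\rm SES}(\lambda_{\min})$ we have $d(x) \le d_{\rm SES}(\lambda_{\min})$, and hence
\[
\frac{d(x)}{\gamma_F} \;\le\; \frac{d_{\rm SES}(\lambda_{\min})}{\gamma_F} \;=\; d_F.
\]
Applying Lemma~\ref{LemHardy} at each $x \in \Omega^M$ therefore yields the uniform pointwise bound
\[
|u(x)| \;\le\; c_\cH \, d_F \, \cM(\|\nabla u\|)(x), \qquad x \in \Omega^M.
\]

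Squaring and integrating over $\Omega^M$, then extending the integration to $\Bbb{R}^3$ (since the integrand is non-negative) and applying the maximal function inequality \eqref{estimatemaximal} to the extension by zero of $\|\nabla u\|$, we obtain
\[
\|u\|_{L^2(\Omega^M)}^2 \;\le\; c_\cH^2 \, d_F^2 \,\|\cM(\|\nabla u\|)\|_{L^2(\Bbb{R}^3)}^2 \;\le\; c_\cH^2 \, c_\cM^2 \, d_F^2 \,\|\nabla u\|_{L^2(\Bbb{R}^3)}^2 \;=\; c_\cH^2 \, c_\cM^2 \, d_F^2 \, |u|_{1,\Omega^M}^2.
\]
Taking the square root gives the claimed estimate.

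There is no real obstacle here beyond the careful bookkeeping of constants; all of the non-trivial work has been absorbed into Lemma~\ref{LemHardy} (for the pointwise control), the maximal function estimate \eqref{estimatemaximal}, and the construction of $\lambda_{\min}$ in Indicator~\ref{ind46} that guarantees the uniform bound $d(x)/\gamma(x) \le d_F$. The only thing to note is that, in contrast to Corollary~\ref{Coro1}, the point $b(x)$ here is not the closest point on $\partial\Omega^M$ but rather the closest point on ${\rm SES}_{\lambda_{\min}}$; this is precisely what makes the resulting Poincar\'e constant sensitive to the \emph{global} topology of $\Omega^M$ (e.g., chain vs.\ globular) rather than merely to local features.
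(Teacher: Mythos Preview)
Your proof is correct and follows essentially the same route as the paper: density, apply Lemma~\ref{LemHardy} with the exterior point $b(x)$ from Indicator~\ref{ind46} so that $d(x)/\gamma(x)\le d_F$, then invoke the maximal function bound \eqref{estimatemaximal}. The only difference is cosmetic---you spell out $b(x)=b_{\lambda_{\min}}(x)$ and the inequality $d(x)\le d_{\rm SES}(\lambda_{\min})$ explicitly, which the paper leaves implicit in writing $\max_{x\in\Omega^M} d(x)/\gamma_F \le d_F$.
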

\begin{proof} Due to density it suffices to consider $u \in C_0^\infty(\Omega^M)$. 
For $x \in \Omega^M$ we take the exterior point $b(x)$  and $\gamma_F$ as in  Indicator~\ref{ind46}. Using  Lemma~\ref{LemHardy} and \eqref{estimatemaximal}  we obtain with $d(x):=\|b(x)-x\|$:
\begin{align*}
 	 \|u\|_{L^2(\Omega^M)} 
  	& = 
  	\left( \int_{\Omega^M} u(x)^2 \, dx\right)^\frac12 
  	\leq 
  	\frac{c_\cH}{\gamma_F} \max_{x \in \Omega^M} d(x) 
	\left( \int_{\Omega^M}   \cM(\|\nabla u\|)(x)^2 \, dx \right)^\frac12 
	\\ & 
	\leq 
	c_\cH d_F    \|\cM(\|\nabla u\|)\|_{L^2(\Bbb{R}^3)}  
	\leq  
	c_\cH c_\cM d_F   \|\nabla u\|_{L^2(\Bbb{R}^3)} 
	=   
	c_\cH c_\cM d_F |u|_{1,\Omega^M}, 
\end{align*}
which proves the result.
\end{proof}

\begin{remark} \rm
It follows therefore that 
\[
	c_\cH^{-1} c_\cM^{-1} d_F^{-1}
\]
is a lower bound of the lowest eigenvalue of the Laplace eigenvalue problem in $H^1_0(\Omega^M)$, where $d_F$  accounts for the geometry of the domain.
\end{remark}
\ \\
\medskip
We derive a bound for the stability quantity $s_0$, cf. \eqref{lemmaXu}:
\begin{theorem} 
\label{thmstab}
 For the stability quantity $s_0$ the following bounds hold, with $\sC_3,\sC_4$, constants that depend only on the local geometry indicators in $\cG_L^M$ \rev{and on $c_\cH$, $c_\cM$.} If $\cI_{\intset}=\emptyset$ holds we have
 \begin{equation} \label{resmain1}
   s_0 \leq \sC_3.
 \end{equation}
If $\cI_{\intset} \neq \emptyset$  we have
\begin{equation} \label{resmain2}
   s_0 \leq \sC_3 + \sC_4 d_F^2.
 \end{equation}
\end{theorem}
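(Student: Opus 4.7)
The plan is to apply Lemma~\ref{lemmaXu} with the natural partition-of-unity decomposition $v_j := \theta_j v \in H_0^1(\Omega_j)$ (which is admissible by \eqref{res1cor}) and then reduce everything to the already-proved Theorem~\ref{thm1} and Lemma~\ref{Lemma5}. Fix $v \in H_0^1(\Omega^M)$ with $|v|_{1,\Omega^M} = 1$. By the definition of $s_0$ in \eqref{c0}, it suffices to find an explicit upper bound of $\sum_{i=1}^M \bigl|P_i \sum_{j=i+1}^M v_j\bigr|_{1,\Omega^M}^2$ for this specific choice $v_j = \theta_j v$.

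First I would establish a ``locality of $P_i$'' lemma: for $w \in H_0^1(\Omega_j)$ (extended by zero), any test function $\phi \in H_0^1(\Omega_i)$ satisfies $a(w,\phi) = \int_{\Omega_i \cap \Omega_j} \nabla w \cdot \nabla \phi\, dx$, which vanishes if $j \notin \cN_i$. Hence $P_i v_j = 0$ unless $j \in \cN_i$, and since $P_i$ is an $a$-orthogonal projection we get
\begin{equation*}
  \Bigl|P_i \sum_{j=i+1}^M v_j\Bigr|_{1,\Omega^M}^2
  = \Bigl|P_i \sum_{\substack{j>i\\ j\in \cN_i}} v_j\Bigr|_{1,\Omega^M}^2
  \le \Nmax \sum_{\substack{j>i\\ j\in \cN_i}} |v_j|_{1,\Omega_j}^2,
\end{equation*}
using Cauchy--Schwarz together with $\mathrm{card}(\cN_i) \le \Nmax$ and $|v_j|_{1,\Omega^M}=|v_j|_{1,\Omega_j}$. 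Summing over $i$ and switching the order of summation, each index $j$ appears at most $\Nmax$ times, yielding
\begin{equation*}
  \sum_{i=1}^M \Bigl|P_i \sum_{j=i+1}^M v_j\Bigr|_{1,\Omega^M}^2
  \le \Nmax^2 \sum_{j=1}^M |\theta_j v|_{1,\Omega_j}^2.
\end{equation*}

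Now I invoke Theorem~\ref{thm1}, which delivers exactly the right-hand side. In the case $\cI_{\intset} = \emptyset$, Corollary~\ref{corolA} gives $\sum_j |\theta_j v|_{1,\Omega_j}^2 \le \sC_1 |v|_{1,\Omega^M}^2 = \sC_1$, and taking the supremum over $v$ proves \eqref{resmain1} with $\sC_3 := \Nmax^2 \sC_1$. In the case $\cI_{\intset} \ne \emptyset$, Theorem~\ref{thm1} yields the extra term $\sC_2 \Nzero \|v\|_{L^2(\Omega^M)}^2$, which I control with the global Poincar\'e--Hardy estimate of Lemma~\ref{Lemma5}: $\|v\|_{L^2(\Omega^M)}^2 \le c_\cH^2 c_\cM^2 d_F^2 |v|_{1,\Omega^M}^2 = c_\cH^2 c_\cM^2 d_F^2$. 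Collecting terms gives \eqref{resmain2} with $\sC_3 := \Nmax^2 \sC_1$ and $\sC_4 := \Nmax^2 \sC_2 \Nzero c_\cH^2 c_\cM^2$, both depending only on $\cG_L^M$.

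No step is genuinely hard; the structure is the textbook Xu--Schwarz argument. The only mildly subtle point is the locality of $P_i$ (justifying that $P_i v_j$ vanishes whenever $\Omega_i \cap \Omega_j = \emptyset$), which needs one line of testing against $\phi\in H_0^1(\Omega_i)$ and a support argument. Everything else is bookkeeping: Cauchy--Schwarz in the double sum (which is where the factor $\Nmax^2$ enters), the substitution $v_j = \theta_j v$, and the application of Theorem~\ref{thm1} followed by either Corollary~\ref{corolA} or Lemma~\ref{Lemma5} depending on whether interior balls are present. The $d_F^2$-dependence in \eqref{resmain2} is inherited directly from Lemma~\ref{Lemma5}, which is why the case $\cI_{\intset} \ne \emptyset$ picks up a global (rather than purely local) geometric factor.
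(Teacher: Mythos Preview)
Your proof is correct and follows essentially the same approach as the paper's: choose the partition-of-unity decomposition $v_j=\theta_j v$, reduce the double sum to $\sum_j |\theta_j v|_{1,\Omega_j}^2$ via the neighbour structure, then apply Theorem~\ref{thm1} together with Corollary~\ref{corolA} or Lemma~\ref{Lemma5}. The only cosmetic difference is that you invoke the locality $P_i v_j=0$ for $j\notin\cN_i$ before bounding the projection, whereas the paper first uses $|P_i w|_{1,\Omega^M}\le |w|_{1,\Omega_i}$ and then the support argument on $\Omega_i$; this yields $\Nmax^2$ in your constants versus the paper's $\Nmax$, which is immaterial for the statement.
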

\begin{proof}
For $v \in H_0^1(\Omega^M)$ we define $\hat v_j:= \theta_j v \in H_0^1(\Omega_j)$, i.e., $v = \sum_{j=1}^M \hat v_j$. Note that: 
\begin{align*}
 \inf_{\sum_{j=1}^M v_j =v}\sum_{i=1}^M  \Big|P_i \sum_{j=i+1}^M v_j \Big|_{1,\Omega_i}^2  
  &\leq \sum_{i=1}^M  \Big|P_i \sum_{j=i+1}^M  \hat v_j \Big|_{1,\Omega_i}^2
   \rev{=  \sum_{i=1}^M \Big|P_i \sum_{j\in \cN_i} \hat v_j  \Big|_{1,\Omega_i}^2}
   \\
  & \rev{\leq  \sum_{i=1}^M \Big|\sum_{j\in \cN_i} \hat v_j  \Big|_{1,\Omega^M}^2 \le \Nmax \sum_{i=1}^M \sum_{j\in \cN_i} |\hat v_j |_{1,\Omega_j}^2 } \\
& \rev{\le \Nmax^2 \sum_{i=1}^M | \hat v_i |_{1,\Omega_i}^2.}
\end{align*} 
\black
For estimating the term $\sum_{i=1}^M | \hat v_i |_{1,\Omega_i}^2$ we apply Theorem~\ref{thm1}. If $\cI_{\intset}= \emptyset$ we use Corollary~\ref{corolA} and then, using the definition of $s_0$, obtain the result in \eqref{resmain1}, with $\sC_3:=  \Nmax^2 {\sC}_1$ and ${\sC}_1$ as in Theorem~\ref{thm1}.  If $\cI_{\intset} \neq \emptyset$ we apply Theorem~\ref{thm1} and Lemma~\ref{Lemma5} and obtain the result \eqref{resmain2}, with $\sC_4= \rev{\Nmax^2}  {\sC}_2 \Nzero c_\cH^2 c_\cM^2$ (${\sC}_2$ as in Theorem~\ref{thm1}).
\end{proof}

\medskip
Note that the constants $\sC_3$, $\sC_4$ depend only on the local geometry indicators in $\cG_L^M$ (and the generic constants $c_\cH$, $c_\cM$). The information on the global geometry of $\Omega^M$ enters (only) through the global fatness indicator $d_F$.
This proves that in cases where one has a very large number of balls ($M \to \infty$) but moderate values of the geometry indicators in $\cG_L^M$  and of $d_F$, the convergence of the Schwarz domain decomposition method is expected to be fast. 
Furthermore, if  the number $M$ of balls is increased, but the values of these local geometry indicators and of $d_F$ are uniformly bounded with respect to $M$, the convergence of the  Schwarz  domain decomposition method does not deteriorate.  A worst case scenario (globular domain) is $d_F \sim M^\frac13$ ($M \to \infty$). In that case, due to $|E|_{1,\Omega^M}= (1- \frac{1}{1+s_0})^\frac12$, the number of iterations $\ell$ of the Schwarz domain decomposition method that is needed to obtain a given accuracy, scales like
\[ 
  \ell \sim s_0 \sim  d_F^2 \sim M^\frac23.
\]
\rev{A symmetrized version of the Schwarz domain decomposition method  can be combined with a conjugate  gradient acceleration, cf. \cite{Xuoverview}. This results in an improved scaling of the number of iterations, namely} $\ell \sim M^\frac13$.   

\section{An additional coarse global  space} \label{sectcoarse}
In case of a globular domain $\Omega^M$ both numerical experiments and the theory presented above show that the convergence of the Schwarz domain decomposition method can be  slow for very large $M$. 
As is well-known from the field of domain decomposition methods (and subspace correction methods)  this deterioration can be avoided by introducing a suitable ``coarse level space''. In this section we propose such a space:
\[
  V_0:={\rm span}\{ \, \theta_i~|~i \in \cI_{\intset}\,\} \subset H_0^1(\Omega^M)
\]
(we assume $\cI_{\intset}\neq \emptyset $, otherwise we use Theorem~\ref{thmstab}, Eqn. \eqref{resmain1}). The corresponding projection $P_0:\, H_0^1(\Omega) \to V_0$ is such that $a(P_0v,w_0)=a(v,w_0)$ for all $w_0 \in V_0$.
In the definition of $V_0$ it is important to restrict to $i \in \cI_{\intset}$ (instead of $i \in \cI$) because for $i \in \cI_{\bset}$ the partition of unity functions $\theta_i$ are not necessarily contained in $H_0^1(\Omega^M)$.
In the Schwarz method one then has to solve for an additional correction in $V_0$: 
 $e_0 \in V_0$ such that
\[ a(e_0,v_0)=f(v_0)-a(u_{i-1}^{\ell-1},v_0) \quad \text{for all}~~v_0 \in V_0.
 \]
Using the basis $(\theta_i)_{i \in \cI_{\intset}}$ in $V_0$ this results in   a sparse linear system of (maximal) dimension $\sim M \times M$. In practice this coarse global system will be solved approximately by a multilevel technique, cf. Remark~\ref{remapproximate} below. 

The analysis of the method with the additional correction in the space $V_0$ is again based on Lemma~\ref{lemmaXu}, which is also valid if we use the decomposition $H_0^1(\Omega)= V_0 +\sum_{i=1}^M H_0^1(\Omega_i)$ (and include $i=0$ in the sum in Lemma~\ref{lemmaXu}). 
For the analysis (only) we need a ``suitable'' linear mapping $Q_0: H^1_0(\Omega^M) \to V_0$. A natural choice is the following:
\[
  Q_0v= \sum_{i \in \cI_{\intset}} \bar v_i \theta_i, \qquad \bar v_i:= \frac{1}{|\Omega_i|} \int_{\Omega_i} v \, dx.
\]
For deriving properties of this mapping we introduce additional notation:
\begin{align*}
  \cN_i^\ast &
  := 
  \{\, j \in \cI_{\intset}~|~ \Omega_j \cap \Omega_i \neq \emptyset \, \} \subset \cN_i, \quad i \in \cI, 
  \\
  \cI_{\intset}^\ast
  &:=
   \{\, i\in  \cI_{\intset}~|~ [\, \Omega_j \cap \Omega_i \neq \emptyset\,] ~\Rightarrow ~j \in \cI_{\intset}\,\}, ~~\cI_{\bset}^\ast= \cI \setminus \cI_{\intset}^\ast,
  \\
  \Omega_i^\ast 
  &:= 
  \cup_{j \in \cN_i^\ast \cup \{i\}} \Omega_j,  \quad i \in \cI.
\end{align*}
The index  set $\cI_{\intset}^\ast$ contains those $i \in \cI_{\intset}$ for which the ball  $\Omega_i$ has only neighboring balls that are interior balls. For  $i \in \cI_{\intset}^\ast$ we have $\cN_i^\ast=\cN_i$.
We also need two further local geometry indicators:
\begin{align}
 	\Nzero^\ast~\text{is the smallest integer such that:}&~\max_{x\in \Omega^M}{\rm card}\{\, j~|~x\in \Omega_j^\ast\,\} \leq \Nzero^\ast. \label{cardA} \\
    q_i:=\max_{j \in \cN_i^\ast} \frac{|\Omega_i|}{|\Omega_j|}, &\quad q_{\max} :=\max_{i \in \cI} q_i.
\end{align}
The indicator in \eqref{cardA}  is a variant of the maximal overlap indicator $N_0$ in \eqref{card} (with $\Omega_j$ replaced by $\Omega_j^\ast$) \rev{and $q_{\max}$ essentially measures the maximal variation in radii of neighboring balls.} Below we use the standard Sobolev norm on $H^1(\Omega_i)$, denoted by $\|\cdot\|_{1,\Omega_i}$.
\begin{lemma} There are constants ${\sf B}_1,{\sf B}_2$,  depending only on the local geometry indicators $\Nzero, \Nzero^\ast,\gint, q_{\max}, R_{\max}, \Nmax$ such that:
\begin{align}
  \|v - Q_0v\|_{1,\Omega_i} &\leq |v|_{1,\Omega_i} + {\sf B}_1 \|v\|_{L^2(\Omega_i^\ast)}\quad \text{for all}~~i\in \cI,~~v \in H^1(\Omega_i^\ast), \label{RR1} \\
 \|v - Q_0v\|_{1,\Omega_i} & \leq  {\sf B}_2 |v|_{1,\Omega_i^\ast}\quad \text{for all}~~i\in \cI_{\intset}^\ast,~~v \in H^1(\Omega_i^\ast). \label{RR2}
\end{align}
\end{lemma}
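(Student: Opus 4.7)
The plan is to prove both estimates by a direct decomposition of $v-Q_0 v$ on $\Omega_i$. The four ingredients are: (i)~the \emph{locality} $(Q_0 v)|_{\Omega_i}=\sum_{j\in\cN_i^\ast}\bar v_j\theta_j$, since $\operatorname{supp}\theta_j\subset\bar\Omega_j$ and $j\in\cI_{\intset}$ is required in the definition of $Q_0$; (ii)~the gradient bound $\|\nabla\theta_j\|_{L^\infty(\Omega_j)}\le N_0/\gint$ for $j\in\cI_{\intset}$ from Lemma~\ref{lemPU}; (iii)~the elementary estimate $|\bar v_j|\le |\Omega_j|^{-1/2}\|v\|_{L^2(\Omega_j)}$; and (iv)~a Poincar\'e-type inequality on $\Omega_i^\ast$ whose constant depends only on the local geometry indicators.

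For the weaker estimate \eqref{RR1}, valid for all $i\in\cI$, I would simply use the triangle inequality
\[
\|v-Q_0 v\|_{1,\Omega_i}\le |v|_{1,\Omega_i}+\|v\|_{L^2(\Omega_i)}+\|Q_0 v\|_{L^2(\Omega_i)}+|Q_0v|_{1,\Omega_i},
\]
and bound the last two terms directly. Expanding $Q_0 v$ via (i) and combining (ii), (iii) with $|\Omega_i|/|\Omega_j|\le q_{\max}$ yields
\[
\|Q_0 v\|_{L^2(\Omega_i)}\le \Nzero^\ast q_{\max}^{1/2}\|v\|_{L^2(\Omega_i^\ast)},\qquad |Q_0v|_{1,\Omega_i}\le \Nzero^\ast\tfrac{\Nzero}{\gint}q_{\max}^{1/2}\|v\|_{L^2(\Omega_i^\ast)}.
\]
Together with the trivial $\|v\|_{L^2(\Omega_i)}\le\|v\|_{L^2(\Omega_i^\ast)}$, this gives \eqref{RR1} with $\sB_1$ depending only on $\Nzero,\Nzero^\ast,\gint,q_{\max}$.

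For the refined estimate \eqref{RR2}, the key observation is that for $i\in\cI_{\intset}^\ast$ one has $\cN_i^\ast=\cN_i$ and $\sum_{j\in\cN_i}\theta_j\equiv 1$ on $\Omega_i$, so on $\Omega_i$ we may rewrite
\[
v-Q_0 v=\sum_{j\in\cN_i}(v-\bar v_j)\theta_j,\qquad \nabla(v-Q_0 v)=\nabla v+\sum_{j\in\cN_i}(v-\bar v_j)\nabla\theta_j,
\]
where the gradient identity uses both $\sum_j\theta_j=1$ and $\sum_j\nabla\theta_j=0$ on $\Omega_i$. Bounding pointwise and integrating, together with (ii), reduces both the $L^2$ part and the seminorm part of $\|v-Q_0 v\|_{1,\Omega_i}$ to control of $\|v-\bar v_j\|_{L^2(\Omega_i^\ast)}$ by $|v|_{1,\Omega_i^\ast}$ for each $j\in\cN_i$.

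The main obstacle is therefore establishing the Poincar\'e-type inequality
\[
\|v-\bar v_j\|_{L^2(\Omega_i^\ast)}\le C_P\,|v|_{1,\Omega_i^\ast},\qquad j\in\cN_i,
\]
with $C_P$ depending only on the local geometry indicators. My plan is to combine the classical Poincar\'e--Wirtinger inequality on $\Omega_i^\ast$ (applied with the average $\bar v_{\Omega_i^\ast}$) with the shift bound $|\bar v_{\Omega_i^\ast}-\bar v_j|\le |\Omega_j|^{-1/2}\|v-\bar v_{\Omega_i^\ast}\|_{L^2(\Omega_i^\ast)}$; the resulting factor $(|\Omega_i^\ast|/|\Omega_j|)^{1/2}$ is controlled by $(\Nzero^\ast q_{\max})^{1/2}$. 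The remaining subtle point is the dependence of the Poincar\'e--Wirtinger constant on the geometry of $\Omega_i^\ast$: since $\Omega_i^\ast$ is a connected union of at most $\Nzero^\ast+1$ overlapping balls with radii in $[R_{\min}^\infty,R_{\max}^\infty]$ and interior overlap uniformly controlled by $\gint$, it is a bounded Lipschitz (in fact John) domain whose diameter and shape parameters are bounded in terms of $R_{\max},\Nzero^\ast,\gint$, yielding the desired bound on $C_P$. Collecting all contributions gives \eqref{RR2} with $\sB_2$ depending only on $\Nzero,\Nzero^\ast,\gint,q_{\max},R_{\max},\Nmax$.
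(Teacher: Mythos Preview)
Your argument is correct and for \eqref{RR1} it is essentially identical to the paper's: same locality observation, same gradient bound on $\theta_j$ for $j\in\cI_{\intset}$, same control of $|\bar v_j|$.

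For \eqref{RR2} you take a genuinely different (though closely related) route. The paper does not decompose $v-Q_0v=\sum_{j\in\cN_i}(v-\bar v_j)\theta_j$ directly. Instead it exploits that for $i\in\cI_{\intset}^\ast$ the operator $Q_0$ reproduces constants on $\Omega_i$, i.e.\ $(Q_0\hat c)_{|\Omega_i}=\hat c$, and simply applies the already-proved estimate \eqref{RR1} to $v-\hat c$:
\[
\|v-Q_0v\|_{1,\Omega_i}=\|(v-\hat c)-Q_0(v-\hat c)\|_{1,\Omega_i}\le |v|_{1,\Omega_i}+\sB_1\|v-\hat c\|_{L^2(\Omega_i^\ast)},
\]
then chooses $\hat c$ as the mean over $\Omega_i^\ast$ and invokes a single Poincar\'e--Friedrichs inequality on $\Omega_i^\ast$ (citing a result from the literature whose constant depends on $\Nmax$ and $R_{\max}$). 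This avoids your shift-of-mean step from $\bar v_{\Omega_i^\ast}$ to each $\bar v_j$ and the associated factor $(|\Omega_i^\ast|/|\Omega_j|)^{1/2}$. Both arguments land on exactly the same bottleneck---a Poincar\'e inequality on the union of balls $\Omega_i^\ast$ with constant controlled by local indicators---so the difference is one of economy rather than substance: the paper's constant-subtraction trick reuses \eqref{RR1} and is a couple of lines shorter, while your direct decomposition makes the role of the partition of unity more transparent.
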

\begin{proof}
Take $i \in \cI$, $v \in H^1(\Omega_i^\ast)$.  Note $(Q_0v)_{|\Omega_i} = \big( \sum_{j \in \cN_i^\ast} \bar v_j \theta_j\big)_{|\Omega_i}$. Since $\cN_i^\ast \subset \cI_{\intset}$ we have $\|\nabla \theta_j\|_{\infty,\Omega_i} \leq \frac{\Nzero}{\gint}$ for $j \in \cN_i^\ast$, hence, $\|  \theta_j\|_{1,\Omega_i}^2 \leq |\Omega_i|\left(1+\big(\frac{\Nzero}{\gint}\big)^2\right)$. Furthermore, $|\bar v_j| \leq |\Omega_j|^{-\frac12} \|v\|_{L^2(\Omega_j)}$. Using this we obtain
\begin{align*}
 	\|v - Q_0v\|_{1,\Omega_i} 
 	& \leq 
 	\|v\|_{1,\Omega_i} + \Big\|\sum_{j\in \cN_i^\ast}  \bar v_j \theta_j\Big \|_{1,\Omega_i}  
 	 \leq  
 	|v|_{1,\Omega_i} + \|v\|_{L^2(\Omega_i)} + \sum_{j\in \cN_i^\ast}  |\bar v_j| \|\theta_j \|_{1,\Omega_i} 
 	\\
 	& \leq  
 	|v|_{1,\Omega_i} + \|v\|_{L^2(\Omega_i)}  + \left(1+\frac{\Nzero}{\gint} \right)\max_{j \in \cN_i^\ast} \left(\frac{|\Omega_i|}{|\Omega_j|}\right)^\frac12 \sum_{j \in \cN_i^\ast} \|v\|_{L^2(\Omega_j)}
 	\\
	& \leq 
	|v|_{1,\Omega_i} + {\sf B}_1 \|v\|_{L^2(\Omega_i^\ast)},
\end{align*}
with ${\sf B}_1=1+\left(1+\frac{\Nzero}{\gint}\right) q_{\max}^\frac12 \Nzero^\ast$, which proves the result \eqref{RR1}. 

Now take $i \in \cI_{\intset}^\ast$, hence $(\sum_{j \in \cN_i^\ast} \theta_j)_{|\Omega_i}=1$. This implies that for an arbitrary constant $\hat c$ we have
\[
   (Q_0 \hat c)_{|\Omega_i}= \Big(\sum_{j \in  \cN_i^\ast} \hat c \, \theta_j\Big)_{|\Omega_i} = \hat c. 
\]
Using the estimate from above yields, for arbitrary $v \in H^1(\Omega_i^\ast)$
\[
	 \|v - Q_0v\|_{1,\Omega_i}
 	=
 	\|v-\hat c - Q_0(v-\hat c)\|_{1,\Omega_i} \leq |v|_{1,\Omega_i} + {\sf B}_1   \|v-\hat c\|_{L^2(\Omega_i^\ast)}
\]
for an arbitrary constant $\hat c$. Take $\hat c := \frac{1}{|\Omega_i^\ast|} \int_{\Omega_i^\ast} v \, dx$, hence $\int_{\Omega_i^\ast} v-\hat c \, dx =0$. We now apply a Friedrichs inequality to the term $\|v-\hat c\|_{L^2(\Omega_i^\ast)}$. The domain $\Omega_i^\ast$ has a simple structure, namely the union of a few (at most $\Nmax$) balls. Hence the constant in the Friedrichs inequality can be quantified.  Theorem 3.2 in \cite{Zheng} yields an estimate in which the constant $c_F$ depends only on the number $\Nmax$ and the diameters $R_j$, $j \in \cN_i^\ast$. This yields
\[ \|v-\hat c\|_{L^2(\Omega_i^\ast)} \leq c_F(\Nmax,R_{\max}) |v|_{1,\Omega_i^\ast}.
\]
Hence, \eqref{RR2} holds, with ${\sf B}_2=1+{\sf B}_1 c_F(\Nmax,R_{\max})$.
\end{proof}
\ \\[1ex]
In the derivation of the main result in Theorem~\ref{thmstab2} below we have to control $\|v\|_{L^2(\Omega_b^\ast)}$, on a ``boundary strip'' $\Omega_b^\ast:=\cup_{i \in \cI_b^\ast} \Omega_i^\ast$, in terms of $|v|_{1,\Omega^M}$. For this we use arguments, based on the Hardy inequality, very similar to the ones used in the previous sections. Denote the union of all balls that have a nonzero overlap with $\Omega_i$ by $\Omega_i^e:=\cup_{j \in \cN_i} \Omega_j$. Note that for $i \in \cI_b^\ast$ the ball $\Omega_i$ has an overlapping neighboring ball $\Omega_j \subset \Omega_b$, and thus $\partial \Oie \cap \partial \Omega^M=:\Gie \neq \emptyset$. We need a variant of the local exterior fatness Indicator~\ref{indic45}.
For $ i \in \cI_b^\ast$ and $x \in \Omega_i^\ast$ let   $p(x)$ be the closest point projection on $\Gie$.  Let   $\hat c_i^\ast >0$ be such that
\begin{equation} \label{fatness1A}
	 |B\big(p(x); \|p(x)-x\|\big)\cap \Omega^{M,c}| \geq \hat c_i^\ast |B\big(p(x),\|p(x)-x\|\big)|, \quad \text{for all}~~x \in \Omega_i^\ast,
\end{equation}
cf.~\eqref{fatness1}, and define 
\begin{equation} \label{fatnessA}
	\cf^\ast:= \min_{i\in \cI_b^\ast} \hat c_i^\ast > 0.   
\end{equation}
The quantity $\cf^\ast$ is a \emph{local} indicator because $\hat c_i^\ast$ depends only on a small neighbourhood of $\Omega_i^e$.
\begin{lemma} \label{lem9}
There exists a constant ${\sf B}_3$, depending only on the local geometry parameters $\cf^\ast$ and $R_{\max}$, such that
\begin{equation}
 \|v\|_{L^2(\Omega_b^\ast)} \leq {\sf B}_3 |v|_{1,\Omega^M} \quad \text{for all}~~v \in H_0^1(\Omega^M).
\end{equation}
\end{lemma}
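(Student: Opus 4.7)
The plan is to mimic the argument used in Corollary~\ref{Coro1}, but applied on the extended boundary strip $\Omega_b^\ast$ and using the local fatness indicator $\cf^\ast$ in place of $\cf$. The key new ingredient is that, since $\Omega_b^\ast$ is a union of sets $\Omega_i^\ast$ each contained in the extended neighborhood $\Oie$ whose diameter is controlled purely by $R_{\max}$, one can afford to replace the reciprocal distance weight $1/{\rm dist}(x,\partial\Omega^M)$ by an absolute constant and still obtain an $L^2$-bound on $v$ itself.

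By density it suffices to take $v\in C_0^\infty(\Omega^M)$. Fix $x\in \Omega_b^\ast$ and pick $i\in \cI_\bset^\ast$ with $x\in \Omega_i^\ast$. Since $\Omega_i^\ast \subset \Oie$ and each $\Oie$ is a union of balls of radius $\le R_{\max}$ all overlapping the ball $\Omega_i$, a triangle inequality through $m_i$ shows ${\rm diam}(\Oie)\le 6R_{\max}$. Since $i\in\cI_\bset^\ast$ implies $\Gie\neq\emptyset$, we therefore have, taking $b(x):=p(x)\in \Gie\subset\Omega^{M,c}$ the closest point projection on $\Gie$,
\[
  d(x):=\|x-b(x)\|={\rm dist}(x,\Gie)\le 6R_{\max}.
\]
By the local exterior fatness property \eqref{fatness1A} with constant $\hat c_i^\ast \ge \cf^\ast$, the hypothesis of Lemma~\ref{LemHardy} is satisfied at $x$ with $\gamma(x)=\cf^\ast$, so
\[
  |v(x)|\le \frac{c_\cH}{\cf^\ast}\,d(x)\,\cM(\|\nabla v\|)(x)\le \frac{6 c_\cH R_{\max}}{\cf^\ast}\cM(\|\nabla v\|)(x).
\]

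Squaring, integrating over $\Omega_b^\ast\subset \R^3$, applying the maximal function estimate \eqref{estimatemaximal} to $\|\nabla v\|$ (extended by zero outside $\Omega^M$), and using $\|\nabla v\|_{L^2(\R^3)}=|v|_{1,\Omega^M}$ yields
\[
  \|v\|_{L^2(\Omega_b^\ast)}\le \frac{6 c_\cH R_{\max}}{\cf^\ast}\,\|\cM(\|\nabla v\|)\|_{L^2(\R^3)}\le \frac{6 c_\cH c_\cM R_{\max}}{\cf^\ast}\,|v|_{1,\Omega^M},
\]
which proves the claim with ${\sf B}_3:=6 c_\cH c_\cM R_{\max}/\cf^\ast$. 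The only nontrivial step is the uniform upper bound on $d(x)$, which is really just geometric bookkeeping using that $\Oie$ is a union of $O(\Nmax)$ balls of controlled radius; everything else is a direct transcription of the pointwise-Hardy $+$ maximal-function argument used earlier.
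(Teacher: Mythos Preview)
Your proof is correct and follows essentially the same route as the paper: bound $d(x)=\|x-p(x)\|$ by a constant multiple of $R_{\max}$ using the bounded diameter of $\Oie$, invoke the pointwise Hardy inequality of Lemma~\ref{LemHardy} with $\gamma(x)=\cf^\ast$, and finish with the maximal-function bound \eqref{estimatemaximal} and density. The only difference is cosmetic: the paper asserts $d(x)\le 4R_{\max}$ where you obtain $6R_{\max}$, but this affects only the numerical value of ${\sf B}_3$ and not the argument.
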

\begin{proof}
For $x \in \Omega_b^\ast$ we have $d(x)=\|x-p(x)\| \leq 4 R_{\max}$. Using \eqref{fatness1A} and the Hardy inequaltiy in Lemma~\ref{LemHardy} we obtain, for $v\in C_0^\infty(\Omega^M)$:
\[ |v(x)| \leq \hat c \, \cM(\|\nabla v\|)(x), \quad \hat c:=c_\cH\frac{4 R_{\max}}{\cf^\ast}.
\]
Combining this with \eqref{estimatemaximal} yields
\[
 \|v\|_{L^2(\Omega_b^\ast)} \leq \hat c\, c_{\M} |v|_{1,\Omega^M}.
\]
Application of a density argument completes the proof.
\end{proof}

Using the two lemmas above  we derive the following main result for the stability quantity $s_0$. 
\begin{theorem} \label{thmstab2}
 For the stability constant $s_0$ the bound
 \[
  s_0 \leq \sC_0
 \]
holds, with a constant $\sC_0$ that depends only on the local geometry indicators in $\cG_L^M$ and on  $N_0^\ast$, $q_{\max}, \cf^\ast$.
\end{theorem}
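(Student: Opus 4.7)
The plan is to apply Lemma~\ref{lemmaXu} to the enlarged decomposition $H_0^1(\Omega^M) = V_0 + \sum_{i=1}^M H_0^1(\Omega_i)$, treating $V_0$ as a zeroth subspace so that the sum in the definition of $s_0$ now runs over $i = 0, 1, \ldots, M$. For given $v \in H_0^1(\Omega^M)$ I take the natural splitting $v_0 := Q_0 v \in V_0$ and $v_i := \theta_i w \in H_0^1(\Omega_i)$ for $i \geq 1$, where $w := v - Q_0 v \in H_0^1(\Omega^M)$. The partition of unity identity immediately yields $v_0 + \sum_i v_i = v$, and membership of each $\theta_i w$ in $H_0^1(\Omega_i)$ follows by the same argument leading to \eqref{res1cor} applied to $w$. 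The $i=0$ term is then controlled by $|P_0 w|_{1,\Omega^M}^2 \leq |w|_{1,\Omega^M}^2$, and for $i \geq 1$ the bounded-overlap argument from the proof of Theorem~\ref{thmstab} together with Theorem~\ref{thm1} applied to $w$ gives
\[
 \sum_{i=1}^M \Big|P_i \sum_{j=i+1}^M v_j\Big|_{1,\Omega^M}^2 \leq \Nmax \sum_{i=1}^M |\theta_i w|_{1,\Omega_i}^2 \leq \Nmax \sC_1 |w|_{1,\Omega^M}^2 + \Nmax \sC_2 \sum_{i \in \cI_{\intset}} \|w\|_{L^2(\Omega_i)}^2.
\]

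The proof thus reduces to bounding both $|w|_{1,\Omega^M}^2$ and $\sum_{i \in \cI_{\intset}} \|w\|_{L^2(\Omega_i)}^2$ by constant multiples of $|v|_{1,\Omega^M}^2$, with constants depending only on the admissible indicators. Using that the balls $\{\Omega_i\}$ cover $\Omega^M$ and that $\|\cdot\|_{L^2(\Omega_i)} \leq \|\cdot\|_{1,\Omega_i}$, both quantities are dominated by $\sum_{i=1}^M \|w\|_{1,\Omega_i}^2$. I split this sum according to $\cI = \cI_{\intset}^\ast \cup \cI_{\bset}^\ast$ and apply \eqref{RR2} on the first piece and \eqref{RR1} on the second. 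Summing the $\cI_{\intset}^\ast$-contribution and the gradient piece of the $\cI_{\bset}^\ast$-contribution produces factors of the form $\Nzero^\ast {\sf B}_2^2 |v|_{1,\Omega^M}^2$ and $\Nzero |v|_{1,\Omega^M}^2$ respectively via the overlap indicators $\Nzero^\ast$ and $\Nzero$. The residual $L^2$ piece of \eqref{RR1} sums to at most $\Nzero^\ast {\sf B}_1^2 \|v\|_{L^2(\Omega_b^\ast)}^2$, which is absorbed by Lemma~\ref{lem9} into yet another constant multiple of $|v|_{1,\Omega^M}^2$. Tracking the dependencies of $\sC_1$, $\sC_2$, ${\sf B}_1$, ${\sf B}_2$, ${\sf B}_3$ shows that the resulting constant $\sC_0$ depends only on the indicators in $\cG_L^M$ together with $\Nzero^\ast$, $q_{\max}$, and $\cf^\ast$, as claimed.

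The delicate point --- and the whole reason for introducing $V_0$ --- is that \emph{no} appearance of the global fatness indicator $d_F$ is permitted. In the proof of Theorem~\ref{thmstab} the analogous interior $L^2$-sum was absorbed using the global Poincar\'e estimate Lemma~\ref{Lemma5}, which is precisely what produces the $d_F^2$ factor there. Here, after subtracting $Q_0 v$, the local estimates \eqref{RR1}--\eqref{RR2} dispose of all interior $L^2$ mass, so that the only remaining uncontrolled $L^2$ contribution lives on the boundary strip $\Omega_b^\ast$; on this strip Lemma~\ref{lem9} --- which rests purely on the pointwise Hardy inequality and on the local exterior fatness indicator $\cf^\ast$ --- replaces the global Poincar\'e inequality. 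This substitution is the step at which the construction of $V_0$ pays off and where the main technical care is required.
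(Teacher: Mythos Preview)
Your proposal is correct and follows essentially the same route as the paper's proof: the same decomposition $v_0=Q_0v$, $v_i=\theta_i(v-Q_0v)$, the same reduction via Theorem~\ref{thm1} to controlling $\sum_{i\in\cI}\|v-Q_0v\|_{1,\Omega_i}^2$, the same split into $\cI_{\intset}^\ast\cup\cI_{\bset}^\ast$ handled by \eqref{RR2} and \eqref{RR1} respectively, and the same use of Lemma~\ref{lem9} to absorb the residual $L^2$ term on the boundary strip. Your closing paragraph explaining why $d_F$ never appears is a nice addition.
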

\begin{proof}
For $v \in H_0^1(\Omega^M)$ we define $\hat v_0= Q_0 v$, $\hat v_i:= \theta_i (v - Q_0v) \in H_0^1(\Omega_i)$, i.e., $v = \sum_{i=0}^M \hat v_i$, and $\sum_{i=1}^M \hat v_i= v - \hat v_0$.  
 Using Theorem~\ref{thm1} we obtain, along the same lines as in the proof of Theorem~\ref{thmstab}, \rev{with $\Omega_0:= \Omega^M$,}
\begin{align*} 
 s_0 & =\inf_{\sum_{j=0}^M v_j =v}\sum_{i=0}^M \Big|P_i \sum_{j=i+1}^M v_j \Big|_{1,\Omega_i}^2  
 \leq 
 \sum_{i=0}^M \Big|P_i \sum_{j=i+1}^M  \hat v_j\Big|_{1,\Omega_i}^2  
 \\  &
 \rev{ \leq |P_0(v - \hat v_0)|_{1,\Omega^M}^2 + \sum_{i=1}^M \Big|P_i\sum_{j \in \cN_i} \hat v_j \Big|_{1,\Omega_i}^2
  \leq |v-\hat v_0|_{1,\Omega^M}^2 +  \sum_{i=1}^M \Big|\sum_{j \in \cN_i} \hat v_j \Big|_{1,\Omega^M}^2} \\
  &\leq |v-\hat v_0|_{1,\Omega^M}^2 +  \rev{\Nmax^2} \sum_{i=1}^M | \hat v_i |_{1,\Omega_i}^2 \\
   &  \leq (1+ \rev{\Nmax^2} {\sC}_1)|v- Q_0v|_{1,\Omega^M}^2 + \rev{\Nmax^2} {\sC}_2 \sum_{i\in \cI_{\intset}} \|v- Q_0v\|_{L^2(\Omega_i)}^2 .
\end{align*}
Using
\[
  |v- Q_0v|_{1,\Omega^M}^2 \leq  \sum_{i \in \cI_{\intset}^\ast}|v- Q_0v|_{1,\Omega_i}^2  +  \sum_{i \in \cI_{b}^\ast}|v- Q_0v|_{1,\Omega_i}^2,
\]
and
\[
 \sum_{i\in \cI_{\intset}} \|v- Q_0v\|_{L^2(\Omega_i)}^2 \leq \sum_{i\in \cI_{\intset^\ast}} \|v- Q_0v\|_{L^2(\Omega_i)}^2 + \sum_{i\in \cI_b^\ast} \|v- Q_0v\|_{L^2(\Omega_i)}^2,
\]
we obtain, with $\tilde c:= \max\{1+ \Nmax^2 {\sC}_1, \Nmax^2 {\sC}_2 \}$:
\begin{equation}
 s_0 \leq \tilde c \, \Big( \sum_{i \in \cI_{\intset}^\ast}\|v- Q_0v\|_{1,\Omega_i}^2  +  \sum_{i \in \cI_{b}^\ast}\|v- Q_0v\|_{1,\Omega_i}^2\Big).
\end{equation}
For the term $\sum_{i\in \cI_{\intset}^\ast}$ we  use the result \eqref{RR2}:
\[
  \sum_{i\in \cI_{\intset}^\ast} \|v- Q_0v\|_{1,\Omega_i}^2 \leq {\sf B}_2^2\sum_{i\in \cI_{\intset}^\ast} |v|_{1,\Omega_i^\ast}^2 \leq {\sf B}_2^2 N_0^\ast  |v|_{1, \Omega^M}^2. 
\]
For the term $\sum_{i\in \cI_{b}^\ast}$ we  use the result \eqref{RR1}:
\[ 
\begin{split}
  \sum_{i\in \cI_{b}^\ast} \|v- Q_0v\|_{1,\Omega_i}^2 &  \leq 2 \sum_{i\in \cI_{b}^\ast} |v|_{1,\Omega_i}^2 +
  2 {\sf B}_1^2\sum_{i\in \cI_{b}^\ast} \|v\|_{L^2(\Omega_i^\ast)}^2 \\
   & \leq 2 N_0 |v|_{1, \Omega^M}^2 + 2 {\sf B}_1^2 N_0^\ast \|v\|_{L^2(\Omega_b^\ast)}^2.
  \end{split}
\]
Finally, the term $\|v\|_{L^2(\Omega_b^\ast)}$ can be estimated as in Lemma~\ref{lem9}. Combining these results completes the proof.
\end{proof}
\ \\
\begin{remark} \label{remapproximate} \rm
The Schwarz domain decomposition method  analyzed in this paper is not feasible in practice, because it iterates ``on the continuous level''. In every iteration of the method one has to solve \emph{exactly} a Poisson equation (with homogeneous Dirichlet data) on each of the balls $\Omega_i$, $i=1, \ldots,M$. If the coarse global space $V_0$ is used, this requires the exact solution of a sparse linear system with a matrix of dimension approximately $M \times M$. Note that the local problems are PDEs, whereas the global one is a sparse linear system. In practice, the exact solves on the balls  are replaced by inexact ones, which can be realized very efficiently using harmonic polynomials in terms of spherical coordinates using spherical harmonics. 
These inexact local solves define a \emph{discretization}  of the original global  PDE. If the coarse global space is used, the resulting linear system can be solved using a multilevel technique, which then requires  computational work for the coarse space correction that is  linear in $M$.  
Numerical experiments (so far only for the case without coarse global space) indicate that the method with inexact local solves has convergence properties very similar to the one with exact solves that is analyzed in this paper as the local discretization can be systematically improved. An analysis of the method with inexact local and global solves is a topic for future research. We expect that such an analysis can be done using tools and results from the literature  on subspace correction methods, because in  that framework  the effect of inexact solves on the rate of convergence of the resulting solver has been thoroughly studied, cf. e.g. \cite{Xuoverview}.
\end{remark}

\begin{remark} \label{Remadditive} \rm 
 In a setting with very large scale problems solved on parallel architectures, the \emph{additive} Schwarz domain decomposition method, also called ``parallel subspace correction method'' \cite{Xuoverview}, is (much) more efficient than the Schwarz domain decomposition method considered in this paper (``successive  subspace correction method''). We indicate how the results obtained in this paper can be applied to the setting of an additive Schwarz method. We first consider the case without a coarse  global space. In the addititve method one obtains a \emph{preconditioned system} with a symmetric positive definite operator $T:=\sum_{j=1}^M P_j$, cf.~\cite{Xu,Toselli}. The quality of the additive Schwarz preconditioner is measured using the condition number ${\rm  cond}(T)=\frac{\lambda_{\max}(T)}{\lambda_{\min}(T)}$. \rev{A (sharp) bound for this condition number is derived in \cite[Theorem 2.7]{Toselli}. This bound depends on three parameters, denoted by $\omega$, $\rho(\cE)$, $C_0$, cf. \cite{Toselli} for definitions. In the case of ``exact subspace solvers'' that we consider, we have $\omega=1$. We use a (stable) decomposition as in Theorem~\ref{thm1}. Due to $a(\theta_i v, \theta_j v)= 0 $ for all $j \notin \cN_i$ it follows that $\rho(\cE) \leq N_{\max}$. The parameter $C_0$ quantifies the stability of the decomposition. For this essentially the same bounds as for $s_0$ in Theorems~\ref{thmstab} and \ref{thmstab2} hold.} 
\end{remark}

\section{Numerical experiments} \label{Sectexperiments}
%
We only present a few numerical examples in order to complement results already available in the literature. In~\cite{ddCOSMO1} results of numerical experiments are presented which show that the method scales linearly in the number of balls in linear chains of same-sized balls in three dimensions. In~\cite{ciaramella2018analysis} results that show linear scaling are given for more general essentially one-dimensional structures, but limited to two spatial dimensions. 
Numerical results of the domain decomposition method applied to real molecules are presented in \cite{ddCOSMO2}, more precisely treating alanine chains and compounds of one to several hemoglobin units.
Further~\cite{ciaramella2019scalability} sheds light on the difference in terms of the number of iterations between chain-like versus globular biologically relevant molecules and the sensitivity with respect to the radii $R_i$. 

Here we restrict to a few theoretical scenarios, i.e. we consider the center of the balls to lie on a regular unit lattice $\mathcal L_{\bf n}$, for ${\bf n} = (n_x,n_y,n_z)$, defined by $\mathcal L_{\bf n} = [1,n_x]\times [1,n_y] \times [1,n_z] \cap \mathbb N^3$.
We take all radii  the same and equal to 0.9 in order that no inner holes appear in the structures and that there is significant overlap between neighbouring balls.
We consider three different cases:
\begin{itemize}
\item Case 1: $n_x, n_y$ are fix and $n_z=n\in\mathbb N$ is growing. This represents a lattice that is growing in one direction. 
\item Case 2: $n_x$ is fix and $n_y=n_z=n\in\mathbb N$ is growing. This represents a lattice that is growing in two directions.
\item  Case 3: $n_x=n_y=n_z=n\in\mathbb N$ is growing. This represents a lattice that is growing in all three directions.
\end{itemize}
Case 3 differs from the first two cases in the sense that for the global fatness indicator $d_F$ we have, in the limit $n\to\infty$, $d_F \sim  n$ in former case, whereas for Case 1 and 2, $d_F$ is uniformly bounded (with respect to $n$), but depends on $n_x, n_y$ and $n_x$, respectively. 

\rev{
Since the contraction behaviour only depends on the operator, we consider the simple setting of $f=0$, i.e. $-\Delta u =0$ with trivial solution, and observe the contraction of a given non-trival starting function $u^0$ to $u\equiv 0$. Indeed,  $u^0$ is chosen equal to one in the inner balls of the domain and  smoothly decreases to zero boundary values within the boundary layer of balls.
The local problems are solved using spherical harmonics times a radial monomial. Indeed, since we know a priori that the solution is harmonic we use a spectral method with harmonic basis functions of the form $r^k \, Y_{km}(\theta,\varphi)$, where $(r,\theta,\varphi)$ denote the spherical coordinates with respect to the balls center and $Y_{km}$ the real-valued spherical harmonics. We choose the degree of spherical harmonics high enough such that the approximation error is not affecting the iterative process.

Figure \ref{fig:Lattice} illustrates the number of iterations of the algorithm presented in~\eqref{SSC} with and without coarse correction to reach a relative error criterion $|u^\ell|_{1,\Omega^M} < 10^{-6} \, |u^0|_{1,\Omega^M}$.
As predicted by the theory presented in this paper, we have a bounded number of iterations in Case 1 and 2 (with dependency on $n_x$ (and $n_y$ in Case 2), while Case 3 shows a growing number of iterations that is approximately linear in $n$ if no coarse global space correction is employed. 
On the other hand, all cases show a bounded number of iterations, as predicted by the theory, if the coarse global space correction is employed.
}

\begin{figure}[t!]
	\centering
	\includegraphics[trim=0pt 10pt 0pt 0pt, clip,scale=0.3]{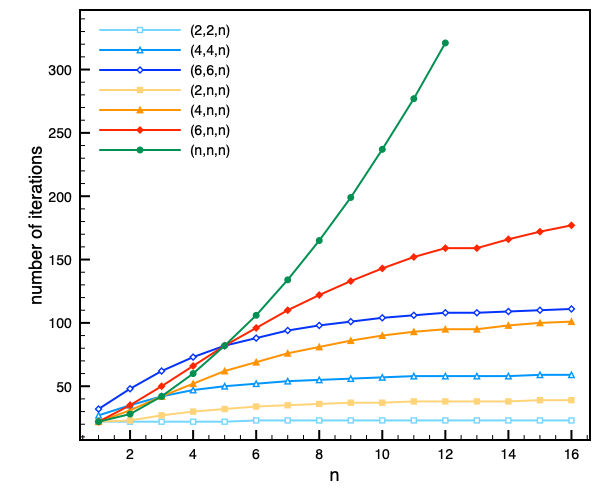}
	\includegraphics[trim=0pt 10pt 0pt 0pt, clip,scale=0.3]{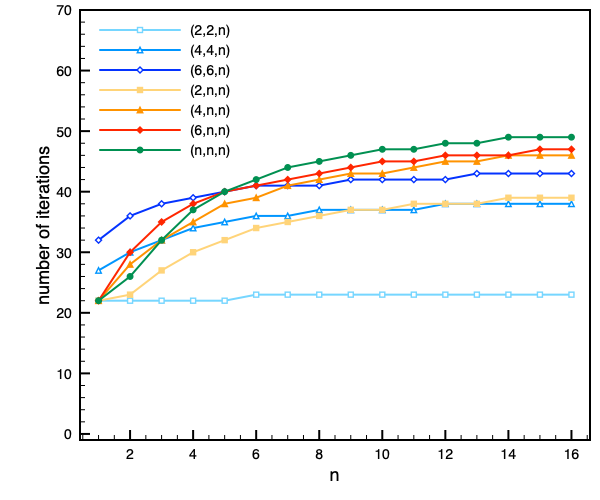}
	\caption{Number of iterations of the Algorithm~\eqref{SSC} to reach a fixed error tolerance for different lattice-structures labelled by the description $(n_x,n_y,n_z)$ without (left) and with (right) coarse correction.}
	\label{fig:Lattice}
\end{figure}

\section{Conclusion and outlook}
In this paper we presented an analysis of the rate of convergence of the overlapping Schwarz domain decomposition method applied to the Poisson problem on a special family of domains, motivated by applications in implicit solvation models in computational chemistry. The analysis uses the framework of subspace correction methods, in which the contraction number of the error propagation operator (in the natural energy norm) can be expressed in only one stability parameter, namely $s_0$ appearing in Lemma~\ref{lemmaXu}. 
We investigate how this stability parameter depends on relevant geometric properties of the domains, such as the amount of overlap between neighbouring balls and the exterior fatness of the domain. The latter plays a key role in the analysis of a Poincar\'e inequality. To formalize the dependence of the constants in the estimates on relevant geometric properties we introduced local geometry indicators, cf.~\eqref{locgeo}, and a global geometry indicator $d_F$ \eqref{indDf}. In view of our applications it is reasonable to assume that the local indicators are uniformly (in the number of subdomains) bounded. We derive bounds for the stability parameter $s_0$ as presented in the main Theorem~\ref{thmstab}. 
The results of this theorem 
show that the rate of convergence of the overlapping Schwarz domain decomposition method can deteriorate in situations with very large $d_F$ values and remains constant for (possibly complex and non-trivial) geometrical structures where $d_F$ remains constant, which is the case in the majority of cases when dealing with biochemical molecules. This result provides therefore a theoretical justification of the performance of the solvers observed in implicit solvation models.
In cases where $d_F$ is large, the efficiency of the method can be significantly improved by using an additional coarse global space, as explained in section~\ref{sectcoarse}. Including such a space results in a uniform bound for $s_0$ that in particular does not depend on $d_F$, cf. Theorem~\ref{thmstab2}.

In this paper we restricted the analysis to the case with exact subspace solvers, both for  the local Poisson problems in $H_0^1(\Omega_i)$ and the Poisson problem in the coarse global space $V_0$. In future work we want to study the effect of inexact solvers by means of a local discretization error. In recent years the method without the coarse global space has been used for the efficient simulation of many complex applications in the field of implicit solvation models. So far we did not perform a systematic numerical study of the method with the coarse global space. We plan to do this in the near future. 

\section{Appendix} 
 In this appendix we  give a proof of Lemma~\ref{lem:DistEq} and  a derivation of the result stated in Remark~\ref{remsmoothness}.
\subsection{Proof of Lemma~\ref{lem:DistEq}} 
\label{App1} 
\begin{proof}
Define $\alpha=\frac{\pi}{2}-\beta^\infty$, where $\beta^\infty$ is the minimal angle (half of the aperture) constant of Assumption {\bf (A4)}, and
\begin{align*}
	\varepsilon := R^\infty_{\rm min}(1-\sin(\alpha)).
\end{align*}
We introduce the splitting of $\Omega_{\bset}$ into 
\begin{align*}
	\Omega_\varepsilon &= \{ x \in \Omega_{\bset}  \;|\; \delta(x) < \varepsilon \}, \qquad
	\Omega_\varepsilon^{\rm c} = \{ x \in \Omega_{\bset}  \;|\; \delta(x) \ge \varepsilon \}.
\end{align*}
First, for 
  $x\in \Omega_\varepsilon^{\rm c}$ we have
$
	\frac{\delta(x)}{{\rm dist}(x,  \partial \Omega^M)}	
	\ge  
	 \frac{\varepsilon}{2R^\infty_{\max}}
$.
Hence, 
\begin{equation} \label{Est1}
	 	\delta(x) \geq  \tfrac{R^\infty_{\rm min}(1-\sin(\alpha))}{2R^\infty_{\max}}\,{\rm dist}(x,  \partial \Omega^M) \quad \text{for all}~x \in \Omega_\varepsilon^{\rm c}.
\end{equation} 
Second, we consider $x\in \Omega_\varepsilon$. 
We start with some preliminary consideration.
We denote by $\sphericalangle(v,w) := \arccos\left(\frac{v\cdot w}{\|v\|\|w\|}\right)$ the angle between two vectors $v,w\in\R^3$.
The circular cone with apex $y\in\R^3$, axis $w\in\R^3$ and aperture $2\alpha$ is denoted by
\[
	K_y(w,\alpha) = \Big\{ z = y + v \in \R^3 \;|\; v\in\R^3, \sphericalangle(w,v) \le \alpha \Big\}.
\]
%
Let  $y = p(x)\in\partial\Omega^M$ denote one of the closest points of $x$ on $\partial \Omega^M$. 
Following the notation introduced in~\cite{Voronoi}, let $\mathbf i =  \mathcal I(y) = \{ i_1,\ldots,i_r\}$ be the maximal set of indices such that 
$
	y\in \bigcap_{i\in\mathcal I(y)} \partial \Omega_i.
$

If $\mathbf i  = \mathcal I(y)=\{i_1\}$, i.e., if $y$ is contained on a spherical patch and only belonging to one sphere, then there  trivially holds
\begin{equation} \label{Est2}
	\delta(x) \ge \delta_{i_1}(x) = \| x - p(x) \| = {\rm dist}(x,\partial \Omega^M).
\end{equation}

Otherwise, define $\mathbf m_{\mathbf i}  = \{m_{i_1},\ldots,m_{i_r}\}$, i.e., the set of the centers of all spheres that contain $y$, and introduce the generalized cone
\begin{align*}
	\mbox{cone}_y(\mathbf m_{\mathbf i}) &:= \left\{ w = y + \sum_{t=1}^r\lambda_t v_{t}  \;\middle|\; 0 \le \lambda_t\right\}, 
	\qquad 
	v_{t} := \frac{m_{i_t}-y}{\|m_{i_t}-y\|}.
\end{align*}
Then, following~\cite{Voronoi}[Theorem 1], there holds that 
$x \in \mbox{cone}_y(\mathbf m_{\mathbf i})$.
We now show that we can cover $\mbox{cone}_y(\mathbf m_{\mathbf i})$ with $r$ circular cones with equal aperture.
Indeed, considering the vector $-n(y)$ from Assumption {\bf (A4)} we have
\[
	-n(y)\cdot v_{t}>\gamma_\alpha^\infty = \cos(\tfrac{\pi}{2}- \beta^\infty)=\cos(\alpha), \qquad \forall t=1,\ldots,r.
\]
Elementary geometrical considerations then yield that we can cover  $\mbox{cone}_y(\mathbf m_{\mathbf i})$ with circular cones $K_y(v_{t},\alpha)$, $t=1,\ldots,r$, of angle $\alpha$, i.e., 
\[
	\mbox{cone}_y(\mathbf m_{\mathbf i}) \subset \bigcup_{t=1}^r K_y(v_{t},\alpha),
\]
see~Figure~\ref{fig:ConeLem3p1} (right) for an elementary illustration. 
Thus there exists a $j=i_s$, such that 
$
	x \in K_y(v_{s},\alpha)
$
holds. 
Then, choose the maximal radius $R_{j,\alpha} = R_j\sin(\alpha)>0$ such that $B(m_j;R_{j,\alpha})$ is entirely contained in the circular cone $K_y(v_{s},\alpha)$; 
see Figure~\ref{fig:ConeLem3p1} for a graphical illustration of the geometric situation.
\rev{
Due to $x \in \Omega_\varepsilon$ we have $\delta_j(x) \leq \delta(x)  < \varepsilon$. 
First, this implies that
\[
	R_j -  \| x -  m_j \| < \varepsilon = R^\infty_{\min}(1-\sin(\alpha)) \le R_j(1-\sin(\alpha)),
\]
and $\| x -  m_j \| > R_j \sin(\alpha) = R_{j,\alpha}$. From this we conclude
\[
	 x \in K_y(v_{j},\alpha) \cap \big( B(m_j;R_j) \setminus \overline{B(m_j;R_{j,\alpha})} \big).
\]
Second, if $\| x- y \| \le R^\infty_{\min} \sin(\beta^\infty) = R^\infty_{\min} \cos(\alpha)$, then $x\in \overline{B(y; R_j\cos(\alpha))}$ and 
%
%
%
Lemma~\ref{lem:calpha} (below) then states that 

\begin{equation}
	\label{eq:EquivShortDist}
	\delta_j(x) \ge \tfrac{\cos(\alpha)}{2} \, {\rm dist}(x,\partial\Omega^M). 
\end{equation}
}
In consequence, equation~\eqref{eq:EquivShortDist} yields
that
\[
	\tfrac{\cos(\alpha)}{2}  \,  {\rm dist}(x,\partial\Omega^M) 
	= \tfrac{\cos(\alpha)}{2} \, {\rm dist}(x,y)
	\le \delta_j(x) 
	\le \delta(x).
\]
\rev{
In the contrary case, i.e. if $\| x- y \| > R^\infty_{\min} \sin(\beta^\infty)$, then
\[
	{\rm dist}(x,\partial\Omega^M) \le 2 R_j \le 2 R^\infty_{\max} \le  \tfrac{2 R^\infty_{\max}}{\gint} \delta(x),
\]
by \eqref{overlap1}.
}
Finally, define (cf. also \eqref{Est1}, \eqref{Est2})
\begin{equation} 
	\label{formg}
	\gamma_\bset = \min\left\{1,\tfrac{R^\infty_{\rm min}(1-\sin(\alpha))}{2R^\infty_{\max}},\tfrac{\cos(\alpha)}{2}\rev{,\tfrac{\gint}{2 R^\infty_{\max}}}\right\},
\end{equation}

and we obtain $\delta(x) \geq \gamma_\bset \, {\rm dist}(x,\partial\Omega^M)$.
\begin{figure}[t!]
	\centering
	\includegraphics[trim=50pt 110pt 480pt 110pt, clip, scale=0.35]{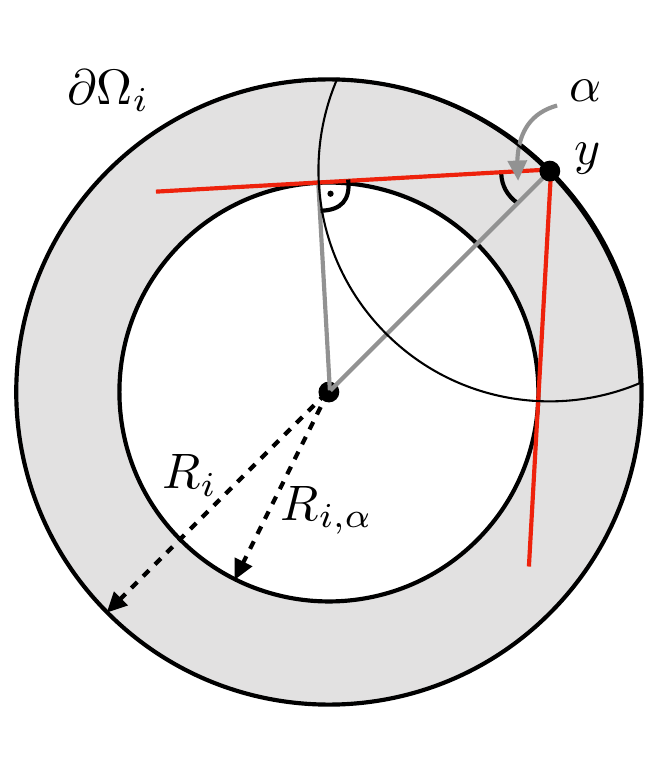}
	\hspace{30pt}
	\includegraphics[trim=50pt 110pt 290pt 110pt, clip, scale=0.35]{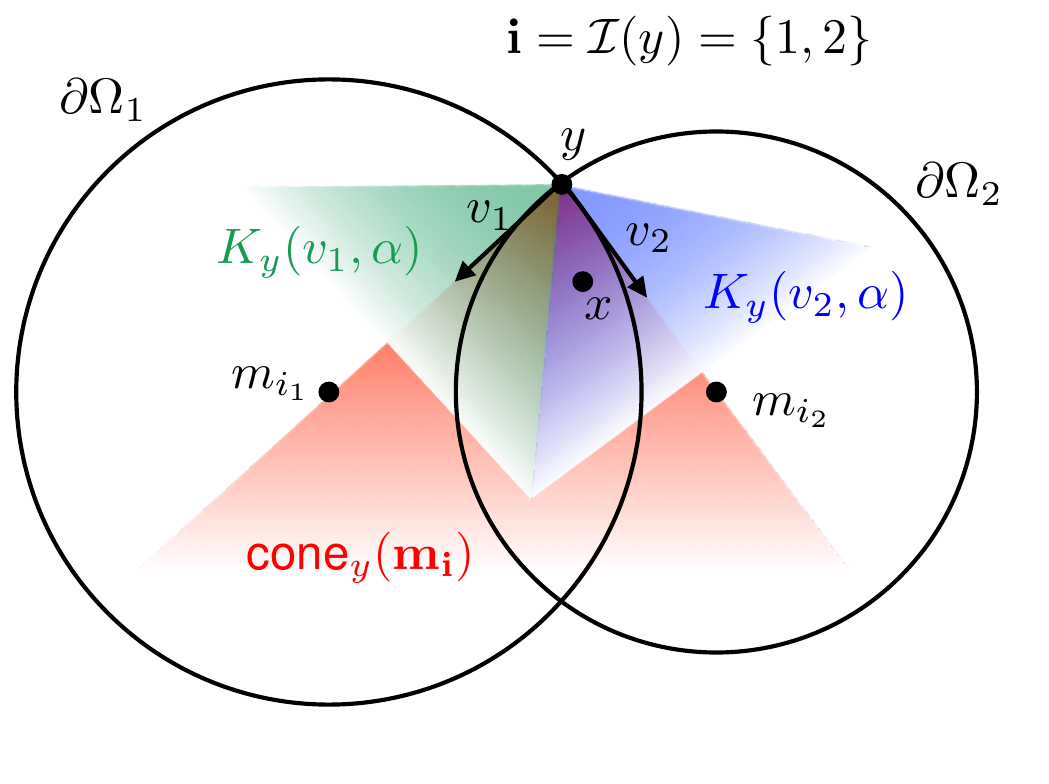}
\caption{Illustration on choosing the maximal radius $R_{i,\alpha}$ (left) and how to cover $\mbox{cone}_y(\mathbf m_{\mathbf i})$ with circular cones $K_y(v_{t},\alpha)$ (right).}
	\label{fig:ConeLem3p1}
\end{figure}
\end{proof}


\begin{lemma}
\label{lem:calpha}
For $y \in \partial \Omega_i$ define $v_{i} := \frac{m_{i}-y}{\|m_{i}-y\|}$.
For given $0<\alpha<\frac{\pi}2 $, consider the circular cone $K_y(v_i,\alpha)$ and 
 let $R_{i,\alpha}>0$ be the minimal radius such that $B(m_i;R_{i,\alpha})$ intersects the boundary $\partial K_y(v_i,\alpha)$, i.e. the maximal radius such that $B(m_i;R_{i,\alpha})$ is contained in the circular cone $K_y(v_i,\alpha)$. 
Then, the maximal value is given by $R_{i,\alpha} = R_i\sin(\alpha)$ and the following holds:
\begin{equation}
	\label{eq:EquivShortDist2}
	\delta_i(x) \ge c_\alpha {\rm dist}(x,y), \qquad \forall x \in K_y(v_i,\alpha)\cap \big( B(m_i;R_i) \setminus \overline{B(m_i;R_{i,\alpha})} \big)\rev{\cap \overline{B(y; R_i\cos(\alpha))}},
\end{equation}
where $c_\alpha = \frac{\cos(\alpha)}{2}>0$.
\end{lemma}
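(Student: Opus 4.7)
The plan is to work in coordinates adapted to the geometry: place $y$ at the origin and $v_i$ along a coordinate axis, so that $m_i = R_i v_i$. For the first claim, I would restrict attention to a $2$D plane containing the axis, in which $\partial K_y(v_i,\alpha)$ reduces to two lines through the origin at angles $\pm \alpha$ from $v_i$. An elementary computation shows that the perpendicular distance from $m_i$ to such a line is $R_i\sin\alpha$, with foot at distance $R_i\cos\alpha$ from $y$ along the generator. This gives the minimal radius of a ball $B(m_i;R)$ meeting $\partial K_y(v_i,\alpha)$, i.e.\ $R_{i,\alpha} = R_i\sin\alpha$.

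For the quantitative bound, I would parameterize an arbitrary $x \in K_y(v_i,\alpha)$ by $\rho := \|x-y\|$ and $\theta := \sphericalangle(x-y,v_i) \in [0,\alpha]$. The law of cosines gives $\|x-m_i\|^2 = R_i^2 + \rho^2 - 2R_i\rho\cos\theta$, and the factorization $a-b = (a^2-b^2)/(a+b)$ then yields
\[
	\delta_i(x) = R_i - \|x-m_i\| = \frac{\rho\,(2R_i\cos\theta - \rho)}{R_i + \|x-m_i\|}.
\]
Bounding the denominator from above by $2R_i$ (using $\|x-m_i\|<R_i$) reduces the required inequality to $2R_i\cos\theta - \rho \ge R_i\cos\alpha$.

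The main obstacle is extracting this last lower bound from the geometric constraints. The key input is $\|x-m_i\| > R_i\sin\alpha$, which via the law of cosines is equivalent to the quadratic inequality $\rho^2 - 2R_i\rho\cos\theta + R_i^2\cos^2\alpha > 0$. Since $\cos\theta \ge \cos\alpha$, the discriminant is nonnegative and the admissible $\rho$ split into two branches: $\rho < R_i(\cos\theta - \sqrt{\cos^2\theta - \cos^2\alpha})$ (``near-side'', between the apex $y$ and the inscribed ball) and $\rho > R_i(\cos\theta + \sqrt{\cos^2\theta - \cos^2\alpha})$ (``far-side'', beyond the inscribed ball). On the near-side branch, a short rearrangement yields $2R_i\cos\theta - \rho > R_i\cos\theta + R_i\sqrt{\cos^2\theta - \cos^2\alpha} \ge R_i\cos\alpha$, which closes the argument. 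Ruling out the far-side branch is the subtle point: in the intended application via the proof of Lemma~\ref{lem:DistEq}, $y = p(x)$ is a closest boundary point of $\Omega^M$ to $x$, so a far-side configuration would admit a strictly closer boundary point than $y$, contradicting the minimality and leaving only the near-side branch to analyse.
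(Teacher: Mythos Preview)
Your argument is correct and somewhat more direct than the paper's. Both rely on the law of cosines in the plane through $y$, $m_i$, $x$; the paper first replaces ${\rm dist}(x,y)$ by $d_i(x)=\max\{{\rm dist}(z,y):z\in\mathcal P,\ \delta_i(z)=\delta_i(x)\}$, applies the cosine rule at the extremal angle $\theta=\alpha$ to obtain $d_i(x)/\delta_i(x)=(2R_i-\delta_i(x))/(2R_i\cos\alpha-d_i(x))$, and then closes the estimate via $d_i(x)\le R_i\cos\alpha$, whereas you keep the actual angle $\theta$ of $x$, derive $\delta_i(x)=\rho(2R_i\cos\theta-\rho)/(R_i+\|x-m_i\|)$, and reduce directly to $2R_i\cos\theta-\rho\ge R_i\cos\alpha$ on the near-side branch. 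Your explicit near/far splitting is a genuine clarification: the annular region in the statement has two connected components and the inequality \emph{fails} on the far one (on the axis near $\rho=2R_i$ one has $\delta_i(x)/\rho\to 0$); the paper's step $d_i(x)\le R_i\cos\alpha$ tacitly imposes the same near-side restriction without flagging it. Your observation that the far branch is excluded in the application because $y=p(x)$ is a nearest boundary point is the correct mechanism, matching how the lemma is actually invoked in the proof of Lemma~\ref{lem:DistEq}.
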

\begin{proof}
%
We consider any point $x\in \mathcal P = K_y(v_i,\alpha)\cap \big( B(m_i;R_i) \setminus \overline{B(m_i;R_{i,\alpha})} \big)\rev{\cap \overline{B(y; R_i\cos(\alpha))}}$. 
Consider now all points in $\mathcal P$ lying on a subset $\Gamma(x)$ of the sphere with radius $R_i-\delta_i(x)$:
\[
	\Gamma(x) = \{ z\in \mathcal P \;|\; \delta_i(z) = \delta_i(x) \},
\]
and introduce $d_i(x) = \max_{z\in \Gamma(x)} {\rm dist}(z,y)$ such that 
$
	\delta_i(x) \le {\rm dist}(x,y) \le d_i(x),
$
and thus
$
	\frac{ {\rm dist}(x,y)}{\delta_i(x)} \le \frac{ d_i(x)}{\delta_i(x)}.
$
Applying the cosine rule yields
$
	(R_i-\delta_i(x))^2 = d_i(x)^2 + R_i^2 - 2 d_i(x) R_i \cos(\alpha),
$
which is equivalent to
\[
	\frac{d_i(x)}{ \delta_i(x)} = \frac{2R_i - \delta_i(x)}{2R_i\cos(\alpha) - d_i(x)}.
\]
%
Since $d_i(x)\le d_i=R_i\cos(\alpha)$, we have
\[
 \frac{\delta_i(x)}{{\rm dist}(x,y)} \geq \frac{\delta_i(x)}{d_i(x)} =  \frac{2R_i\cos(\alpha) - d_i(x)}{2R_i - \delta_i(x)} \geq \frac{R_i\cos(\alpha) }{2R_i - \delta_i(x)} \geq \frac{ \cos (\alpha)}{2},
\]
which completes the proof.
\end{proof}

\subsection{Derivation of result in Remark~\ref{remsmoothness}} 
\label{App2}
We consider the case of two overlapping balls , $\Omega^2= \Omega_1 \cap \Omega_2$, with $\Omega_1=B\big((-1,0,0);2\big)$, $\Omega_2=B\big((1,0,0);2\big)$ as in Remark~\ref{remsmoothness}. The intersection of $\partial \Omega^2$ with $\Omega_1 \cap \Omega_2$ is given by the circle $S=\left\{ (0,x_2,x_3)\; \middle| \; x_2^2+x_3^2=3\right\}$. We analyze the smoothness of $\theta_1$ (close to $S$). Elementary computation yields
\[
  \nabla \theta_1= \frac{\delta_2 \nabla \delta_1- \delta_1 \nabla \delta_2}{\delta^2}, \quad \nabla\theta_1 \cdot \nabla \theta_1= \frac{\delta_1^2 +\delta_2^2 - 2 \delta_1 \delta_2 \nabla\delta_1 \cdot \nabla\delta_2}{\delta^4}.
\]
On the subdomain $V:=\left\{ x \in \Omega_1 \cap \Omega_2\; \middle| \; |x_1| \leq \frac12,~1 \leq x_2^2+ x_3^2 \leq 3\right\}$ we have $|\nabla\delta_1 \cdot \nabla\delta_2| \leq \frac34$. Hence,
\[
  (\nabla\theta_1 \cdot \nabla \theta_1)_{|V} \geq \frac{\delta_1^2 +\delta_2^2 -  1\frac12 \delta_1 \delta_2}{\delta^4} \geq \frac14 \frac{\delta_1^2 +\delta_2^2}{\delta^4} \geq \frac18 \frac{1}{\delta^2}.
\]
Take $p$ on the intersection circle $S$ and define the triangle  $T_p$ with  the vertices $p$, $(-1,0,0)$, $(0,0,0)$. For all $x \in  T_p$ we have $\delta_2(x) \leq \delta_1(x)$, hence $\delta(x) \leq 2 \delta_1(x)$. Furthermore, there exists a constant $c$ such that $\delta_1(x) \leq c \|x-p\|$ for all $x \in T_p \cap V$. The spherical sector obtained by rotating $T_p$ along $p \in S$ can be parametrized by coordinates $(s,\rho,\theta)$, with $s \in [0,2 \sqrt{3} \pi]$ the arclength parameter on $S$ and $(\rho,\theta)$ polar coordinates in the triangle $T_p$  at $p=s$, with origin at $p$. Integration over a part $T_S:=\{\, (s,\rho,\theta)~|~\rho\leq \rho_0\,\}$ of this spherical sector, with $\rho_0 >0$ sufficiently small, yields
\[
  \int_{T_S} |\nabla \theta_1 |^2 \, dx \geq \frac{1}{16} \int_{ T_S} \delta_1^{-2} \, dx \geq  \tilde c \int_0^{2 \sqrt{3} \pi} \int_0^{\frac16 \pi} \int_{0}^{\rho_0} \frac{1}{\rho^2} \rho ~ d\rho \, d\theta \, ds = \infty.
\]

\bibliographystyle{siam}
\bibliography{literatur}{}

\end{document}